\documentclass[11pt,a4paper]{article}

% AMS-Symbole 
\usepackage{amssymb} 
\usepackage[intlimits]{amsmath}
\usepackage{amsfonts}
\usepackage{amsthm} 
\usepackage{mathrsfs} 

% Form und Stil
\usepackage{hyperref}
\hypersetup{colorlinks=true,linkcolor=RoyalPurple,citecolor=RoyalPurple}
\usepackage{cite}
\usepackage{graphicx}
\usepackage{geometry}
\usepackage{ulem}
\usepackage{color}
\usepackage[dvipsnames]{xcolor}

\addtolength{\oddsidemargin}{1mm}
\addtolength{\textwidth}{-1mm}
\setlength\parindent{0pt}

% New definition of square root:
% it renames \sqrt as \oldsqrt
\let\oldsqrt\sqrt
% it defines the new \sqrt in terms of the old one
\def\sqrt{\mathpalette\DHLhksqrt}
\def\DHLhksqrt#1#2{%
\setbox0=\hbox{$#1\oldsqrt{#2\,}$}\dimen0=\ht0
\advance\dimen0-0.2\ht0
\setbox2=\hbox{\vrule height\ht0 depth -\dimen0}%
{\box0\lower0.4pt\box2}}

\allowdisplaybreaks

\newcommand{\R}{\mathbb{R}} % reelle Zahlen
\newcommand{\N}{\mathbb{N}} % natuerliche Zahlen
 % ganze Zahlen
 % rationale Zahlen
 % komplexe Zahlen
 % beliebiger Koerper

 % interior ...
\newcommand{\dist}{\textnormal{dist}} % dist ...
 % diam ...
 % sign ...
 % supp ...
\newcommand{\essinf}{\textnormal{essinf}} % essinf ...
\newcommand{\esssup}{\textnormal{esssup}} % esssup ...
 % Trace ...
 % inrad ...

 % Domain
 % Duality pairing

\newcommand{\cE}{{\mathcal E}}

\newcommand{\cH}{{\mathcal H}}

\newcommand{\cL}{{\mathcal L}}

\newcommand{\cO}{{\mathcal O}}

\renewcommand{\phi}{\varphi}
\newcommand{\eps}{\varepsilon}

%%%%%%%%%%%%%%%%%%%%%%%%%%%%%
%% definition op: I or L
\newcommand{\op}{\mathscr{L}}
%%%%%%%%%%%%%%%%%%%%%%%%%%%%%
 % alpha/2 Laplace
\renewcommand{\div}{\textnormal{div}}
\newcommand{\loc}{\operatorname{loc}}

\usepackage[utf8]{inputenc}
\usepackage[capitalize]{cleveref}
\crefname{equation}{}{} %no "eqn." for counter of equations
\usepackage{enumitem}

%% theorem like environments
%% =========================
\newtheorem{thm}{Theorem}[section]

\newtheorem{prop}[thm]{Proposition}
\newtheorem{lem}[thm]{Lemma}
\newtheorem*{theorem*}{Theorem}

%% definition like environments
%% =========================
\theoremstyle{definition}
\newtheorem{definition}[thm]{Definition}
\newtheorem{remark}[thm]{Remark}
\newtheorem*{remark*}{Remark}
\newtheorem{example}{Example}

\parskip1ex

\title{Continuity of solutions to equations with weakly singular nonlocal operators}
\author{Sven Jarohs\footnote{Institut f\"ur Mathematik, Goethe-Universit\"{a}t, Frankfurt, Robert-Mayer-Stra\ss e 10, D-60629 Frankfurt, jarohs@math.uni-frankfurt.de.}, 
\
Moritz Kassmann\footnote{Universit\"{a}t Bielefeld, Fakult\"{a}t f\"{u}r Mathematik, Universit\"{a}tsstra{\ss}e 25, D-33615 Bielefeld, moritz.kassmann@uni-bielefeld.de},
\
and
\
Tobias Weth\footnote{Institut f\"ur Mathematik, Goethe-Universit\"{a}t, Frankfurt, Robert-Mayer-Stra\ss e 10, D-60629 Frankfurt, weth@math.uni-frankfurt.de.}
}
\date{\today}

\begin{document}
\maketitle

\begin{abstract}
We prove boundedness and regularity estimates for weak solutions to a class of linear nonlocal equations involving integro-differential operators with almost no order of differentiability. In particular, we show that bounded weak solutions are continuous, and we provide a uniform a-priori estimates for the modulus of continuity. In contrast to earlier works, we allow the nonlocal operators to be highly anisotropic and weakly singular, and we allow the associated kernel functions to vanish close to the singularity.
\end{abstract}

{\footnotesize
\begin{center}
\textit{Keywords.} Nonlocal Operators $\cdot$ Regularity theory $\cdot$ Growth lemma $\cdot$ Boundedness of solutions\\[2ex]

\textit{Mathematics Subject Classification:} %(2011)
Primary: 35B65; Secondary: 35R09, 47G20, 60J75.
\end{center}
}

%%%%%%%%%%%%%%%%%%%%%%%%%%%%%%%%%%%%%%%%%%%%%%%%%%%%%%%%%%%%%%%%%%%%%%
\setcounter{section}{0}
\section{Introduction}\label{sec:introduction}

The present paper is devoted to regularity properties of weak solutions to nonlocal equations of the type
\begin{equation}
\label{general-op-eq}
\op u= Wu + f \qquad \text{in $\Omega$}
\end{equation}
with measurable functions $W,f:\Omega\to\R$, which are driven by a nonlocal operators $\op $ of the form 
\begin{align}\label{def:op-new}
\op u(x)= \int_{\R^N} \big(u(x)-u(y) \big) K(x,y)\,dy \qquad (x\in \R^N) \,,
\end{align}
resp. its corresponding bilinear form
\begin{equation}\label{defi:bilinear}
(u,v)\mapsto \cE(u,v):=\iint_{\R^N\,\R^N}\big(u(x)-u(y)\big)v(x)K(x,y)\, dydx.
\end{equation}
Here, $\Omega \subset \R^N$ is an open set, and $K:\R^N \times \R^N \to[0,\infty]$ is a measurable function satisfying the translation invariant comparability condition
\begin{align} \label{eq:cond-K}\tag{K} 
\Lambda^{-1} j(y-x) \leq K(x,y) \leq \Lambda j(y-x)	\qquad (x,y \in \R^N,\, x\neq y).
\end{align}
for some $\Lambda \geq 1$ and some measurable function $j:\R^N\setminus \{0\} \to [0, \infty)$. Under this assumption, key properties of the bilinear form $\cE$ are governed by the function $j$, and therefore the regularity of weak solutions of \eqref{general-op-eq} is intimately related to the specific form of $j$. 
We also assume $W \in L^\infty(\Omega)$ in \eqref{general-op-eq}, and we stress that the special case $W \equiv \lambda$, $\lambda \in \R$, $f \equiv 0$ leads to the eigenvalue equation for the operator $\op$ in $\Omega$.
 
A challenging question, which received considerable attention in recent years, is to derive regularity estimates for solutions of \eqref{general-op-eq} depending on specific assumptions on the governing function $j$. It is worth noting that the regularity theory of \eqref{general-op-eq} is extremely well-developed in the case where $j(z)=|z|^{-N-2s}$ for some $s \in (0,1)$, in which $\cL$ coincides, up to a normalization constant, with the well-studied fractional Laplacian $(-\Delta)^s$, see e.g. \cite{AV19,G19} and the references therein.

The main purpose of the present paper is to provide boundedness and continuity estimates for solutions of \eqref{general-op-eq} while assuming as little as possible on the governing function $j$. We first introduce the following general assumption on $j$ with parameter $\gamma >0$:
\begin{equation}
  \int_{\R^N} \min\{1, |z|^\gamma\} \, j(z) dz < \infty. \tag{$A1_\gamma$} \label{eq:cond-A1}
\end{equation}
Condition \eqref{eq:cond-A1} with $\gamma=2$ is often called L\'{e}vy condition because the measure $j(z)\, dz$ then is known as a L\'{e}vy measure in probability theory. A consequence of  \eqref{eq:cond-A1} (for $\gamma\in(0,1)$) is that the expression $|\op u(x)|$ is finite for every measurable bounded function $u:\R^N \to \R$, which is Hölder-continuous of order greater than $\gamma$ in a neighborhood of the point $x \in \R^N$. In the present paper, however, we deal with the natural class of weak (or variational) solutions for which (\ref{general-op-eq}) does not have a pointwise meaning in general.

The notion of a weak solution arises very naturally in the case where the kernel function $K$ is symmetric. On the other hand, as noted in \cite{FKV15}, it can also be defined for nonsymmetric $K$ if its antisymmetric part is controlled by its symmetric part. More precisely, we consider, as in \cite{FKV15}, the symmetric and antisymmetric parts of $K$ given by 
$$
K_s(x,y):=\frac12 \big( K(x,y) + K(y,x) \big)\quad\text{and}\quad K_a(x,y) := \frac12 \big( K(x,y) - K(y,x) \big),
$$
for $x,y\in \R^N$, $x\neq y$, and we assume throughout the paper that
\begin{align} \label{eq:cond-K_as}\tag{K$_{as}$}
A(K):=\sup_{x\in \R^N} \int_{\{K_s(x,y)\neq 0\}} \frac{K_a^2(x,y)}{K_s(x,y)} \ dy<\infty.
\end{align}

\begin{remark}
\label{remark-condition-AS}
  (i) Condition (\ref{eq:cond-K_as}) is automatically satisfied if $K$ is symmetric, i.e., $K(x,y)=K(y,x)$ for all $x,y \in \R^N$, $x \not = y$. Moreover, we stress that all main results of the paper are already new in the case of a symmetric kernel function $K$.\\ 
(ii) We also remark that, if \eqref{eq:cond-K} and \eqref{eq:cond-A1} hold, then (\ref{eq:cond-K_as}) is already satisfied if 
$$
\sup_{x\in \R^N} \int_{\{K_s(x,y)\neq 0\}\cap B_\eps(x)} \frac{K_a^2(x,y)}{K_s(x,y)} \ dy<\infty \qquad \text{for some $\eps>0$,}
$$
since, for every $x \in \R^N$, the remaining part of the integral is bounded by 
$$
\|K_s(x,\cdot)\|_{L^1(\R^N \setminus B_{\eps}(x))} \le \Lambda \|j\|_{L^1(\R^N \setminus B_{\eps}(0))}<\infty
$$
Here, we used the obvious fact $K_a(x,y) \le K_s(x,y)$ for every $x,y \in \R^N$.\\ 
(iii) Condition (\ref{eq:cond-K_as}) is also satisfied automatically if $j \in L^1(\R^N)$, since  
$$
A(K) \le \sup_{x \in \R^N}\|K_s(x,\cdot)\|_{L^1(\R^N)}\le \Lambda \|j\|_{L^1(\R^N)}<  \infty
$$
in this case.
\end{remark}

\begin{definition}
  \label{def-class-solutions-intro}
(i) We let $V(\Omega|\R^N)$ denote the space of all functions $v \in L^2_{\loc}(\R^N)$ for which the seminorm $[v]_{V(\Omega|\R^N)}$ given by 
\begin{align*}
[v]^2_{V(\Omega|\R^N)} = \int \!\!\!\!\!\!\!\!\! \int_{\R^{2N}\setminus [\Omega^c]^2} \big(v(x) - v(y)\big)^2 j(x-y)dx dy 
\end{align*}
is finite.\\
(ii) We call $u \in V(\Omega|\R^N)$ a \textit{weak solution} of (\ref{general-op-eq}) in $\Omega$ if
\begin{equation}
  \label{eq:def-weak-sol-intro}
\cE(u,\phi)=
  \int_{\Omega}f \phi\,dx  \qquad \text{for all $\phi \in C^\infty_c(\Omega)$}
\end{equation}
with the associated bilinear form
\begin{equation}
  \label{eq:def-E-intro}
(v,w) \mapsto \cE(v,w):= \iint_{\R^{N}\R^{N}} \big(v(x)-v(y)\big) w(x) K(x,y) dy dx.
\end{equation}
\end{definition}
Here and in the following, we put $\Omega^c:= \R^N \setminus \Omega$, and we note that, similarly as in \cite[Proof of Lemma 2.4]{FKV15} the expression $\cE(u,\phi)$ is easily seen to be well-defined by (\ref{eq:def-E-intro}) in Lebesgue sense for $u \in V(\Omega|\R^N)$ and $\phi \in C^\infty_c(\Omega)$. We remark that the space $V(\Omega|\R^N)$ can be seen as a nonlocal counterpart of the space $H^1(\Omega)$. Moreover, we note that in the case of symmetric $K$ we can write 
$$ 
\cE(u,\phi):= \frac{1}{2} \iint_{\R^{N}\R^{N}} \big(u(x)-u(y)\big)\big(\phi(x)-\phi(y)\big)K(x,y) dy dx \quad \text{for $u \in V(\Omega |\R^N)$, $\phi \in C^\infty_c(\Omega)$.}
$$
Our first main result concerns the boundedness of weak solutions of (\ref{general-op-eq}).

\begin{thm}
  \label{new-boundedness-unified-version}
  Assume that \cref{eq:cond-K} holds with some measurable function $j: \R^N \setminus \{0\} \to [0,\infty)$ satisfying (\ref{eq:cond-A1}) for some $\gamma \le 2$, and suppose also that (\ref{eq:cond-K_as}) holds (if $j \not \in L^1(\R^N)$). Moreover, let $\Omega \subset \R^N$ be a bounded open set, and let $f,W \in L^\infty(\Omega)$. If
  \begin{equation}
    \label{eq:key-comparison-W-j}
  \Lambda \|W^+\|_{L^\infty(\Omega)} < \int_{\R^N}j(z)\,dz \le \infty  
  \end{equation}
  then every weak solution $u \in V(\Omega|\R^N)$ of (\ref{general-op-eq}) with $u \in L^\infty(\Omega^c)$
satisfies $u \in L^\infty(\Omega)$, and there exists a constant $C>0$ not depending on $f$ and $u$ with
  \begin{equation}
  \label{new-boundedness-unified-version-estimate}
\|u\|_{L^\infty(\Omega)} \le C\Bigl( \|f\|_{L^\infty(\Omega)} + \|u\|_{L^\infty(\Omega^c)} +\|u\|_{L^2(\Omega)} \Bigr).    
  \end{equation}
If, in addition, $W \le 0$ and the kernel $K$ is symmetric, the term $\|u\|_{L^2(\Omega)}$ can be dropped in (\ref{new-boundedness-unified-version-estimate}). 
\end{thm}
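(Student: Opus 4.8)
The plan is to combine a truncation (De Giorgi / Stampacchia) scheme with the observation that \eqref{eq:key-comparison-W-j} forces a genuine \emph{zeroth--order coercivity} of $\cE$, which has to replace the differentiability that is almost entirely absent here. Since $-u$ is again a weak solution of an equation of the form \eqref{general-op-eq} (with the same kernel $K$, hence with \eqref{eq:cond-K}, \eqref{eq:cond-A1}, \eqref{eq:cond-K_as}, \eqref{eq:key-comparison-W-j} all unchanged, and with $f,W$ replaced by $-f,W$), it suffices to bound $\esssup_\Omega u^+$ by the right--hand side of \eqref{new-boundedness-unified-version-estimate}. Put $k_0:=\max\{0,\|u\|_{L^\infty(\Omega^c)}\}$ and, for $k\ge k_0$, set $w_k:=(u-k)^+$ and $A_k:=\{x\in\Omega: u(x)>k\}$. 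From $(w_k(x)-w_k(y))^2\le(u(x)-u(y))^2$ one gets $w_k\in V(\Omega|\R^N)$, and since $w_k=0$ a.e.\ on $\Omega^c$ it is an admissible test function by approximation in $V(\Omega|\R^N)$ (as in \cite[Lemma~2.4]{FKV15}). Testing \eqref{general-op-eq} with $w_k$ and using the elementary truncation inequality $\bigl(u(x)-u(y)\bigr)w_k(x)\ge\bigl(w_k(x)-w_k(y)\bigr)w_k(x)$, together with $u=w_k+k\ge0$ on $A_k$, gives
\[
\cE(w_k,w_k)\ \le\ \cE(u,w_k)\ =\ \int_\Omega (Wu+f)\,w_k\,dx\ \le\ \|W^+\|_{L^\infty(\Omega)}\!\int_{A_k}\!w_k^2\,dx+\bigl(\|W^+\|_{L^\infty(\Omega)}k+\|f\|_{L^\infty(\Omega)}\bigr)\!\int_{A_k}\!w_k\,dx .
\]

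\textbf{Zeroth--order coercivity.} By \eqref{eq:cond-A1} the quantity $\int_{\{|z|>\delta\}}j(z)\,dz$ is finite for every $\delta>0$ and increases to $\int_{\R^N}j>\Lambda\|W^+\|_{L^\infty(\Omega)}$ as $\delta\downarrow0$; also $\sup_{x}\int_{\{|x-y|<\delta,\,K_s\neq0\}}K_a^2/K_s\,dy\to0$ as $\delta\downarrow0$ by \eqref{eq:cond-K_as}. Fix $\delta$ small enough that $J:=\int_{\{|z|>\delta\}}j(z)\,dz>\Lambda\|W^+\|_{L^\infty(\Omega)}$, that this near--diagonal antisymmetric defect is small, and — when $\int_{\R^N}j=\infty$ — that moreover $\Lambda^{-1}J$ exceeds any prescribed multiple of $1+A(K)+\|W^+\|_{L^\infty(\Omega)}$. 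Split $K=K^{\mathrm{near}}+K^{\mathrm{far}}$ with $K^{\mathrm{far}}(x,y):=K(x,y)\mathbf 1_{\{|x-y|>\delta\}}$. In $\cE(w_k,w_k)$ the symmetric part of the near piece is nonnegative; the antisymmetric part of $\cE(w_k,w_k)$ is absorbed, up to $c\,A(K)\|w_k\|_{L^2(\Omega)}^2$, into a fraction of $\cE^{\mathrm{far}}_s(w_k,w_k)$ via the Cauchy--Schwarz bound built on \eqref{eq:cond-K_as} (as in \cite{FKV15}); and, since $K^{\mathrm{far}}(x,\cdot)\in L^1(\R^N)$ with $\int_{\R^N}K^{\mathrm{far}}(x,y)\,dy\ge\Lambda^{-1}J$, expanding the square and using $\int_{A_k}K^{\mathrm{far}}(x,y)\,dy\le\Lambda\,\omega(|A_k|)$ — with $\omega$ the translation--uniform modulus of absolute continuity of the $L^1$--function $j\mathbf 1_{\{|z|>\delta\}}$ — yields $\cE^{\mathrm{far}}_s(w_k,w_k)\ge\bigl(\Lambda^{-1}J-\Lambda\,\omega(|A_k|)\bigr)\|w_k\|_{L^2(\Omega)}^2$. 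By Chebyshev $|A_k|\le k^{-2}\|u\|_{L^2(\Omega)}^2$, so once $k$ exceeds a threshold $k_1$ — a computable multiple of $\|f\|_{L^\infty(\Omega)}+\|u\|_{L^\infty(\Omega^c)}+\|u\|_{L^2(\Omega)}$ — these estimates combine with the tested inequality to a Caccioppoli--type bound
\[
\kappa\,\|w_k\|_{L^2(\Omega)}^2\ \le\ \bigl((1+k)\|W^+\|_{L^\infty(\Omega)}+\|f\|_{L^\infty(\Omega)}\bigr)\!\int_{A_k}\!w_k\,dx\ +\ C\|u\|_{L^2(\Omega)}^2 ,\qquad \kappa:=\tfrac12\bigl(\Lambda^{-1}J-\|W^+\|_{L^\infty(\Omega)}\bigr)>0 ,
\]
the last summand accounting for the $W^+$-- and antisymmetric contributions. (When $j\in L^1(\R^N)$ one may equivalently bypass the variational step: the weak equation then holds a.e.\ in $\Omega$ in the pointwise form $u(x)\bigl(\int K(x,\cdot)\,dy-W(x)\bigr)=\int u(y)K(x,y)\,dy+f(x)$, the coefficient is $\ge\Lambda^{-1}\|j\|_{L^1}-\|W^+\|_{L^\infty(\Omega)}>0$ by \eqref{eq:key-comparison-W-j}, and one bounds $\int u(y)K(x,y)\,dy$ by splitting $u$ at a height $\asymp\|u\|_{L^2(\Omega)}$ and invoking equi--integrability of $j$ on the small set where $u$ is large; taking $\esssup_\Omega$ and absorbing gives \eqref{new-boundedness-unified-version-estimate} directly.)

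\textbf{Conclusion of the iteration and the special case.} Feeding $\int_{A_k}w_k\le|A_k|^{1/2}\|w_k\|_{L^2(\Omega)}$ and Young's inequality into the Caccioppoli bound, then combining with $|A_\ell|(\ell-k)^2\le\|w_k\|_{L^2(\{u>\ell\})}^2$ and $\|w_\ell\|_{L^2(\Omega)}\le\|w_k\|_{L^2(\Omega)}$ for $\ell>k$, one runs the level iteration over $k\ge k_1$, using that $\omega(|A_k|)\to0$ (so the effective coercivity constant improves along the iteration) and that the superlevel sets shrink; this forces $w_k\equiv0$ for $k$ above a level that is a multiple of $\|f\|_{L^\infty(\Omega)}+\|u\|_{L^\infty(\Omega^c)}+\|u\|_{L^2(\Omega)}$, which gives $u\in L^\infty(\Omega)$ and \eqref{new-boundedness-unified-version-estimate} — the a priori possibility $\esssup_\Omega u^+=\infty$ being handled by first truncating the test functions ($w_k$ replaced by $\min\{w_k,h\}$) and letting $h\to\infty$ in the final $h$--independent estimate. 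If $W\le0$ and $K$ is symmetric, then $\|W^+\|_{L^\infty(\Omega)}=0$, $A(K)=0$, the extra $C\|u\|_{L^2(\Omega)}^2$ disappears, and the nonlocal average in the pointwise argument is controlled by averages of $f$ through the contractive structure of the equation, so the $\|u\|_{L^2(\Omega)}$--term may be dropped.

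\textbf{Main obstacle.} The delicate point is precisely the closure of the iteration: because \eqref{eq:cond-A1} permits $j$ to be as mild as an $L^1$--function and to be highly anisotropic (concentrated near a lower--dimensional cone), there is in general \emph{no} Sobolev inequality with a gain of integrability, so the classical De Giorgi gain is unavailable and must be substituted by the combination of the zeroth--order coercivity extracted from \eqref{eq:key-comparison-W-j} and the equi--integrability of the non--singular part of $j$ on the (Chebyshev--)small superlevel sets. Keeping the resulting bookkeeping of the competing $W^+$--, antisymmetric-- and far--tail contributions under control — and thereby identifying exactly when the $\|u\|_{L^2(\Omega)}$ term is genuinely needed — is where the work lies.
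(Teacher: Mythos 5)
Your strategy is genuinely different from the paper's and, as written, has several gaps that I do not see how to close.

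\textbf{What the paper actually does.} There is no De Giorgi / Stampacchia iteration. The paper tests with $w_t=(u-t)^+$ and shows $w_t\equiv 0$ for a \emph{single} explicit level $t$. The key mechanism is to expand
\[
\cE(u,w_t)=\int_{\Omega}w_t(x)\Bigl(u(x)\!\int_{\R^N}\!K(x,y)\,dy-\int_{\R^N}\!u(y)K(x,y)\,dy\Bigr)dx
\]
and then write $u(x)=w_t(x)+t$ on $\{u>t\}$: the first term then contributes $t\,w_t(x)\int K(x,y)\,dy\ge t\,\Lambda^{-1}\|j\|_{L^1}\,w_t(x)$, which for $t$ large dominates the $W^+$--, tail-- and far-field terms and yields $0\ge(\text{positive constant})\,\|w_t\|_{L^2}^2$. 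When $j\notin L^1$ the near-singular part is separated via $K_\tau=\tfrac12\,1_{\{j_s\le\tau\}}K$ and absorbed with the G{\aa}rding inequality (whose constant is dominated because $\|j_\tau\|_{L^1}\to\infty$); when $j\in L^1\setminus L^\infty$ the truncation $K_\tau=\min\{\tau,K\}$ restores $L^2$-integrability of the kernel. Your proposal, by contrast, goes through $\cE(w_k,w_k)$ and an iteration, which loses precisely the $t$-dependent term the paper exploits.

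\textbf{Gaps in your argument.}

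\emph{(1) Unjustified uniform vanishing.} You claim that $\sup_x\int_{\{|x-y|<\delta,\,K_s\neq0\}}K_a^2/K_s\,dy\to0$ as $\delta\downarrow0$ ``by \eqref{eq:cond-K_as}''. Condition \eqref{eq:cond-K_as} only asserts $A(K)<\infty$; for each fixed $x$ the near-diagonal contribution vanishes by dominated convergence, but there is no uniformity in $x$, so this assertion is unsupported. The paper does not need it: it uses the global G{\aa}rding inequality and then beats the resulting constant by making $\|j_\tau\|_{L^1}$ arbitrarily large, which requires $j\notin L^1$ — hence the paper's three-case structure.

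\emph{(2) The iteration cannot close.} Your Caccioppoli-type bound
$\kappa\|w_k\|_{L^2}^2\le(ak+b)\int_{A_k}w_k\,dx+C\|u\|_{L^2}^2$
cannot force $w_k\equiv0$ at a finite $k$: the $C\|u\|_{L^2}^2$ term persists, and even dropping it, combining $\int_{A_k}w_k\le|A_k|^{1/2}\|w_k\|_{L^2}$, Chebyshev, and $|A_\ell|(\ell-k)^2\le\|w_k\|_{L^2}^2$ gives a recursion that is only \emph{linear} in $\|w_k\|_{L^2}$; Stampacchia's lemma requires a superlinear exponent, which — as you yourself observe — is unavailable here (no Sobolev gain). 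The paper's one-shot argument avoids this entirely, because the favorable coefficient in $\cE(u,w_t)$ grows linearly in $t$.

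\emph{(3) The pointwise bypass for $j\in L^1$ has an integrability gap.} To make sense of $\int_\Omega u(y)K(x,y)\,dy$ with $u\in L^2(\Omega)$ you need $K(x,\cdot)\in L^2(\Omega)$, which fails when $j\in L^1\setminus L^2$. Splitting $u$ at a height does not fix this: the contribution over $\{|u|>T\}$ still needs $j$ in $L^2$. The paper handles this with the truncation $K_\tau=\min\{\tau,K\}\in L^\infty$ and treats $K^\tau=K-K_\tau$ separately.

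\emph{(4) The $W\le0$, symmetric case.} Your remark about ``contractive structure'' is not an argument. The paper proves the drop of $\|u\|_{L^2(\Omega)}$ by contradiction, normalizing $\|u_n\|_{L^\infty(\Omega)}=1$ and invoking the Poincar\'e inequality (\cref{lemma:poincare}) to force $\|u_n\|_{L^2(\Omega)}\to0$, which contradicts the already-proven estimate.

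In short, the key technical idea of the paper — extracting the zeroth-order coercivity from $\cE(u,w_t)$ \emph{before} estimating $\cE(w_t,w_t)$, together with the explicit kernel truncations $\min\{\tau,K\}$ (for $j\in L^1$) and $\tfrac12\,1_{\{j_s\le\tau\}}K$ (for $j\notin L^1$) — is missing from your proposal, and without it the iteration you set up does not terminate.
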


\begin{remark}
\label{remark-new-boundedness-unified-version}
\begin{itemize}[itemsep=0pt, topsep=2pt]
\item[(i)] Let us point out two special cases where condition \eqref{eq:key-comparison-W-j} holds. First, \eqref{eq:key-comparison-W-j} holds for a nonpositive $W \in L^\infty(\Omega)$ and an arbitrary nontrivial nonnegative function $j \in L^1(\R^N)$, and both \eqref{eq:cond-A1} and \eqref{eq:cond-K_as} are satisfied automatically in this case.
Second, condition \eqref{eq:key-comparison-W-j} holds for arbitrary $W \in L^\infty(\Omega)$ if $j \not \in L^1(\R^N)$, i.e., if
\begin{equation}
\int_{\R^N}j(z)\ dz =\infty. \tag{A2} \label{eq:cond-A2}
\end{equation}
\item[(ii)] It is clear that the term $\|u\|_{L^2(\Omega)}$ cannot be dropped in \eqref{new-boundedness-unified-version-estimate} in the case of general $W \in L^\infty(\Omega)$. Indeed, without the term $\|u\|_{L^2(\Omega)}$, the estimate \eqref{new-boundedness-unified-version-estimate} cannot hold for (the unbounded set of) eigenfunctions $u$ corresponding to some Dirichlet eigenvalue $\lambda>0$ of $\op$, which solve \eqref{general-op-eq} with $W \equiv \lambda$, $f \equiv 0$ and satisfy $u \equiv 0$ in $\Omega^c$.\\
\item[(iii)] The assumption $u \in L^\infty(\Omega^c)$ in \cref{new-boundedness-unified-version} can be weakened. More precisely, it is sufficient to assume that $u \in L^\infty(\Omega' \setminus \Omega)$ for some bounded open set $\Omega' \subset \R^N$ with $\Omega \Subset \Omega'$, and that
  \begin{equation}
    \label{eq:finite-j-tail}
T(u,\Omega,\Omega'):=  \sup_{x \in \Omega}\int_{\R^N \setminus \Omega'}|u(y)|j(y-x)\,dy < \infty.
  \end{equation}
  In this case, \eqref{new-boundedness-unified-version-estimate} needs to be replaced by
\begin{equation}
  \label{new-boundedness-main-theorem-estimate-general}
\|u\|_{L^\infty(\Omega)} \le C\Bigl( \|f\|_{L^\infty(\Omega)} + \|u\|_{L^\infty(\Omega' \setminus \Omega)} + \|u\|_{L^2(\Omega)} +T(u,\Omega,\Omega')\Bigr)    
  \end{equation}
  with $C>0$ depending on $\Omega'$. Again, the term $\|u\|_{L^2(\Omega)}$ can be dropped if the kernel $K$ is symmetric and $W$ is nonnegative. This generalization can be derived a posteriori from \cref{new-boundedness-unified-version}, see \cref{remark-new-section-boundedness} below.
	\end{itemize}
\end{remark}

Next we turn to uniform regularity estimates for solutions of \eqref{general-op-eq}. In this context, condition \eqref{eq:cond-A2} is a necessary minimal growth condition, since in the case $j \in L^1(\R^N)$ the operator $\op$ reduces to a sum of a multiplication and a convolution type integral operator, and therefore no gain of regularity is expected when passing from a bounded RHS $f$ in \eqref{general-op-eq} to an associated weak solution $u$.

Our second main theorem of this paper is concerned with the continuity of solutions of \eqref{general-op-eq}. For this, we need to assume, in addition, one of two alternative assumptions, which cover different classes of kernel functions $j$. Both assumptions are formulated for a given radius $R_0 > 0$. The first assumption is a locally scaling-invariant doubling property of $j$ on annular domains.
\begin{equation}\label{eq:cond-A-3-1} \tag{A$3_1$} 
\begin{split}
&\text{There exist } c_0 \in (0,1) \text{ and } \sigma \in (0,1) \text{ such that} \\  
& c_0 \leq \frac{j(z)}{j(y)} \leq c_0^{-1}  \quad \text{for every $z,y \in B_{R_0} \setminus \{0\}$ with  $1-\sigma \le \tfrac{|z|}{|y|} \le 1+\sigma$.}
\end{split} 
\end{equation}
The following alternative condition is very different in nature and requires $j$ to be of locally bounded variation. 
\begin{equation}\label{eq:cond-A-3-2} \tag{A$3_2$}
\begin{split}
&\text{There exists $M>0$ such that } j \in BV_{\loc}(B_{R_0} \setminus \{0\}) \text{ and } \\
&\|Dj\|(B_{R} \setminus \overline{B_r}) \le \frac{M}{r} \int_{B_r^c} j(z)\,dz \quad \text{for $0<r<R<R_0$.} 
\end{split}  
\end{equation}
Note that \eqref{eq:cond-A-3-1} is a quite natural assumption which, in particular, implies positivity of $j$ in $B_{R_0} \setminus \{0\}$. On the other hand, condition \eqref{eq:cond-A-3-2} seems to be completely new. In particular, it allows $j$ to vanish in sets arbitrarily close to zero. We refer to  the comments after \cref{thm:continuity} below and to \cref{sec:examples} for several examples satisfying the aforementioned conditions.

\begin{thm}
  \label{thm:continuity}
  Suppose that \cref{eq:cond-K}, \cref{eq:cond-K_as}, \cref{eq:cond-A1} with $\gamma\in(0,1)$, and \cref{eq:cond-A2} hold for a nonnegative function $j \in L^1_{loc}(\R^N \setminus \{0\})$ and assume further that \eqref{eq:cond-A-3-1} \underline{or} \eqref{eq:cond-A-3-2} is satisfied for some $R_0>0$. 
  Moreover, let $\Omega \subset \R^N$ be open and $f, W \in L^\infty_{\loc}(\Omega)$. Then every weak solution $u \in V(\Omega|\R^N) \cap L^{\infty}(\R^N)$ of (\ref{general-op-eq}) has a continuous representative.

  Moreover, for open subsets $A \Subset B \Subset \Omega$, there exists a nondecreasing continuous function $\omega : [0, \infty) \to [0, \infty)$ independent of $f$ and $u$ with $\omega(0)=0$ and the property that 
\begin{align}
  \label{new-continuity-main-theorem-estimate}
		|u(x) - u(y)| \leq \omega(|x-y|)\Bigl(\|u\|_{L^{\infty}(\R^N)}+ \|f\|_{L^\infty(B)}\Bigr) \qquad \text{for all $x,y \in A$.}
\end{align} 
\end{thm}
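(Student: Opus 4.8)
The plan is to derive the continuity estimate \eqref{new-continuity-main-theorem-estimate} from an oscillation decay estimate obtained via a growth lemma (De Giorgi--Nash--Moser--type iteration adapted to the nonlocal setting), and then to recover the existence of a continuous representative as a consequence of that quantitative estimate. First I would reduce to a local statement: fix $x_0 \in A$ and a chain of concentric balls $B_{r}(x_0) \subset B \Subset \Omega$, and set $\omega(r; u) := \osc_{B_r(x_0)} u$. The heart of the matter is to prove a \emph{geometric decay of oscillation}: there are $\theta \in (0,1)$ and $\tau \in (0,1)$, depending only on $N$, $\Lambda$, $\gamma$, the constants in \eqref{eq:cond-A-3-1} or \eqref{eq:cond-A-3-2}, and on $R_0$, such that for all sufficiently small $r$,
\[
\osc_{B_{\tau r}(x_0)} u \;\le\; \theta\, \osc_{B_r(x_0)} u \;+\; C r^{\alpha}\bigl(\|u\|_{L^\infty(\R^N)} + \|f\|_{L^\infty(B)}\bigr)
\]
for some $\alpha>0$ coming from \eqref{eq:cond-A1} (the exponent $\gamma$ enters here). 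Iterating this inequality over the scales $\tau^k r$ gives, by the standard lemma on recursive sequences, a modulus of continuity $\omega$ with $\omega(0)=0$, nondecreasing and continuous, and independent of $u,f$; this yields \eqref{new-continuity-main-theorem-estimate} on $A$ after covering $A$ by finitely many such balls and using that any two nearby points lie in a common small ball. That an $L^\infty$ function satisfying a uniform modulus-of-continuity bound on $A$ has a continuous representative on $A$ is then immediate, and letting $A\Subset B\Subset\Omega$ exhaust $\Omega$ gives a continuous representative on all of $\Omega$.

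The decay of oscillation is obtained from a growth lemma (this is where the bulk of the work sits and which I expect to be carried out in the body of the paper). The mechanism is the following: if $u \le 1$ in $B_{2r}(x_0)$ and $u$ is "more than half the time below $0$" in $B_r(x_0)$ in a measure-theoretic sense, then $\sup_{B_{r/2}(x_0)} u \le 1 - \delta$ for some $\delta>0$; the symmetric statement holds from below, and combining the two on $B_r$ forces a fixed fractional drop in oscillation when passing to $B_{\tau r}$. The growth lemma itself is proved by testing the weak formulation \eqref{eq:def-weak-sol-intro} with truncations $\phi = \eta^2 (u-k)^+$ (times a cutoff $\eta$ supported in $B_{2r}(x_0)$), using the antisymmetry-control condition \eqref{eq:cond-K_as} to handle the nonsymmetric part of $K$ exactly as in \cite{FKV15}, absorbing the lower-order terms $Wu + f$ using \eqref{eq:cond-A2} (which guarantees the Dirichlet form dominates the zeroth-order term at small scales — this is precisely why \eqref{eq:cond-A2} is needed for regularity and not just \eqref{eq:cond-A1}), and then running a De Giorgi iteration on level sets. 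The energy/Sobolev-type inequality driving the iteration must be scale-adapted to $j$ rather than to $|z|^{-N-2s}$; here one uses a nonlocal Poincaré or "Sobolev with gain" inequality on $B_r$ for the form with kernel $j$, whose local behaviour near the origin is controlled — this is exactly the role of \eqref{eq:cond-A-3-1} (scaling-invariant doubling on annuli, giving enough mass at every dyadic scale) or, alternatively, of \eqref{eq:cond-A-3-2} (the $BV$ estimate on $j$, which yields a comparable control of the truncated form even when $j$ vanishes on small sets near $0$).

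The tail of the nonlocal operator must be tracked throughout: testing against a function supported in $B_{2r}(x_0)$ produces a contribution $\int_{\R^N\setminus B_{2r}(x_0)} (u(x)-u(y)) \phi(x) K(x,y)\,dy\,dx$, which is estimated by $\|u\|_{L^\infty(\R^N)}$ times $\sup_{x\in B_r(x_0)}\int_{|x-y|>r} j(x-y)\,dy$; this last quantity is finite by \eqref{eq:cond-A1} and contributes a term proportional to $\|u\|_{L^\infty(\R^N)}$, which is why that norm (rather than $\|u\|_{L^\infty(\Omega^c)}$ plus a tail) appears on the right-hand side of \eqref{new-continuity-main-theorem-estimate}. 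The main obstacle, as flagged, is establishing the scale-invariant energy inequality and the growth lemma under the very weak hypotheses \eqref{eq:cond-A1}--\eqref{eq:cond-A2} together with only \eqref{eq:cond-A-3-1} or \eqref{eq:cond-A-3-2}: one has essentially "no order of differentiability", so the usual compact-embedding arguments and fractional Sobolev inequalities are unavailable, and the iteration has to be closed purely by a quantitative gain-of-integrability/measure-shrinking argument whose constants are uniform in the scale $r$. Handling the two alternative structural conditions \eqref{eq:cond-A-3-1} and \eqref{eq:cond-A-3-2} in a unified way — in particular accommodating kernels $j$ that vanish near the origin in the second case — is the other delicate point.
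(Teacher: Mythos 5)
Your overall strategy---derive a growth lemma, turn it into oscillation decay, and feed that into a modulus-of-continuity argument---is the right skeleton, and your tail and $K_{as}$ comments are broadly in the spirit of the paper. But there are two concrete points where your proposal departs from what is actually achievable under the stated hypotheses, and both are central.

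\textbf{The claimed geometric decay with a fixed dilation factor cannot hold here.} You posit $\osc_{B_{\tau r}} u \le \theta\,\osc_{B_r} u + C r^\alpha(\cdots)$ for a \emph{fixed} $\tau\in(0,1)$ and iterate along $\tau^k r$ to produce $\omega$. Under the paper's weak assumptions (in particular under \eqref{eq:cond-A-3-2}, where $j$ may vanish on sets arbitrarily close to $0$), there is no uniform annulus comparison that would make a fixed ratio $r/R=\tau$ work at all scales, and there is certainly no exponent $\alpha$ producing a power-type remainder. The paper's growth lemma (\cref{thm:growth-lemma}) instead says: for each given $R$ there \emph{exists} $r\in(0,R)$, selected through a nontrivial scale-selection device (\cref{prop:key-estimate}), at which the growth estimate holds. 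The resulting sequence of radii $r_0>r_1>\cdots$ used in the oscillation iteration has no prescribed geometric structure, and the final modulus $\omega$ is a qualitative, non-explicit, piecewise-constant-then-interpolated function built from quantities $\tilde g_n \to 0$; one does not obtain H\"older continuity. Your ``standard recursive-sequences lemma'' step assumes a structure that is simply not there.

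\textbf{The proposed De Giorgi/level-set iteration is not the mechanism the paper uses, and as you yourself note, the ingredients it needs are missing.} The paper's growth lemma is proved by a barrier-comparison argument in the spirit of Silvestre \cite{Sil06} and \cite{KaMi17}: one perturbs $v$ by a smooth bump $\tfrac1a b_{\vartheta r}$, tests the weak formulation against $u^+ 1_{B_{\vartheta r}}$, and uses the G{\aa}rding inequality (\cref{lemma:garding}) together with a pointwise estimate $|\op b_r|\lesssim L(r)$ and the special sequence of radii from \cref{prop:key-estimate} to reach a contradiction. There is no Caccioppoli-plus-Sobolev-plus-measure-shrinking step. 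You acknowledge that ``the usual compact-embedding arguments and fractional Sobolev inequalities are unavailable'' but then wave at closing the De Giorgi iteration by a ``quantitative gain-of-integrability/measure-shrinking argument whose constants are uniform in $r$''---that is exactly the thing one cannot produce here, and the paper deliberately avoids needing it. Similarly, your description of the growth-lemma hypothesis uses a Lebesgue-measure ``half the time'' condition on a ball, whereas the paper's hypothesis \eqref{eq:key-estimate} is a $\mu_j$-measure condition on an \emph{annulus} $B_R\setminus B_r$, which is essential because $j$ can degenerate.

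In short: the high-level plan (growth lemma $\Rightarrow$ oscillation decay $\Rightarrow$ modulus of continuity) matches, but both the form of the oscillation decay (fixed $\tau$, H\"older remainder) and the proposed proof of the growth lemma (De Giorgi iteration via Sobolev-type inequalities) are incompatible with the paper's assumptions and do not match the actual argument, which relies on a barrier method, a G{\aa}rding inequality, and a careful scale-selection via \cref{prop:key-estimate} that yields only a qualitative modulus of continuity.
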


\begin{remark}
  \label{new-boundedness-main-theorem-remark2}
  \begin{itemize}[itemsep=0pt, topsep=2pt]
  \item[(i)] Instead of assuming $u \in L^\infty(\R^N)$, it is sufficient to assume in \cref{thm:continuity} that $u \in L^\infty_{loc}(\Omega)$ and that the function
    $$
    x \mapsto \int_{\R^N \setminus B_r(x)}|u(y)|j(y-x)\,dy 
    $$
    is locally bounded in $\Omega$ for every $r>0$. Then \eqref{new-continuity-main-theorem-estimate} needs to be replaced by 
\begin{align}
  \label{new-continuity-main-theorem-estimate-1}
		|u(x) - u(y)| \leq \omega(|x-y|) \Bigl(\|u\|_{L^{\infty}(B)}+ \|f\|_{L^\infty(B)}+T(u,A,B)\Bigr) 
\end{align}
for all $x,y \in A$, where $T(u,A,B)$ is defined as in \eqref{eq:finite-j-tail}, see \cref{bounded j tail defi} below. In view of Theorem~\ref{new-boundedness-unified-version} (and \cref{remark-new-boundedness-unified-version}(iii)), we can also drop the boundedness assumption on $u$ in $\Omega$ completely if we assume additionally that $f, W \in L^\infty(\Omega)$. More precisely, we may replace assumption $u \in L^\infty(\R^N)$ by $u \in L^\infty(\Omega^c)$ (or $u\in L^{\infty}(\Omega'\setminus \Omega)$ for some $\Omega'\Subset \R^N$ with $\Omega\Subset \Omega'$) in this case.
\item[(ii)] By a simple localization argument, the conclusions of \cref{thm:continuity} also hold if the assumption $u \in V(\Omega|\R^N)$ is replaced by assuming only that $u \in V(\Omega'|\R^N)$ for all open subsets $\Omega' \Subset \Omega$.
\end{itemize}
\end{remark}

As remarked earlier, \cref{thm:continuity} is formulated with the aim to assume as little as possible on the kernel $K(x,y)$ resp. the density $j$ in addition to \cref{eq:cond-A1}, \cref{eq:cond-A2}, and \cref{eq:cond-K_as}. In particular, we wish to point out the following:

\begin{enumerate}[itemsep=0pt, topsep=2pt]
\item[(i)] Our assumptions are weaker than in existing regularity results for weakly singular nonlocal operators. In particular, this applies to \cite{KaMi17} and the references therein.  
\item[(ii)] We are able to treat operators with anisotropic kernels. The new condition \eqref{eq:cond-A-3-2} allows a remarkable degree of freedom in this regard, see Section~\ref{sec:examples} below for a detailed discussion and examples. 
\item[(iii)] \cref{thm:continuity} applies to weak (or variational) solutions, and its proof is based on combining ideas of \cite{Sil06} and \cite{KaMi17} with an energy approach based on nonlocal bilinear forms.
\end{enumerate}
The new features mentioned in (i) and (ii) are visible in particular for density functions of the form
\begin{equation}
  \label{eq:j-ex-intro}
z \mapsto j(z) = \ell(|z|)|z|^{-N},
\end{equation}
where $\ell \in L^\infty_{\loc}(0,\infty)$ is a nonnegative function satisfying for some $\gamma\in(0,1)$
		\begin{align}\label{eq:assumption-l1}
		\int_0^{\infty}\min \{1,s^{\gamma}\}\frac{\ell(s)}{s}\ ds <\infty\quad \text{and}\quad \int_0^{1}\frac{\ell(s)}{s}\ ds=\infty.
		\end{align}
		It follows readily from (\ref{eq:assumption-l1}) that $j$ satisfies \eqref{eq:cond-A1}, \eqref{eq:cond-A2}. 
Moreover, \eqref{eq:cond-A-3-1} is ensured by the simple comparability condition 			\begin{align}
			\label{eq:local-radial-positivity}
			c_1  \le \frac{\ell(r)}{\ell(R)} \le c_2 \quad \text{for $r,R\in(0,R_0)$ with $1-\sigma<\frac{r}{R}< 1+\sigma$,}
			\end{align}
                        for some $R_0>0$. This condition includes regularly varying functions at zero, which were considered in \cite{KaMi17} in this context,  but it is much more general. In particular, it is satisfied if $0 < \inf_{(0,R_0)} \ell \le \sup_{(0,R_0)} \ell < \infty$ for some $R_0 >0$.

                        In Section~\ref{sec:examples} below, we will also consider $j$ given by (\ref{eq:j-ex-intro}) with a nonnegative function $\ell$ on $(0,\infty)$ with infinitely many zeros close to zero. In this case \eqref{eq:cond-A-3-1} cannot be satisfied, but there are examples in this class satisfying \eqref{eq:cond-A-3-2}. However, the main motivation for introducing \eqref{eq:cond-A-3-2} is given by (nonradial) anisotropic density functions $j$. A typical example is given by the class of density functions
\begin{equation}
  \label{eq:j-ex-intro-1}
z \mapsto j(z) = 1_A(\frac{z}{|z|})|z|^{-N}
\end{equation}
where $A$ is a subdomain of the unit sphere $S^{N-1}$ of class $C^1$. Note that $j(z)$ then vanishes outside of the cone $C_A:= \{r \theta\::\: r >0,\theta \in A\}$. This example and the more general class of density functions $z \mapsto j(z) = 1_A(\frac{z}{|z|})\ell(|z|)|z|^{-N}$ are discussed in detail in Section~\ref{sec:examples} below.

Before we discuss previous results in the direction of \cref{thm:continuity}, let us comment on different notions of symmetry related to our setting, i.e., with regard to the functions $K(x,y)$ and $j(h)$ from \eqref{eq:cond-K}. The operator $\op$ as in \eqref{def:op-new} is a symmetric operator in $L^2(\R^N)$ if $K(x,y)=K(y,x)$ for all $x \ne y$. In the translation-invariant case $K(x,y)=j(y-x)$ this condition reduces to $j(h) = j(-h)$ for all $h \ne 0$. Our approach works without these assumptions. We only require that the asymmetric part $K_a$ to be dominated by the symmetric part $K_s$ in a certain sense, see the examples in \cref{sec:examples}.  

There is another kind of symmetry that is helpful when studying nonlocal operators of the form of \eqref{def:op-new}. The condition $K(x,x+h)=K(x,x-h)$ for all $x$ and $h \ne 0$ is typically employed when working with techniques stemming from the theory of second-order differential operators in non-divergence form. As before, in the translation-invariant case this condition reduces to $j(h) = j(-h)$ for all $h \ne 0$. If we assume \eqref{eq:cond-A1} with $\gamma> 1$, then $-\op \phi$ is bounded for a smooth bounded function $\phi$ under the additional assumption (\!\cite{JaWe19})
\begin{equation}\label{above-higher}
	\sup_{x\in \R^N}\int_{\R^N}\min(1, |h|)\big|K(x,x+h)-K(x,x-h)\big|\ dh < \infty \,.
\end{equation}
This extra condition would allow us to extend the proof of \cref{thm:continuity} to the case $\gamma > 1$ using only slight modifications. 

Let us comment on some results related to \cref{thm:continuity}. 

In the case $K(x,x+h)=K(x,x-h)$ with $j(h) = |h|^{-d-2s}$ for $0 < s < 1$, \cref{thm:continuity} has been established first in \cite{BaLe02} using methods from Potential Theory and Stochastic Analysis. The authors makes use of the fact that $\cL$ is the generator of a Markov jump process. This approach has been used in order to cover more general, also anisotropic, cases, see \cite{SoVo04, BaKa05a, BaKa05b, KaMi13}. The article \cite{Sil06} introduces PDE techniques for results like \cref{thm:continuity} allowing for a broad range of linear and nonlinear integro-differential equations. We make use of the method developed in \cite{Sil06}. \cite{KaMi17} has extended the techniques from \cite{BaLe02, Sil06} to operators with weakly singular kernels $j$ including $j(h) = 1_{B_1} (h) |h|^{-d}$. In the current work, our assumptions are weaker than those in \cite{KaMi17}. The approaches of \cite{BaLe02, Sil06, KaMi17} are not robust w.r.t. the order of differentiability, i.e., the regularity estimate degenerate for $s \to 1$. Robust techniques using localized ABP-estimates have been developed for linear and nonlinear versions of the operator $-\op$ in \cite{CaSi09} and extend in \cite{ChDa12} to the non-symmetric case. See \cite{ScSi16} for references to works that employ ideas stemming from \cite{CaSi09} and that allow for non-symmetric cases, i.e., not assuming $K(x,x+h)=K(x,x-h)$ for all $h \ne 0$. 

Variational calculus can be applied successfully to operators $\op$ as in \eqref{def:op-new}. In this context, symmetry is given when assuming $K(x,y)=K(y,x)$ for all $x \ne y$. See \cite{FKV15, FoKa22} for the general setting. Questions of well-posedness in the case of jump kernels with very weak singularities can be found in \cite{CodP18}. In this short review, let us concentrate on Hölder regularity results for nonlocal operators with merely measurable kernels $K(x,y)$. 

The De Giorgi-Nash-Moser theory has been extended in \cite{Kas09} to equations of the form (\ref{general-op-eq}) with $\op$ as in \eqref{def:op-new} under the assumption $K(x,y)=K(y,x)$ with $j(h) = |h|^{-d-2s}$ for $0 < s < 1$.  \cite{DyKa20}, \cite{DKP14}, \cite{DKP16}, and \cite{Coz17} have proved H\"older regularity estimates and Harnack inequalities for weak solutions by adopting the Moser iteration and the De Giorgi iteration to the nonlocal setting. There are numerous extensions to time-dependent problems, operators with singular jumping measures or to equations on metric measure spaces that we do not mention here. The general non-symmetric case has recently been studied in \cite{KaWe22a}. Under conditions similar to \eqref{eq:cond-K_as} and $j(h) = |h|^{-d-2s}$, the authors provide Hölder-regularity of weak solutions. 

Finally, let us mention the two articles \cite{CS22, DRSV22}, which do not focus on regularity in the case of bounded measurable coefficients or jumping kernels. However, both works contain interesting results in the non-symmetric case. \cite{CS22} focuses on existence of classical solutions for operators with kernels of the form $K(x,y)=\frac{k(x-y)}{|x-y|^N}$. The authors assume $k$ to be a differentiable function on $\R^N\setminus\{0\}$, possibly non-symmetric, satisfying $\Lambda^{-1}\leq k\leq \Lambda$ for some $\Lambda\geq 1$. \cite{DRSV22} establishes interior and global regularity in the translation-invariant non-symmetric case, i.e., under the assumption $K(x,x+h)=j(h)$ for some function $j \in L^1_{\loc}(\R^N\setminus\{0\})$, which is positively homogeneous of degree $-N-2s$ and does not have to be even. The authors consider rather general 
nonsymmetric stable operators and prove interior and boundary regularity in H\"older spaces depending on the exponent $s\in (0,1)$.

The article is organized as follows. In \cref{sec:preliminaries} we discuss the assumptions and provide examples of densities $j$. In this section we also provide the definitions of the function spaces we use and collect general properties of these spaces. In \cref{sec:boundedness} we present a boundedness statement, \cref{new-boundedness-unified-version-section}, which immediately implies \cref{new-boundedness-unified-version} and \cref{sec:growth-lemma} contains the growth lemma, \cref{thm:growth-lemma}, which is our main auxiliary result. \cref{thm:continuity} is a direct consequence of \cref{thm:continuity:section}, which we prove in \cref{sec:continuity}.

\paragraph{Acknowledgment.} Moritz Kassmann gratefully acknowledges financial support by the German Research Foundation (SFB 1283 - 317210226). 

\section{Preliminaries and Examples}\label{sec:preliminaries}

We write $B_r$ instead of $B_r(0)=\{x \ \in \R^N \,: \, 0 \leq |x| < r\}$ and $S_r:= \partial B_r$ for $r>0$.

\subsection{Examples}\label{sec:examples}

Let us discuss the conditions \eqref{eq:cond-A-3-1} and \eqref{eq:cond-A-3-2}. As already mentioned in the introduction, condition \eqref{eq:cond-A-3-1} is weaker than the ones assumed in \cite{KaMi17} (see conditions ($\ell_1$)--($\ell_3$) there), but it does not allow for kernels $j$ which vanish on subsets arbitrarily close to the origin. In order to deal with kernels of this type, condition \eqref{eq:cond-A-3-2} can be used, see \cref{examples:radial} below. We also note that none of the two conditions, \eqref{eq:cond-A-3-1} and \eqref{eq:cond-K_as}, implies the other one, even not in the translation-invariant case in one space dimension. 
The kernel $K(x,y)=j(y-x)$ with 
\begin{align*}
j(h) = h^{-1-1/2} \text{ for } h>0 \text{ and } j(h) = 2 (-h)^{-1-1/2} \text{ for } h<0 
\end{align*} 
satisfies \eqref{eq:cond-A-3-1} but not \eqref{eq:cond-K_as}. The choice
\begin{align*}
	j(h) = |h|^{-1-\frac{1}{6}(\cos(h^{-1})+4)}+h|h|^{-2-\frac16}1_{\{|h|\leq1\}}(h)
\end{align*}
provides an example satisfying \eqref{eq:cond-K_as} but not \eqref{eq:cond-A-3-1}. Both examples satisfy \eqref{eq:cond-A1}, \cref{eq:cond-A2}, and \eqref{eq:cond-A-3-2}.

Regarding  \eqref{eq:cond-A-3-2}, let us recall that, if $\Omega \subset \R^N$, $BV_{\loc}(\Omega)$ is defined as the space of all functions $g \in L^1_{\loc}(\Omega)$ such that the total variation 
$$
\|Dg\|(\Omega') = \sup \Bigl\{ \int_{\Omega'}\,g \,\div\, w\,dx \::\: w \in C^1_c(\Omega',\R^N),\,\|w\|_{L^\infty(\Omega')} \le 1 \Bigr\}
$$
is finite for all open subsets $\Omega' \Subset \Omega$. The relevance of the space $BV_{\loc}(\Omega)$ in the present context arises from the estimate 
\begin{align}
\label{eq:bv-loc-key-property}
\int_{K}|g(z+x)-g(z)|\,dz \le \eps \|Dg\|(\Omega) 
\end{align}
which holds for $g \in BV_{\loc}(\Omega)$, Borel subsets $K \Subset \Omega$, $0<\eps < \dist(K,\partial \Omega)$, and $x \in B_\eps$, see e.g. \cite[Remark 3.25, p.134]{AFP00}. 
The estimate \eqref{eq:bv-loc-key-property} will be used at a key point in the present paper. 

Next, let us provide several examples. 

\begin{example}\label{examples:radial} {\it (Radial kernels)} We first consider kernels of the form 
		\begin{align}
		\label{eq:kernel-radial}
		z \mapsto j(z) = \ell(|z|)|z|^{-N},
		\end{align}
		where $\ell \in L^\infty_{\loc}(0,\infty)$ is a nonnegative function satisfying (\ref{eq:assumption-l1}) for some $\gamma\in(0,1)$. As mentioned already in the introduction, it then follows readily that $j$ satisfies \eqref{eq:cond-A1}, \eqref{eq:cond-A2}.

                Moreover, condition \eqref{eq:cond-A-3-1} is ensured by assuming (\ref{eq:local-radial-positivity}) for some $R_0>0$.
                
As noted before, \eqref{eq:cond-A-3-1} requires, in particular, local positivity of the kernel close to zero. To give an example of an admissible radial kernel of the form \eqref{eq:kernel-radial} with zeros arbitrarily close to $0$, we need to consider condition \eqref{eq:cond-A-3-2} instead. It is easy to see that \eqref{eq:cond-A-3-2} is satisfied if, for some constants $R_0, \widetilde{M}>0$, the function $\ell$ is locally absolutely continuous on $(0,R_0]$ and satisfies
			\begin{align}
			\label{eq:A4-2-radial}
			\int_r^{R_0} \frac{|\ell'(\tau)|}{\tau}\,d\tau \le \frac{\widetilde{M}}{r}\int_r^\infty \frac{|\ell(\tau)|}{\tau}d\tau \qquad \text{for $0<r< R_0$.}
			\end{align}
			An example of a function $\ell$ satisfying \eqref{eq:A4-2-radial} for $R_0>0$ sufficiently small and $\widetilde{M}>0$ sufficiently large is given by 
			$$
			r \mapsto \ell(r)=  \left \{
			\begin{aligned}
			&1-\sin (\ln r), &&\qquad r \le 1\\ 
			&0, &&\qquad r > 1\\ 
			\end{aligned}
			\right.
			$$
			
			Moreover, the kernel $j$ defined by \eqref{eq:kernel-radial} also satisfies the conditions \eqref{eq:cond-A1}, \eqref{eq:cond-A2} in this case.
\end{example}

Next we provide examples with non-radial functions $j$.

\begin{example}\label{examples:nonradial} {\it (Non-radial kernels)} We consider kernels $j$ of the form 
		\begin{align}
		\label{eq:kernel-nonradial}
		z \mapsto j(z) = \ell(|z|)\lambda(\frac{z}{|z|})|z|^{-N},
		\end{align}
		where $\ell \in L^\infty_{\loc}(0,\infty)$ and $\lambda \in L^{\infty}(S^{N-1})$ are nonnegative functions.
		\begin{itemize}
                \item[1.] If $\inf \limits_{S^{N-1}}\lambda >0$, and if $\ell$ satisfies \eqref{eq:assumption-l1} for some $\gamma \in (0,1)$ and \eqref{eq:local-radial-positivity} for some $R_0,c_1,c_2>0$, then $j$ satisfies \eqref{eq:cond-A1}, \eqref{eq:cond-A2}, and \eqref{eq:cond-A-3-1}. However, in this case we can replace $j$ also by a radial function satisfying these conditions and
                  (\ref{eq:cond-K}).
			\item[2.] Using condition \eqref{eq:cond-A-3-2} in place of \eqref{eq:cond-A-3-1}, we may also consider highly anisotropic kernels of the form \eqref{eq:kernel-nonradial} where $\lambda$ may have a large zero set. Specifically, let us consider the case $\lambda= 1_A$, where $A \subset S^{N-1}$ is a subdomain of class $C^1$ with $A = -A$ and $\cH_{N-1}(A)>0$. Moreover, suppose that $\ell:(0,\infty) \to \R$ is a nonnegative function satisfying \eqref{eq:assumption-l1} for some $\gamma \in (0,1)$ and which, for some constants $R_0, \widetilde{M}>0$, is locally absolutely continuous on $(0,R_0]$ and satisfies \eqref{eq:A4-2-radial}. We then consider the kernel $z \mapsto j(z)=|z|^{-N}\ell(|z|)1_A(\frac{z}{|z|})$ which vanishes outside of the cone
			$$
			C_A := \{z \in \R^N \setminus \{0\}\::\: \frac{z}{|z|} \in A\}.
			$$ 
			Then $j$ satisfies \eqref{eq:cond-A1}, \eqref{eq:cond-A2} and we claim that it also satisfies \eqref{eq:cond-A-3-2} in this case. In order to estimate the total variation $\|Dj\|(B_{R} \setminus \overline B_r)$ for $0<r<R<R_0$, we consider $w \in C^1_c(B_{R} \setminus \overline B_r, \R^N)$ with $\|w\|_{L^\infty} \le 1$, and we consider the sets 
			$$
			K_r^R:= [B_{R} \setminus \overline B_r] \cap C_A,\qquad  \Gamma_r^R:= \{z \in B_{R} \setminus \overline B_r \::\: \frac{z}{|z|} \in \partial A\} \subset \partial K_r^R.
			$$
			Since $w$ vanishes on $\partial K_r^R \setminus \Gamma_r^R \subset S_r \cup S_R$, we then have with the exterior normal field $\eta$ of $K_r^R$ that 
			$$
			\int_{B_{R} \setminus B_r} j \,\div\, w\,dx=\int_{K_r^R} j \,\div\, w\,dx= \int_{\partial K_r^R}j w d \sigma - \int_{K_r^R} (\nabla j \cdot w)dx = \int_{\Gamma_r^R}j w\eta d \sigma - \int_{K_r^R} (\nabla j \cdot w)dx,
			$$
			and therefore 
			\begin{align}
			\label{est-var-measure}
			\|Dj\|(B_{R} \setminus \overline B_r) \le  \int_{\Gamma_r^R} j d\sigma  + \int_{K_r^R} |\nabla j|\,dx.
			\end{align}
			Moreover,   
			\begin{align}
			\int_{\Gamma_r^R} j d\sigma &= \int_{r}^{R} \tau^{-N} \cH_{N-2}(\Gamma_r^R \cap S_\tau)\ell(\tau)\,d\tau = \cH_{N-2}(\partial A) \int_{r}^{R} \tau^{-2}\ell(\tau)\,d\tau \nonumber\\
			&\le \frac{\cH_{N-2}(\partial A)}{r} \int_{r}^{R} \frac{\ell(\tau)}{\tau}\,d\tau \le \frac{\cH_{N-2}(\partial A)}{r\cH_{N-1}(A)} \int_{B_r^c} j(z)\,dz\label{gamma-star-part},
			\end{align}
			and by \eqref{eq:A4-2-radial} we have 
			\begin{align}
			\int_{K_r^R} |\nabla j|\,dx &= \cH_{N-1}(A)\int_{r}^R \tau^{N-1}\bigl|\tau^{-N}l'(\tau)- N \tau^{-N-1}l(\tau)\bigr|\,d\tau \nonumber\\
			&\le \cH_{N-1}(A) \int_{r}^{R_0} \Bigl(\frac{|\ell'(\tau)|}{\tau}+ N \frac{\ell(\tau)}{\tau^2}\Bigr)\,d\tau  \le \frac{\cH_{N-1}(A)(\widetilde{M} +N)}{r}\int_{r}^R\frac{\ell(\tau)}{\tau}\,d\tau \nonumber\\
			&\le \frac{\widetilde{M} +N}{r} \int_{B_r^c} j(z)\,dz \qquad \text{for $0<r<R <R_0$.} 
			\label{K-r-est}
			\end{align}
			Combining \eqref{est-var-measure}, \eqref{gamma-star-part}, and \eqref{K-r-est}, we conclude
			$$
			\|Dj\|(B_{R} \setminus \overline B_r) \le \frac{M}{r} \int_{B_r^c} j(z)\,dz
			\quad \text{ for } 0<r<R<R_0 
			$$
			with $M:= \Bigl(\frac{\cH_{N-2}(\partial A)}{\cH_{N-1}(A)}+ 3\widetilde{M} + N\Bigr)$.
	\end{itemize}
\end{example}

\begin{remark}
We close the discussion on examples with a remark concerning assumption \cref{eq:cond-K_as} in \cref{thm:continuity}. Indeed, in our proofs we need the existence of some constant $c\in \R$ (not necessarily positive) such that
\begin{equation}\label{necessary-inequality}
\cE(u,u)=\iint_{\R^N\,\R^N}(u(x)-u(y))u(x)K(x,y)\ dydx\geq c\|u\|_{L^2(\R^N)}^2,
\end{equation}
Indeed, this is still true with a somewhat more general assumption in place of \cref{eq:cond-K_as}. In the following, we say that $(\tilde{K}_{as})$ is satisfied, if there is a symmetric kernel $\tilde{k}:\R^N\times \R^N\to[0,\infty]$ with $|\{y\in \R^N\;:\; \tilde{k}(x,y)=0,\,K_a(x,y)\neq0\}|=0$ for all $x\in \R^N$, 
\begin{subequations}
\makeatletter
        \def\@currentlabel{\~K$_{as}$}
				\makeatother
 \label{eq:cond-K_as2}
\renewcommand{\theequation}{\~K$_{as}$.\arabic{equation}}
\begin{align}
\iint_{\R^N\,\R^N}(u(x)-u(y))^2\tilde{k}(x,y)\ dydx&\leq \iint_{\R^N\,\R^N}(u(x)-u(y))^2K_s(x,y)\ dydx\label{eq:cond-K_as2a} \\
 &\qquad\qquad\text{for all $u\in V(\R^N|\R^N)$, and}\notag\\
\tilde{A}(K, \tilde{k}):=\sup_{x\in \R^N} \int_{\{\tilde{k}(x,y)\neq 0\}} &\frac{(K_a(x,y))^2}{\tilde{k}(x,y)} \ dy<\infty.\label{eq:cond-K_as2b} 
\end{align}
\end{subequations}
If \eqref{eq:cond-K_as2} is satisfied in place of \eqref{eq:cond-K_as}, then \eqref{necessary-inequality} still holds, see \cref{lemma:garding}, and thus the conclusion of \cref{new-boundedness-unified-version} and \cref{thm:continuity} continue to hold.\\
Clearly, \eqref{eq:cond-K_as} implies \eqref{eq:cond-K_as2} by choosing $\tilde{k}=K_s$. The opposite, however, is not true: In \cite[Example (12), Section 6]{FKV15}, a kernel is discussed of the form
$$
k(x,y)=|x-y|^{-N-\alpha}1_{C_1}(x-y)+|x-y|^{-N-\beta}1_{C_2}(x-y)1_{B_1}(x-y),
$$
where $0<\beta<\frac{\alpha}{2}$ and $C_j=\{h\in \R^N\setminus\{0\}\;:\; h/|h|\in I_j\}$, $j=1,2$. Here, $I_1,I_2$ are nonempty disjoint subdomains of $\mathbb{S}^{N-1}$ satisfying $I_1=-I_1$ and $|(-I_2)\setminus I_2|>0$. If $\alpha<1$, $I_1$ and $I_2$ are of class $C^1$ in $\mathbb{S}^{N-1}$, then in view of \cref{examples:nonradial} it follows that this kernel satisfies \eqref{eq:cond-K}, \eqref{eq:cond-A1}, \eqref{eq:cond-A2}, \eqref{eq:cond-A-3-2}, and \eqref{eq:cond-K_as2}. As shown in \cite{FKV15}, however, this kernel does not satisfy \eqref{eq:cond-K_as}. 
\end{remark}

\subsection{Function spaces}\label{sec:spaces}

Let $\Omega \subset \R^N$ be an open set. We assume conditions $\eqref{eq:cond-A1}, \eqref{eq:cond-A2}$ to hold true. Let us quickly recall the relevant function spaces (c.f. \cref{def-class-solutions-intro}). We refer to \cite{FKV15}, \cite{JaWe19}, \cite{FoKa22}, and \cite{FeJa22} for detailed discussions. 

\begin{definition}
The space $V(\Omega|\R^N)$ consists of all $v \in L^2_{\loc}(\R^N)$, for which the seminorm $[v]_{V(\Omega|\R^N)}$ given by 
\begin{align*}
[v]^2_{V(\Omega|\R^N)} = \int_\Omega \int_{\R^N} \big(v(x) - v(y)\big)^2 \big(j(y-x) + j(x-y)\big) dy dx 
\end{align*}
is finite. The space $V_\Omega(\R^N)$ consists of all $v \in V(\R^N|\R^N)$ with $v=0$ in $\R^N \setminus \Omega$. The space $V_{\loc}(\Omega|\R^N)$ consists of all $v \in L^2_{\loc}(\R^N)$, for which the seminorm $[v]_{V(U|\R^N)}$ is finite for every open set $U \Subset \Omega$.
\end{definition}	

\begin{remark}
Note that the finiteness of $[v]_{V(\Omega|\R^N)}$ implies a certain weighted integrability of $v$ in all of $\R^N$ and it is equivalent to 
\begin{align*}
	\iint_{(\Omega^c \times \Omega^c)^c} \big(v(x) - v(y)\big)^2 j(y-x) dy dx < \infty.
\end{align*}
The spaces $V_{\Omega}(\R^N)$, $V(\Omega|\R^N)$, and resp. $V_{\loc}(\Omega|\R^N)$ can be seen as a nonlocal variants of $H^1_0(\Omega)$, $H^1(\Omega)$, and resp. $H^1_{\loc}(\Omega)$. We note that if $\Omega$ is bounded, then $V_{\Omega}(\R^N)$ is a Hilbert space with the semi-norm defined above (which is a norm on $V_\Omega(\R^N)$ in this case).
\end{remark}

The proof of the following observations is straightforward. 
\begin{lem}\label{lem:local-cutoff}
  \begin{enumerate}
  \item[(i)] The space $V(\Omega|\R^N)$ contains all constant functions. Moreover, if $u \in V(\Omega|\R^N)$, then also $u^\pm \in V(\Omega|\R^N)$.
  \item[(ii)] If $u \in V_{loc}(\Omega|\R^N)$, then also $u^\pm \in V_{loc}(\Omega|\R^N)$.
\item[(iii)]   Let $u\in V_{\loc}(\Omega|\R^N)$ and $U \Subset \Omega$ open. If $u\equiv 0$ in $\Omega\setminus U$, then $u 1_U\in V_U(\R^N) \subset V_\Omega(\R^N)$.
  \end{enumerate}
\end{lem}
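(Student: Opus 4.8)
The plan is to verify each of the three assertions directly from the definition of the seminorm $[\cdot]_{V(\Omega|\R^N)}$, using pointwise inequalities on the integrand and the elementary fact that the map $t \mapsto t^\pm$ is $1$-Lipschitz. First, for (i): if $u \equiv c$ is constant, then $u(x)-u(y) = 0$ for all $x,y$, so $[u]^2_{V(\Omega|\R^N)} = 0 < \infty$; hence every constant function lies in $V(\Omega|\R^N)$, and trivially in $L^2_{\loc}(\R^N)$. For the statement about $u^\pm$, I would invoke the pointwise inequality
\begin{align*}
|u^+(x) - u^+(y)| \le |u(x) - u(y)| \quad\text{and}\quad |u^-(x) - u^-(y)| \le |u(x) - u(y)| \qquad (x,y \in \R^N),
\end{align*}
which follows because $r \mapsto r^+ = \max\{r,0\}$ and $r \mapsto r^- = \max\{-r,0\}$ are both Lipschitz with constant $1$. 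Squaring and multiplying by the nonnegative weight $j(y-x)+j(x-y)$, then integrating over $\Omega \times \R^N$, gives $[u^\pm]_{V(\Omega|\R^N)} \le [u]_{V(\Omega|\R^N)} < \infty$. Since $u \in L^2_{\loc}(\R^N)$ clearly implies $u^\pm \in L^2_{\loc}(\R^N)$ (as $|u^\pm| \le |u|$), we conclude $u^\pm \in V(\Omega|\R^N)$.

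For (ii): this is immediate from (i) applied on every open subset $U \Subset \Omega$. Indeed, if $u \in V_{\loc}(\Omega|\R^N)$, then for each such $U$ we have $[u]_{V(U|\R^N)} < \infty$, and by the same pointwise Lipschitz bound as above, $[u^\pm]_{V(U|\R^N)} \le [u]_{V(U|\R^N)} < \infty$; since this holds for all $U \Subset \Omega$, it follows that $u^\pm \in V_{\loc}(\Omega|\R^N)$.

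For (iii): assume $u \in V_{\loc}(\Omega|\R^N)$ and $U \Subset \Omega$ open with $u \equiv 0$ in $\Omega \setminus U$. The point is to control the full double integral over $\R^N \times \R^N$ for the function $v := u 1_U$. I would choose an intermediate open set $U'$ with $U \Subset U' \Subset \Omega$ and split the domain of integration. On $U' \times \R^N$ we can bound $[v]_{V(U'|\R^N)}$ in terms of $[u]_{V(U'|\R^N)}$, noting that $v$ agrees with $u$ on $U$, vanishes on $U' \setminus U \subset \Omega \setminus U$, so $v$ is just $u$ on $U'$ (where $u$ vanishes on $U' \setminus U$); thus $v = u$ on $U'$, and $[v]_{V(U'|\R^N)} = [u]_{V(U'|\R^N)} < \infty$ since $U' \Subset \Omega$. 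It remains to estimate the contribution where $x \in \R^N \setminus U'$. There, using $v = u 1_U$ and $v(x)=0$ for $x \notin U$, the integrand $(v(x)-v(y))^2(j(y-x)+j(x-y))$ is supported on $x \in U$ (already covered) or on $x \in \R^N \setminus U'$ with $v(x) = 0$, where it reduces to $v(y)^2 (j(y-x)+j(x-y)) = u(y)^2 1_U(y)(j(y-x)+j(x-y))$ with $y \in U$ and $x$ bounded away from $U$; since $\dist(U, \R^N \setminus U') > 0$, we have $|x - y|$ bounded below, and the tail contribution is controlled by $\|u\|_{L^2(U)}^2 \cdot \sup_{y \in U}\int_{\R^N \setminus U'}(j(y-x)+j(x-y))\,dx$, which is finite by \eqref{eq:cond-A1} (the integral of $j$ over the complement of a ball around $0$ is finite). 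This shows $v \in V(\R^N|\R^N)$, and since $v \equiv 0$ outside $U$, in fact $v \in V_U(\R^N) \subset V_\Omega(\R^N)$.

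The only mildly delicate point is part (iii), where one must handle the "long-range" interactions between $U$ and the far complement $\R^N \setminus U'$; everything else is a direct consequence of the $1$-Lipschitz property of $t \mapsto t^\pm$. The tail bound in (iii) is exactly the place where assumption \eqref{eq:cond-A1} (which guarantees $j \in L^1(\R^N \setminus B_\delta)$ for every $\delta > 0$) enters, together with the uniform lower bound on $|x-y|$ coming from $U \Subset U'$. No serious obstacle is expected — this is why the lemma is stated as "straightforward."
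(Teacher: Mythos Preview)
Your argument for (i) and (ii) is correct and exactly the sort of direct verification the paper has in mind (the paper gives no proof, only the remark that these observations are ``straightforward'').

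In (iii) there is a small slip. You claim $[v]_{V(U'|\R^N)} = [u]_{V(U'|\R^N)}$ because $v=u$ on $U'$. But the seminorm $[\,\cdot\,]_{V(U'|\R^N)}$ integrates over $x\in U'$ and \emph{all} $y\in\R^N$, and for $y\in\R^N\setminus\Omega$ you only know $v(y)=0$, not $v(y)=u(y)$. So the equality need not hold. The fix is immediate and uses the same tail estimate you already wrote down: split the $y$-integral into $y\in\Omega$ (where $v=u$, giving a contribution $\le [u]^2_{V(U'|\R^N)}$) and $y\in\R^N\setminus\Omega$ (where $(v(x)-v(y))^2=u(x)^2 1_U(x)$, and since $\dist(U,\R^N\setminus\Omega)>0$ this is another tail term bounded by $\|u\|_{L^2(U)}^2\cdot 2\int_{|z|\ge\delta}j(z)\,dz$). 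With this correction your decomposition is complete and the conclusion $v\in V_U(\R^N)$ follows.
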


Next, let us assume $K$ and $j$ satisfy \eqref{eq:cond-K}. If $K, u,v$ are sufficiently regular, then
\begin{align*}
\int_{\R^N}  \op u(x) v(x) dx = \iint_{\R^{N}\R^{N}} \big(u(x)-u(y)\big) v(x) K(x,y) dy dx =: \cE(u,v) \,.
\end{align*}

If $K$ is symmetric and $j$ is even, then
\begin{align*}
\cE(u,v) = \frac12 \iint_{\R^{N}\R^{N}} \big(u(x)-u(y)\big) \big(v(x)-v(y)\big) K(x,y) dy dx  \,.
\end{align*}

Note that the term $|\cE(u,v)|$ is well-defined and finite, if $u,v \in V(\R^N|\R^N)$ or, if $u \in V(\Omega|\R^N)$ and $v \in V_\Omega(\R^N)$. Moreover, we emphasize that $C^{\infty}_c(\Omega)$ is dense in $V_{\Omega}(\R^N)$, if $\Omega$ is an open set with continuous and compact boundary (see \cite[Theorem 3.76]{F20}). Thus, the following definition of solution is a generalization of the one stated in the introduction.

\begin{definition}\label{def:weak-sol}
Let $f \in L^2_{\loc}(\Omega)$ and $u \in V_{\loc}(\Omega|\R^N)$. We say that $u$  satisfies 
\begin{equation}\label{def:subsolution}
\op u \leq f \quad \text{ weakly in } \Omega
\end{equation}
if for every open $U \Subset \Omega$ and every nonnegative $\varphi \in V_{U}(\R^N)$ the inequality $\cE(u, \varphi) \leq (f, \varphi)$ holds. In this case, we also call $u$ a weak subsolution  (of \eqref{def:subsolution}). The respective definitions for a weak supersolution or a weak solution are analogous. We also use the term 
\textit{variational} sub- or supersolution.
\end{definition}

In our analysis, the G{\aa}rding inequality for nonlocal bilinear forms as stated in \cite{FKV15} is useful to deal with the case of $K$ being non-symmetric.

\begin{lem}[G{\aa}rding inequality, \cite{FKV15}]\label{lemma:garding}
	Let $K$ satisfy \eqref{eq:cond-K}, with $j$ satisfying \eqref{eq:cond-A1} for some $\gamma\in(0,2)$, and \eqref{eq:cond-K_as2}. Then there is $c>0$ depending only on $\tilde{A}(K,\tilde k)$ in \eqref{eq:cond-K_as2} with 
	\begin{equation}
		\label{eq:garding-inequality}
		\cE(u,u)\geq \frac{1}{4}\iint_{\R^{N}\R^{N}} \big(u(x)-u(y)\big)^2  K_s(x,y) dy dx -c \|u\|_{L^2(\R^N)}^2\quad\text{for all $u\in V(\R^N|\R^N)$.}
	\end{equation}
\end{lem}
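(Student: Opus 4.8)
The plan is to decompose $\cE(u,u)$ into its symmetric part and a remainder coming from the antisymmetric part $K_a$, and then to absorb the remainder using the Cauchy–Schwarz inequality together with the control provided by \eqref{eq:cond-K_as2}. Writing $K = K_s + K_a$ and using the elementary identity
\begin{align*}
\iint_{\R^N\R^N}\bigl(u(x)-u(y)\bigr)u(x)K_s(x,y)\,dy\,dx
= \frac12 \iint_{\R^N\R^N}\bigl(u(x)-u(y)\bigr)^2 K_s(x,y)\,dy\,dx,
\end{align*}
which follows by symmetrizing in $x,y$, we get
\begin{align*}
\cE(u,u) = \frac12 \iint_{\R^N\R^N}\bigl(u(x)-u(y)\bigr)^2 K_s(x,y)\,dy\,dx
+ \iint_{\R^N\R^N}\bigl(u(x)-u(y)\bigr)u(x)K_a(x,y)\,dy\,dx.
\end{align*}
The first term is the "good" term; the second, call it $R(u)$, is the one that must be controlled. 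Note that all integrals are finite for $u \in V(\R^N|\R^N)$ by \eqref{eq:cond-K}, \eqref{eq:cond-A1}, and the well-definedness remarks; one should be slightly careful here and argue first for $u \in C_c^\infty(\R^N)$ (or a suitable dense subclass) and then pass to the limit, but I will not belabor this point.

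To estimate $R(u)$, I would antisymmetrize: since $K_a(x,y) = -K_a(y,x)$, replacing $(x,y)$ by $(y,x)$ and averaging gives
\begin{align*}
R(u) = \frac12 \iint_{\R^N\R^N}\bigl(u(x)-u(y)\bigr)\bigl(u(x)+u(y)\bigr)K_a(x,y)\,dy\,dx
= \frac12 \iint_{\R^N\R^N}\bigl(u(x)^2-u(y)^2\bigr)K_a(x,y)\,dy\,dx.
\end{align*}
Hmm — this last form would vanish identically if $K_a$ were merely antisymmetric and the double integral converged absolutely, which it need not; so instead I would keep $R(u)$ in the form $\iint (u(x)-u(y))u(x)K_a(x,y)$ and apply Cauchy–Schwarz directly against the measure $\tilde k(x,y)\,dy\,dx$, splitting the factor $K_a = \frac{K_a}{\sqrt{\tilde k}}\cdot\sqrt{\tilde k}$ on the set $\{\tilde k(x,y)\neq 0\}$ (which, by the null-set hypothesis in \eqref{eq:cond-K_as2}, carries the whole integral since $K_a = 0$ a.e. off it):
\begin{align*}
|R(u)| \le \Bigl(\iint_{\R^N\R^N}\bigl(u(x)-u(y)\bigr)^2 \tilde k(x,y)\,dy\,dx\Bigr)^{1/2}
\Bigl(\iint_{\{\tilde k\neq 0\}} u(x)^2\,\frac{K_a(x,y)^2}{\tilde k(x,y)}\,dy\,dx\Bigr)^{1/2}.
\end{align*}
The second factor is bounded by $\bigl(\tilde A(K,\tilde k)\bigr)^{1/2}\|u\|_{L^2(\R^N)}$ by \eqref{eq:cond-K_as2b}, and the first is bounded, via \eqref{eq:cond-K_as2a}, by $\bigl(\iint (u(x)-u(y))^2 K_s(x,y)\bigr)^{1/2}$. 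Writing $E := \iint (u(x)-u(y))^2 K_s(x,y)\,dy\,dx$, we thus have $|R(u)| \le \tilde A^{1/2}\,E^{1/2}\,\|u\|_{L^2(\R^N)}$, and Young's inequality $ab \le \frac{1}{4}a^2 + b^2$ with $a = E^{1/2}$, $b = \tilde A^{1/2}\|u\|_{L^2}$ gives $|R(u)| \le \frac14 E + \tilde A\,\|u\|_{L^2(\R^N)}^2$. Combining with the decomposition of $\cE(u,u)$ yields
\begin{align*}
\cE(u,u) \ge \tfrac12 E - |R(u)| \ge \tfrac12 E - \tfrac14 E - \tilde A\,\|u\|_{L^2(\R^N)}^2 = \tfrac14 E - c\,\|u\|_{L^2(\R^N)}^2
\end{align*}
with $c := \tilde A(K,\tilde k)$, which is exactly \eqref{eq:garding-inequality}.

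The only genuine subtlety — the "main obstacle" — is the justification of the manipulations (symmetrization, Fubini, and the application of Cauchy–Schwarz) on unbounded domains of integration when $j \notin L^1$: the integrand of $\cE(u,u)$ is not a priori absolutely integrable, only the symmetrized quantity $(u(x)-u(y))^2 K_s$ is. I would handle this by first proving the identity and estimate for $u \in C_c^\infty(\R^N)$, where near the singularity one uses \eqref{eq:cond-A1} with the Lipschitz bound $|u(x)-u(y)| \le C|x-y|$ and away from it one uses $u$ compactly supported plus $j \in L^1(B_1^c)$, and then passing to general $u \in V(\R^N|\R^N)$ by density of $C_c^\infty(\R^N)$ in $V_\Omega(\R^N)$ (cited above) applied on large balls, together with Fatou's lemma on the nonnegative quantity $E$. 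Since this argument is essentially the one already used in \cite{FKV15} to establish the well-definedness of $\cE$, I would simply cite \cite[Lemma 2.4 and its proof]{FKV15} for the technical part and present the energy decomposition and Cauchy–Schwarz estimate above as the substance of the proof.
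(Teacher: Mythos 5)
Your proof is correct and coincides with the argument in \cite{FKV15}, which the paper cites for this lemma without reproducing a proof: split $K=K_s+K_a$, symmetrize the $K_s$-part to get $\tfrac12\iint(u(x)-u(y))^2K_s$, bound the $K_a$-remainder by Cauchy–Schwarz with weight $\tilde k$ using the null-set hypothesis and \eqref{eq:cond-K_as2b}, control the resulting $\tilde k$-energy by $K_s$ via \eqref{eq:cond-K_as2a}, and absorb a quarter of the energy with Young's inequality, giving $c=\tilde A(K,\tilde k)$. Your remark that the formal antisymmetrization to $\tfrac12\iint(u(x)^2-u(y)^2)K_a$ is misleading (it is not absolutely convergent in general), and your handling of the technical convergence point by a density/Fatou argument for $u\in V(\R^N|\R^N)$, are both the right way to frame the one genuine subtlety.
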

 
The double integral in the lower bound of the G{\aa}rding inequality can be estimated further with the Poincar\'e inequality:

\begin{lem}[Poincar\'e inequality, Lemma 2.7 \cite{FKV15}]\label{lemma:poincare}
Let $K$ and $j$ satisfy \eqref{eq:cond-K} and assume that $j$ does not vanish identically, i.e.,
\begin{equation}\label{eq:poincare}
\text{the set $\{z \in \R^N\::\: j(z)>0\}$ has positive Lebesgue measure.}
\end{equation}
Then 
$$
\lambda_1(\Omega):=\inf_{\substack{u\in V_{\Omega}(\R^N)\\ \|u\|_{L^2(\Omega)}=1}}\quad \iint_{\R^N\,\R^N}(u(x)-u(y))^2K_s(x,y)\ dxdy>0
$$
for every open bounded set $\Omega\subset \R^N$.
\end{lem}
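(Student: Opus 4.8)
\textbf{Proof plan for the Poincaré inequality (Lemma~\ref{lemma:poincare}).}

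The plan is to argue by contradiction via a compactness argument, using the fact that $V_\Omega(\R^N)$ embeds compactly into $L^2(\Omega)$ for bounded $\Omega$ (which is the nonlocal analogue of the Rellich--Kondrachov theorem and is available since $j$ does not vanish identically; compare the discussion of the Hilbert space structure of $V_\Omega(\R^N)$ above). Suppose $\lambda_1(\Omega)=0$. Then there is a minimizing sequence $(u_n) \subset V_\Omega(\R^N)$ with $\|u_n\|_{L^2(\Omega)}=1$ and
\[
[u_n]^2 := \iint_{\R^N\,\R^N}\big(u_n(x)-u_n(y)\big)^2 K_s(x,y)\,dx\,dy \longrightarrow 0.
\]
By \eqref{eq:cond-K}, $[u_n]$ is comparable to $[u_n]_{V(\Omega|\R^N)}^2$, so $(u_n)$ is bounded in $V_\Omega(\R^N)$. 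Passing to a subsequence, $u_n \weakto u$ weakly in $V_\Omega(\R^N)$ and $u_n \to u$ strongly in $L^2(\Omega)$ by compactness; since $u_n=0$ on $\Omega^c$, also $u \in V_\Omega(\R^N)$ with $u=0$ on $\Omega^c$, and $\|u\|_{L^2(\Omega)}=1$, so $u \not\equiv 0$. By weak lower semicontinuity of the (nonnegative, quadratic) form $v \mapsto \iint (v(x)-v(y))^2 K_s(x,y)\,dx\,dy$, we get $\iint (u(x)-u(y))^2 K_s(x,y)\,dx\,dy \le \liminf_n [u_n]=0$, hence $u(x)=u(y)$ for a.e.\ pair $(x,y)$ with $K_s(x,y)>0$, i.e.\ with $j(y-x)>0$ (up to the $\Lambda$-comparability).

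The next step is to conclude from this that $u$ is constant, and hence, since $u=0$ outside the bounded set $\Omega$, that $u\equiv 0$ — contradicting $\|u\|_{L^2(\Omega)}=1$. This follows from the positive-measure assumption \eqref{eq:poincare}: the set $Z=\{z\in\R^N : j(z)>0\}$ has positive Lebesgue measure, so there is a point $z_0$ of positive density of $Z$; by a standard translation/Steinhaus-type argument the difference set of $Z$ (or, more simply, $Z\cap(-Z)$ if one first symmetrizes, noting $K_s(x,y)=K_s(y,x)$) contains a ball around the origin, or at least a set of directions along which one can connect any two points of $\R^N$ by finitely many admissible jumps. Concretely: for a.e.\ $x$ and a.e.\ translation $h$ with $h\in Z\cap(-Z)$ one has $u(x+h)=u(x)$; iterating along a spanning set of such $h$'s shows $u$ agrees a.e.\ with a function invariant under a full-rank lattice of translations, and being also $L^2(\R^N)$ (as $u=0$ on $\Omega^c$), it must vanish a.e.

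I would expect the main obstacle to be making the connectivity step fully rigorous in the generality allowed here — that is, deducing that "$u(x)=u(y)$ whenever $j(y-x)>0$" together with "$\{j>0\}$ has positive measure" forces $u$ to be a.e.\ constant. One clean way is to use Fourier analysis: writing $v=u-\fint_\Omega u$, the relation $\iint (v(x)-v(y))^2 j_s(x-y)\,dx\,dy=0$ with $j_s(z):=\tfrac12(j(z)+j(-z))$ translates, via Plancherel, into $\widehat{v}(\xi)\cdot\big(\text{something positive on a set of full measure of }\xi\big)=0$, forcing $\widehat v \equiv 0$; this avoids any delicate combinatorial iteration. Since this lemma is quoted verbatim from \cite[Lemma~2.7]{FKV15}, one can of course simply cite it, but the contradiction-plus-compactness scheme above is the self-contained route I would write out, with the Fourier argument as the technical heart of the final step.
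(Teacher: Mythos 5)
Your outline relies on the compactness of the embedding $V_\Omega(\R^N)\hookrightarrow L^2(\Omega)$, but this is not available under the hypotheses of the lemma, and in fact it is false in general here. The lemma assumes only \eqref{eq:cond-K} and that $\{j>0\}$ has positive measure; in particular $j\in L^1(\R^N)$ is permitted (and this case is genuinely needed in \cref{remark-new-section-boundedness}(ii), where \eqref{eq:cond-A2} is dropped). If $j\in L^1(\R^N)$, then for every $v\in L^2(\R^N)$ supported in $\overline\Omega$ one has
$$
[v]^2_{V(\Omega|\R^N)} \;\le\; 4\,\|j\|_{L^1(\R^N)}\,\|v\|^2_{L^2(\Omega)},
$$
so $V_\Omega(\R^N)=L^2(\Omega)$ with equivalent norms and the embedding is the \emph{identity}, which is not compact (since $\Omega$ is open and bounded, $L^2(\Omega)$ is infinite-dimensional). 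Thus the claimed Rellich--Kondrachov-type step fails, and the whole contradiction scheme collapses: a bounded minimizing sequence in $V_\Omega(\R^N)$ need not have an $L^2$-strongly convergent subsequence. This is a structural problem, not a fixable technicality, because the lemma is precisely about extracting a \emph{quantitative} spectral gap in a situation where the form may fail to encode any smoothing at all.

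Even setting compactness aside, the second half of your plan — deducing $u\equiv 0$ from the vanishing of the form — is left at the level of a sketch (``Steinhaus-type'', ``full-rank lattice of translations''), and the difficulty there is also quantitative: you need the conclusion with a uniform constant, not merely that the zero energy case forces $u\equiv 0$. The Fourier idea you gesture at is the right ingredient, but it should be used \emph{directly}, without any compactness detour. Concretely: pick a bounded symmetric set $A\subset\R^N\setminus\{0\}$ of positive measure and $\delta>0$ with $j_s\ge\delta$ on $A$, where $j_s(z)=\tfrac12(j(z)+j(-z))$. For $u\in V_\Omega(\R^N)$ with $\|u\|_{L^2(\Omega)}=1$, \eqref{eq:cond-K} gives
$$
\iint (u(x)-u(y))^2K_s(x,y)\,dx\,dy \;\ge\; \Lambda^{-1}\delta\int_A\|u-\tau_h u\|_{L^2}^2\,dh
\;=\;2\Lambda^{-1}\delta\bigl(|A|-\langle u,u*1_A\rangle\bigr),
$$
and by Plancherel $\langle u,u*1_A\rangle=\int |\hat u|^2\widehat{1_A}\,d\xi$. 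Since $u$ is supported in the bounded set $\Omega$ with unit $L^2$-norm, $\|\hat u\|_{L^\infty}\le|\Omega|^{1/2}$, so $\hat u$ cannot concentrate near $\xi=0$: choosing $\rho$ with $|\Omega|\,|B_\rho|\le\tfrac12$ forces $\int_{B_\rho}|\hat u|^2\le\tfrac12$. On $\{|\xi|\ge\rho\}$ one has $\sup|\widehat{1_A}|=:M_\rho<|A|$ (continuity, Riemann--Lebesgue, and $|\widehat{1_A}(\xi)|<|A|$ for $\xi\ne0$). Splitting the integral then gives $\langle u,u*1_A\rangle\le\tfrac12(|A|+M_\rho)$, hence
$$
\iint (u(x)-u(y))^2K_s(x,y)\,dx\,dy \;\ge\; \Lambda^{-1}\delta\,(|A|-M_\rho) \;>\;0,
$$
a bound depending only on $\Lambda$, $j$ and $\Omega$. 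This is the kind of direct, quantitative argument the lemma requires; the paper itself simply cites \cite[Lemma~2.7]{FKV15}, which proceeds without any compactness ingredient.
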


\begin{definition}\label{bounded j tail defi}
Let $u\in L^1_{loc}(\R^N)$. We say that $u$ has a bounded $j$-tail in $\Omega$, if
\begin{equation}\label{bounded j tail function defi}
x\mapsto \int_{\R^N\setminus B_r(x)}|u(y)|j(y-x)\,dy
\end{equation}	
belongs to $L^{\infty}_{loc}(\Omega)$ for every $r>0$. In this case, we set for $A\Subset B\Subset \Omega$, see also \eqref{eq:finite-j-tail},
	$$
	T(u,A,B):= \sup_{x \in A} \int_{\R^N \setminus B}|u(y)|j(x-y)\,dy.
	$$
\end{definition}

\begin{remark} Finally, let us comment on the Definition of $u$ having a bounded $j$-tail (see also \cref{new-boundedness-main-theorem-remark2}).
\begin{enumerate}
\item[(i)] If $j(h)=|h|^{-N-\alpha}$ for some $\alpha>0$, then every function $u\in L^1(\R^N;(1+|x|)^{-N-\alpha})$ has a bounded $j$-tail in $\R^N$.
\item[(ii)] If $u\in L^2_{\loc}(\R^N)$, and $j\in L^2_{\loc}(\R^N\setminus \{0\})$, then the local boundedness of the function in \eqref{bounded j tail function defi} for any $r>0$ is equivalent to the following:
$$
\text{There is $r>0$ such that}\quad \sup_{x\in A}\int_{\R^N\setminus B_r}|u(x+h)|j(h)\ dh<\infty\quad\text{for all $A\Subset \Omega$.}
$$
\item[(iii)] If $v\in L^p(\R^N)$ for some $p\in[1,\infty]$ and $j\in L^{p'}(\R^N\setminus B_r)$ for all $r>0$, where $\frac1p+\frac1{p'}=1$, then $u$ has a bounded $j$-tail in $\R^N$.
\end{enumerate}
\end{remark}

\section{Boundedness of solutions}\label{sec:boundedness}

This section is devoted to the proof of Theorem~\ref{new-boundedness-unified-version}, which we restate here for the reader's convenience.

\begin{thm}
  \label{new-boundedness-unified-version-section}
  Assume that \cref{eq:cond-K} holds with some measurable function $j: \R^N \setminus \{0\} \to [0,\infty)$ satisfying \eqref{eq:cond-A1} for some $\gamma \le 2$, and suppose also that \eqref{eq:cond-K_as2} holds. Moreover, let $\Omega \subset \R^N$ be a bounded open set with continuous boundary, and let $f,W \in L^\infty(\Omega)$. If
  \begin{equation}
    \label{eq:key-comparison-W-j-section}
  \Lambda \|W^+\|_{L^\infty(\Omega)} < \int_{\R^N}j(z)\,dz \le \infty  
  \end{equation}
  then every weak solution $u \in V(\Omega|\R^N)$ of \eqref{general-op-eq} with $u \in L^\infty(\Omega^c)$
satisfies $u \in L^\infty(\Omega)$, and there exists a constant $C>0$ not depending on $f$ and $u$ with
  \begin{equation}
  \label{new-boundedness-unified-version-estimate-section}
\|u\|_{L^\infty(\Omega)} \le C\Bigl( \|f\|_{L^\infty(\Omega)} + \|u\|_{L^\infty(\Omega^c)} +\|u\|_{L^2(\Omega)} \Bigr).    
  \end{equation}
If, in addition, $W \le 0$ and the kernel $K$ is symmetric, the term $\|u\|_{L^2(\Omega)}$ can be dropped in \eqref{new-boundedness-unified-version-estimate-section}. 
\end{thm}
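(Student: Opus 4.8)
The plan is to run a De Giorgi--Stampacchia iteration on level sets of $u$. Since $(-u,-f)$ is again a weak solution with the same $W$ and all hypotheses are invariant under $(u,f)\mapsto(-u,-f)$, it suffices to bound $u$ from above. Put $k_0:=\|u\|_{L^{\infty}(\Omega^c)}$ and, for $k\ge k_0$, let $v_k:=(u-k)^+$. Because $u\le k_0\le k$ a.e.\ on $\Omega^c$ we have $v_k\equiv0$ on $\Omega^c$; applying \cref{lem:local-cutoff}(i) to $u-k\in V(\Omega|\R^N)$ gives $v_k\in V(\Omega|\R^N)$, and the continuity of $\partial\Omega$ then forces $v_k\in V_\Omega(\R^N)$, so $v_k$ is an admissible test function in \eqref{eq:def-weak-sol-intro}.

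First I would derive the basic energy inequality. Testing the equation with $v_k$ and using bilinearity with the splitting $u=v_k+\min\{u,k\}$ gives $\cE(v_k,v_k)=\int_\Omega(Wu+f)v_k\,dx-\cE(\min\{u,k\},v_k)$. The key point is the sign $\cE(\min\{u,k\},v_k)\ge0$: since $v_k(x)\neq0$ forces $x$ into $A_k:=\{u>k\}\cap\Omega$, where $\min\{u,k\}(x)=k$,
\[
\cE(\min\{u,k\},v_k)=\iint_{A_k\times\R^N}\bigl(k-\min\{u(y),k\}\bigr)\,v_k(x)\,K(x,y)\,dy\,dx\ \ge\ 0,
\]
the integrand being nonnegative because $\min\{u(y),k\}\le k$ and $K\ge0$. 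Hence $\cE(v_k,v_k)\le\int_\Omega(Wu+f)v_k\,dx$; inserting $u=v_k+k>0$ on $A_k$ this gives
\[
\cE(v_k,v_k)\ \le\ \|W^+\|_{L^{\infty}(\Omega)}\|v_k\|_{L^2(\Omega)}^2+\bigl(k\|W^+\|_{L^{\infty}(\Omega)}+\|f\|_{L^{\infty}(\Omega)}\bigr)\|v_k\|_{L^1(\Omega)}.
\]

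Next I would turn $\cE(v_k,v_k)$ into something coercive. Since $v_k$ is supported in $\Omega$, the G{\aa}rding inequality (\cref{lemma:garding}) gives $\cE(v_k,v_k)\ge\tfrac14\iint_{\R^N\R^N}(v_k(x)-v_k(y))^2K_s(x,y)\,dy\,dx-c_\ast\|v_k\|_{L^2(\Omega)}^2$ (with $c_\ast=0$ and $\tfrac14$ replaced by $\tfrac12$ in the symmetric case), and the Poincar\'e inequality (\cref{lemma:poincare}) bounds the double integral below by $\lambda_1(\Omega)\|v_k\|_{L^2(\Omega)}^2$. Using the pointwise monotonicity $\|v_k\|_{L^2(\Omega)}\le\|v_{k_0}\|_{L^2(\Omega)}\le\|u\|_{L^2(\Omega)}$ to control the negative $L^2$ contributions, one arrives at an estimate of the form
\[
\iint_{\R^N\R^N}\bigl(v_k(x)-v_k(y)\bigr)^2K_s(x,y)\,dy\,dx\ \le\ C\bigl(k+\|f\|_{L^{\infty}(\Omega)}\bigr)\|v_k\|_{L^1(\Omega)}+C\bigl(1+\|W^+\|_{L^{\infty}(\Omega)}\bigr)\|u\|_{L^2(\Omega)}^2 ,
\]
with $C=C(\Omega,j,\Lambda,N)$; when $W\le0$ and $K$ is symmetric, $c_\ast=0$ and the last term drops out, which is what ultimately lets $\|u\|_{L^2(\Omega)}$ be removed from \eqref{new-boundedness-unified-version-estimate-section}.

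The hard part is to close the iteration, and this is exactly where the quantitative hypothesis $\Lambda\|W^+\|_{L^\infty(\Omega)}<\int_{\R^N}j$ and the weak-singularity structure of $j$ are needed. For strongly singular $j$ one would now invoke a fractional Sobolev inequality $V_\Omega(\R^N)\hookrightarrow L^q(\Omega)$, $q>2$, to convert the last display into a super-linear recursion $|A_{k'}|\le C(k,f)\,(k'-k)^{-2}|A_k|^{1+\delta}$ and conclude by Stampacchia's lemma that $|A_{k_0+d}|=0$ for a suitable $d\sim\|f\|_{L^{\infty}(\Omega)}+\|u\|_{L^{\infty}(\Omega^c)}+\|u\|_{L^2(\Omega)}$, i.e.\ $u\le k_0+d$ a.e.; this would finish the proof. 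No Sobolev inequality of this kind is available here, so the gain of integrability must be produced by hand. I would do this by writing $\iint(v_k(x)-v_k(y))^2K_s\gtrsim\int_{\R^N}j(h)\,\|v_k-v_k(\cdot-h)\|_{L^2(\R^N)}^2\,dh$ and showing that, under \eqref{eq:cond-A1}--\eqref{eq:cond-A2} --- and, in the complementary case $j\in L^1(\R^N)$, via the representation $\op u=c\,u-\widetilde Ku$ with $c(x)=\int_{\R^N}K(x,y)\,dy\ge\Lambda^{-1}\int_{\R^N}j$ and $\widetilde Ku(x)=\int_{\R^N}u(y)K(x,y)\,dy$, so that the equation reads $u=(\widetilde Ku+f)/(c-W)$ with $c-W\ge\Lambda^{-1}\int_{\R^N}j-\|W^+\|_{L^{\infty}(\Omega)}>0$ by hypothesis --- one obtains a Nash-type inequality $\|v_k\|_{L^2(\Omega)}^2\,\Psi\!\bigl(\|v_k\|_{L^2(\Omega)}/\|v_k\|_{L^1(\Omega)}\bigr)\le C\iint(v_k(x)-v_k(y))^2K_s$ with $\Psi$ nondecreasing and unbounded. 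Feeding this into the estimate of the third step and iterating over the levels $k_n=k_0+d(1-2^{-n})$ drives $|A_{k_0+d}|$ to $0$. The principal obstacle is precisely this gain-of-integrability step: isolating the weakest assumption on $j$ under which a (possibly only logarithmic) Nash inequality survives, together with its $j\in L^1$ substitute, and verifying that even a logarithmic gain still forces the iteration to a finite level in finitely many steps.
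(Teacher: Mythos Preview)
Your starting point coincides with the paper's: one tests with $w_t=(u-t)^+$ for $t\ge\|u\|_{L^\infty(\Omega^c)}$, notes that $w_t\in V_\Omega(\R^N)$, and exploits the sign $\cE(\min\{u,t\},w_t)\ge 0$ (equivalently $\cE((u-t)^-,w_t)\le 0$). After that the two arguments diverge completely. The paper does \emph{not} run a De Giorgi--Stampacchia iteration and never appeals to any Sobolev- or Nash-type gain of integrability; instead it obtains $w_t\equiv 0$ for a single explicit level $t$ of the right size by a kernel-decomposition trick.

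Concretely, the paper treats three cases. If $j\in L^\infty(\R^N)$ (hence $j\in L^1\cap L^2$ by \eqref{eq:cond-A1}), one writes $\cE(u,w_t)=\int w_t(x)\bigl(u(x)\int K(x,y)\,dy-\int u(y)K(x,y)\,dy\bigr)dx$ and bounds the inner integrals by $\Lambda^{-1}\|j\|_{L^1}$ from below resp.\ by $\Lambda(\|u\|_{L^2(\Omega)}\|j\|_{L^2}+\|u\|_{L^\infty(\Omega^c)}\|j\|_{L^1})$ from above; combined with $\cE(u,w_t)=\int(Wu+f)w_t$ this forces $w_t\equiv 0$ once $t\gtrsim C_{u,f}:=\|f\|_{L^\infty}+\|u\|_{L^2(\Omega)}+\|u\|_{L^\infty(\Omega^c)}$, using exactly the strict inequality $\Lambda^{-1}\|j\|_{L^1}>\|W^+\|_{L^\infty}$. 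If $j\in L^1\setminus L^\infty$, one splits $K=K_\tau+K^\tau$ with $K_\tau=\min\{\tau,K\}$; the $K_\tau$-part is handled as before, while for the $K^\tau$-part one uses $\cE^\tau(u,w_t)\ge\cE^\tau(w_t,w_t)\ge -\Lambda\|j^\tau\|_{L^1}\|w_t\|_{L^2}^2$ via Young's inequality, and then lets $\tau\to\infty$ so that $\|j^\tau\|_{L^1}\to 0$. In the remaining case $j\notin L^1$, one instead splits along the level sets $M_\tau=\{j_s\le\tau\}$: the ``low'' part $K_\tau=\tfrac12 1_{M_\tau}K$ has comparison function in $L^1\cap L^2$ with arbitrarily large $L^1$-mass (by \eqref{eq:cond-A2}), while the ``high'' part $K^\tau$ still satisfies \eqref{eq:cond-K_as2} with constant $\le 2\tilde A(K,\tilde k)$, so the G{\aa}rding inequality gives $\cE^\tau(u,w_t)\ge -c\|w_t\|_{L^2}^2$. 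Choosing $\tau$ so that $\Lambda^{-1}\|j_\tau\|_{L^1}\ge 2(\|W^+\|_{L^\infty}+c)$ and then $t\gtrsim C_{u,f}$ again yields $w_t\equiv 0$. The final statement (dropping $\|u\|_{L^2(\Omega)}$ when $K$ is symmetric and $W\le 0$) is obtained a posteriori by a contradiction argument using Poincar\'e.

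The gap in your proposal is precisely the one you flag yourself: under hypotheses as weak as \eqref{eq:cond-A1}--\eqref{eq:cond-A2} there is no Sobolev embedding and no usable Nash inequality in general, and a merely logarithmic gain need not close a Stampacchia recursion in finitely many steps. The paper's message is that no gain of integrability is needed at all---the quantitative hypothesis $\Lambda\|W^+\|_{L^\infty}<\int j$ is cashed in directly through the $L^1$-mass of (a truncation of) $j$, which produces a coercive zero-order term $u(x)\cdot\Lambda^{-1}\|j_\tau\|_{L^1}$ in the test inequality and kills $w_t$ at a single level. Your $j\in L^1$ remark (``$u=(\widetilde Ku+f)/(c-W)$'') is morally the same mechanism; the point is that a suitable decomposition makes this mechanism available even when $j\notin L^1$, and then no iteration is required.
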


The following proof is related to the $\delta$-decomposition developed in \cite{FJW22}, see also \cite[Theorem 4.1]{FeJa22} for the symmetric case. More precisely, we have to introduce a new variant of this decomposition in order to deal with the weak assumptions of . 

\begin{proof}
  First note that since $u\in V(\Omega|\R^N)$ by the density result \cite[Theorem 3.76]{F20} we also have
$$
\cE(u,w)=\int_{\Omega} (W(x)u +f)w\ dx\quad\text{for all $w\in V_{\Omega}(\R^N)$.}
$$
To show the claim, we use the functions $w_t=(u-t)^+$ with $t \geq \|u\|_{L^{\infty}(\Omega^c)}$, which for such $t$ belong to $V_{\Omega}(\R^N)$ by Lemma~\ref{lem:local-cutoff}.

{\bf Case 1.} We first consider the case where $j \in L^\infty(\R^N)$. Note that with \eqref{eq:cond-A1} it follows $j\in L^p(\R^N)$ for all $p\in[1,\infty]$. We set
$$
c_1(j):= \Lambda^{-1}\|j\|_{L^1(\R^N)}>0 \qquad \text{and}\qquad c_2(j):= \Lambda \Bigl(\|j\|_{L^2(\R^N)}+ \|j\|_{L^1(\R^N)}\Bigr)  >0
$$
and we note that
  \begin{equation}
    \label{eq:key-comparison-W-j-section-variant}
c_1(j) > \|W^+\|_{L^\infty(\Omega)} 
  \end{equation}
by (\ref{eq:key-comparison-W-j-section}). In this case we then have
\begin{align*}
  0 &= \cE(u,w_t)-\int_{\Omega}(f+ Wu)(x) w_t(x)\,dx \\
	&\ge \cE(u,w_t)-  \int_{\Omega}\Bigl(\|f\|_{L^\infty(\Omega)}+\|W^+\|_{L^\infty(\Omega)}u(x)\Bigr)w_t(x)\,dx\\
  &=\int_{\R^N}w_t(x)\biggl(u(x)\Bigl(\int_{\R^N}K(x,y)\,dy-\|W^+\|_{L^\infty(\Omega)}\Bigr) - \int_{\R^N}u(y)K(x,y)\,dy - \|f\|_{L^\infty(\Omega)} \biggr)dx \\
  &\ge \int_{\Omega}w_t(x)\biggl(u(x) \Bigl(\Lambda^{-1}\|j\|_{L^1(\R^N)}-\|W^+\|_{L^\infty(\Omega)}\Bigr) - \Lambda \int_{\R^N}u(y)j(x-y)\,dy - \|f\|_{L^\infty(\Omega)} \Bigr)dx\\
&\ge \int_{\Omega}w_t(x)\biggl(u(x)\Bigl(c(j)-\|W^+\|_{L^\infty(\Omega)}\Bigr)\\
&\qquad - \Lambda \Bigl(\|u\|_{L^2(\Omega)} \|j\|_{L^2(\R^N)}+ \|u\|_{L^\infty(\Omega^c)}\|j\|_{L^1(\R^N)}\Bigr)-\|f\|_{L^\infty(\Omega)} \biggr)dx\\  
&\ge \int_{\Omega}w_t(x)\biggl(u(x)\Bigl(c(j)-\|W^+\|_{L^\infty(\Omega)}\Bigr) - (c_2(j)+1)C_{u,f}\Bigr)\Biggr)dx  
\end{align*}
with
$$
C_{u,f}:= \|f\|_{L^\infty(\Omega)} + \|u\|_{L^2(\Omega)}+ \|u\|_{L^\infty(\Omega^c)}.
$$
For $t \ge \frac{(c_2(j)+1)}{c(j)-\|W^+\|_{L^\infty(\Omega)}}C_{u,f}$, we then have that 
\begin{align*}
  &0 \ge \int_{\Omega}w_t(x)\biggl( \Bigl(c(j)-\|W^+\|_{L^\infty(\Omega)}\Bigr) u(x) - (c_2(j)+1)C_{u,f}\Bigr)\Biggr)dx   \\
  &=
    \int_{\Omega}w_t(x)\biggl(\Bigl(c(j)-\|W^+\|_{L^\infty(\Omega)}\Bigr) (u(x)-t)+t \Bigl(c(j)-\|W^+\|_{L^\infty(\Omega)}\Bigr) -(c_2(j)+1)C_{u,f} \biggr)dx\\
  &\ge \Bigl(c(j)-\|W^+\|_{L^\infty(\Omega)}\Bigr) \|w_t\|_{L^2(\Omega)}^2.
\end{align*}
This implies that $w_t \equiv 0$, i.e., $u\leq t$ in $\Omega$. Repeating the argument with $-u$ and $-f$, we get $u\ge -t$ in $\Omega$. Hence $u \in L^\infty(\Omega)$, and (\ref{new-boundedness-unified-version-estimate-section}) follows.

{\bf Case 2.} Next, in the case where $j \in L^1(\R^N)$ but $j \not \in L^{\infty}(\R^N)$, we perform a kernel decomposition similar to the one in\cite{FJW22,FeJa22}, but we have to split the kernel in a more subtle way than in these papers to deal with our current weak assumptions. For this we define, for every $\tau>0$, the kernel $K_\tau:= \min\{\tau, K\}$ and $K^\tau := K-K_\tau$. We also define
$$
j_\tau:= \min \{\tau, j\} \qquad \text{and}\qquad j^\tau: =(j-\tau)^+,
$$
noting that
$$
\Lambda^{-1}j_\tau(y-x) \le   K_\tau(x,y)\quad \text{and}\quad 
K^\tau(x,y) \le \Lambda j^\tau(y-x)\qquad \text{for every $x,y \in \R^N$.}
$$
We also let $\cE_{\tau}$ resp. $\cE^{\tau}$ be the bilinear forms associated to the kernels $K_{\tau}$ and $K^\tau$, respectively. Similarly as above, we then have that
\begin{align}
&\cE_\tau(u,w_t)=\int_{\R^N}w_t(x)\Bigl(u(x)\int_{\R^N}K_\tau(x,y)\,dy - \int_{\R^N}u(y)K_\tau(x,y)\,dy\Bigr)dx \nonumber \\
&\ge \int_{\Omega}w_t(x)\biggl(u(x) \Lambda^{-1}\|j_\tau\|_{L^1(\R^N)}- \Lambda \Bigl(\|u\|_{L^2(\Omega)} \|j_\tau\|_{L^2(\R^N)}+ \|u\|_{L^\infty(\Omega^c)}\|j_\tau\|_{L^1(\R^N)}\Bigr)\biggr)dx \nonumber\\  
&\ge \int_{\Omega}w_t(x)\biggl( c_1(j,\tau) u(x) - c_2(j,\tau) \Bigl(\|u\|_{L^2(\Omega)}+ \|u\|_{L^\infty(\Omega^c)}\Bigr)\biggr)dx \label{E-upper-tau-eq-0}   
\end{align}
with
$$
c_1(j,\tau):= \Lambda^{-1}\|j_\tau\|_{L^1(\R^N)} \quad \text{and}\quad c_2(j,\tau):= \Lambda \sup_{x\in \Omega} \Bigl(\|j_\tau\|_{L^2(\R^N)}+ \|j_\tau\|_{L^1(\R^N)}\Bigr)  >0.
$$
Moreover, we have
\begin{equation}
\label{E-upper-tau-eq-1}  
\cE^{\tau}(u,w_t)=\cE^{\tau}(u-t,w_t)=\cE^{\tau}(w_t,w_t)-\cE^{\tau}((u-t)^-,w_t) \ge \cE^{\tau}(w_t,w_t),
\end{equation}
since
\begin{align*}
\cE^{\tau}((u-t)^-,w_t)&=\int_{\R^N}\int_{\R^N}\Big((u-t)^-(x)-(u-t)^-(y)\Big)w_t(x)K^{\tau}(x,y)\ dydx\\
                       &=-\int_{\R^N}\int_{\R^N}(u-t)^-(y) w_t(x)K^{\tau}(x,y)\ dydx\leq 0.
\end{align*}
Furthermore, we have 
\begin{align}
  \cE^{\tau}(w_t,w_t)  &= \int_{\R^N}\int_{\R^N}w_t^2(x)K^\tau(x,y)dydx - \int_{\R^N}\int_{\R^N}w_t(x)w_t(y)K^\tau(x,y)dydx \nonumber\\
                       &\ge - \int_{\R^N}\int_{\R^N}w_t(x)w_t(y)K^\tau(x,y)dydx \ge - \Lambda \int_{\R^N}\int_{\R^N}w_t(x) w_t(y)j^\tau(x-y)dydx \nonumber  \\
                      &\ge -\Lambda \|w_t\|_{L^2(\Omega)}^2\|j^\tau\|_{L^1(\R^N)}, \label{E-upper-tau-eq-2}
\end{align}
where we used Young's inequality in the last step. Setting
$$
c_3(j,\tau) := \Lambda \|j^\tau\|_{L^1(\R^N)}
$$
we may combine \eqref{E-upper-tau-eq-0}, \eqref{E-upper-tau-eq-1}, and \eqref{E-upper-tau-eq-2} to see that
\begin{align*}
  \cE(u,w_t)&= \cE_\tau(u,w_t)+\cE^{\tau}(u,w_t) \\
  &\ge  \int_{\Omega}w_t(x)\biggl( c_1(j,\tau) u(x) - c_2(j,\tau) \Bigl(\|u\|_{L^2(\Omega)}+ \|u\|_{L^\infty(\Omega^c)}\Bigr)-c_3(j,\tau)w_t(x) \Biggr)dx  
\end{align*}
Also, since $u$ is a weak solution of \eqref{general-op-eq}, we have, similarly as in Case 1,
$$
\cE(u,w_t) = \int_{\Omega}(f+ Wu)(x) w_t(x)\,dx \le 
 \int_{\Omega}\Bigl(\|W^+\|_{L^\infty(\Omega)}u(x)+ \|f\|_{L^\infty(\Omega)}\Bigr)w_t(x)\,dx  
$$
and therefore 
\begin{align}
  0 \ge& \int_{\Omega}w_t(x)\biggl(\Bigl(c_1(j,\tau)-\|W^+\|_{L^\infty(\Omega)}\Bigr) u(x)-c_2(j,\tau)\Bigl(\|u\|_{L^2(\Omega)}+ \|u\|_{L^\infty(\Omega^c)}\Bigr) \nonumber\\
  &-\|f\|_{L^\infty(\Omega)}-c_3(j,\tau)w_t(x) \biggr)dx \nonumber\\
  &\ge
    \int_{\Omega}w_t(x)\biggl(\Bigl(c_1(j,\tau)-\|W^+\|_{L^\infty(\Omega)} -c_3(j,\tau)\Bigr)w_t(x) +t \Bigl(c_1(j,\tau)-\|W^+\|_{L^\infty(\Omega)}\Bigr) \nonumber\\
  &\qquad -(c_2(j,\tau)+1)C_{u,f} \biggr)dx \nonumber\\
       &= (c_1(j,\tau)-\|W^+\|_{L^\infty(\Omega)}-c_3(j,\tau))\|w_t\|_{L^2(\Omega)}^2 + \int_{\Omega}w_t(x)\biggl(t \Bigl(c_1(j,\tau)-\|W^+\|_{L^\infty(\Omega)}\Bigr) \nonumber\\
  &\qquad -(c_2(j,\tau)+1)C_{u,f} \biggr)dx \label{Case-2-final-est}
\end{align}
Now we note that 
$$
c_1(j,\tau) \to c_1(j) \quad \text{and}\quad c_3(j,\tau) \to 0 \qquad \text{as $\tau \to + \infty$,}
$$
the latter fact being a consequence of the assumption $j \in L^1(\R^N)$. Since (\ref{eq:key-comparison-W-j-section-variant}) still holds in this case, We may fix $\tau>0$ with $c_1(j,\tau)-\|W^+\|_{L^\infty(\Omega)}-c_3(j,\tau)>0$ and then choose $t> \frac{(c_2(j,\tau)+1)}{c_1(j,\tau)-\|W^+\|_{L^\infty(\Omega)}}C_{u,f}$ to conclude from (\ref{Case-2-final-est}) that $w_t \equiv 0$ in this case. As in Case 1, we may repeat the argument with $-u$ and $-f$ to get $u\ge -t$ in $\Omega$. Hence $u \in L^\infty(\Omega)$, and (\ref{new-boundedness-unified-version-estimate-section}) follows.

{\bf Case 3.} Finally, we consider the case where $j \not \in L^1(\R^N)$, i.e., \eqref{eq:cond-A2} holds. 

We have to argue differently in this case and therefore need to adjust the notation. For $\tau>0$, we consider the sets 
$$
M_\tau:= \{z \in \R^N \setminus \{0\}\::\: j_s(z) \le \tau \},\quad 
M^\tau:= \{z \in \R^N \setminus \{0\}\::\: j_s(z) > \tau \} = (\R^N \setminus \{0\}) \setminus M_\tau.
$$
Here, $j_s$ is the symmetric part of $j$ as before, which also satisfies (\ref{eq:cond-A1}) and \eqref{eq:cond-A2}. By definition and (\ref{eq:cond-A1}), we have
\begin{equation}
  \label{eq:monotone-convergence-preliminary}
M_\tau \subset M_{\tau'} \; \text{for $\tau \le \tau'$}\quad \text{and}\quad \bigcup_{\tau>0}M_\tau = \R^{N} \setminus \{0\}\; \text{up to a set of measure zero.}
\end{equation}
Since the sets $M_\tau$ and $M^\tau$ are symmetric, it also follows that
$$
\|j 1_{M_\tau}\|_{L^p(\R^N)} = \|j_s 1_{M_\tau}\|_{L^p(\R^N)}< \infty \qquad \text{for $p \in [1,\infty]$ and every $\tau>0$.}
$$
Moreover,
$$
\lim_{\tau \to \infty} \|j 1_{M_\tau}\|_{L^1(\R^N)} = \infty
$$
by (\ref{eq:monotone-convergence-preliminary}), monotone convergence and \eqref{eq:cond-A2}. Writing $j_\tau = j 1_{M_\tau}$ and $j^\tau = j 1_{M^\tau}$ in the following, we may therefore fix $\tau>0$ such that 
\begin{equation}
  \label{eq:delta-such-that}
 \|j_\tau\|_{L^1(\R^N)}\geq  2\Lambda(W_{\infty}+c),
\end{equation}
where $W_{\infty}:=\|W^+\|_{L^{\infty}(\Omega)}$ and $c$ is the constant in \cref{lemma:garding} corresponding to $2\tilde{A}(K,\tilde k)$ in place of $\tilde{A}(K,\tilde k)$. We now split the kernel $K$ into
$$
K_{\tau}(x,y)=\frac{1}{2} 1_{M_\tau}(x-y)K(x,y)
$$
and
$$
K^{\tau}(x,y)=\Bigl(1_{M^\tau}(x-y) + \frac{1}{2} 1_{M_\tau}(x-y)\Bigr)K(x,y) =  K(x,y)-K_{\tau}(x,y).
$$
We note that $K_\tau$ resp. $K^\tau$ satisfies the same comparison condition (\ref{eq:cond-K}) with $\Lambda>0$ and the comparison functions $\frac{1}{2} j_\tau$ and $j^\tau + \frac{1}{2}j_\tau$, respectively. 
Let again $\cE_{\tau}$ resp. $\cE^\tau$ be the bilinear forms associated with the kernels $K_{\tau}$ and $K^\tau$, respectively.
By the same argument as in Case 2, we have 
\begin{equation}
  \label{eq:upper-tau-lower-est}
\cE^{\tau}(u,w_t) \ge \cE^{\tau}(w_t,w_t).
\end{equation}
Next, we split the kernel $K^\tau$ in its symmetric and antisymmetric parts 
$$
K^\tau_s(x,y):=\frac12 \big( K^\tau(x,y) + K^\tau(y,x) \big),\qquad K^\tau_a(x,y) := \frac12 \big( K^\tau(x,y) - K^\tau(y,x) \big),
$$
and we note that
$$
K^\tau_*(x,y) = \Bigl(1_{M^\tau}(x-y) + \frac{1}{2} 1_{M_\tau}(x-y)\Bigr) K_*(x,y) \qquad \text{for $*=s$ or $*=a$} 
$$
due to the symmetry of the sets $M^\tau$ and $M_\tau$. Consequently, we have
$$
K^\tau_s(x,y)\ge \frac{1}{2}K_s(x,y) \quad \text{and}\quad |K^\tau_a(x,y)| \le |K_a(x,y)|\qquad \text{for $x,y \in \R^N$, $x \not = y$.}
$$
Consequently, condition \eqref{eq:cond-K_as2} is satisfied for $K^\tau$ with the function $\frac{\tilde k}{2}$ in place of $\tilde k$ and
\begin{align*}
  \tilde{A}(K^\tau,\frac{\tilde k}{2}) &= 2 \sup_{x \in \R^N}\int_{\R^N} \frac{\bigl(K^\tau_a(x,y)\bigr)^2}{\tilde{k}(x,y)}1_{\{\tilde{k}(x,y) \not = 0\}}(x,y)\,dy \\
   &\le 2 \sup_{x \in \R^N}\int_{\R^N} \frac{\bigl(K_a(x,y)\bigr)^2}{\tilde{k}(x,y)}1_{\{\tilde{k}(x,y) \not = 0\}}(x,y)\,dy=2\tilde{A}(K,\tilde k).
\end{align*}
By our choice of $c$, we thus get from Lemma~\ref{lemma:garding} and (\ref{eq:upper-tau-lower-est}) the estimate 
\begin{equation}
  \label{eq:upper-tau-lower-est-1}
  \cE^{\tau}(u,w_t) \ge \cE^{\tau}(w_t,w_t) \ge - c \|w_t\|_{L^2(\Omega)}^2.
\end{equation}  
Similarly as in Case 2, we have
\begin{align}
  &\cE_\tau(u,w_t)=\int_{\R^N}w_t(x)\Bigl(u(x)\int_{\R^N}K_\tau(x,y)\,dy - \int_{\R^N}u(y)K_\tau(x,y)\,dy\Bigr)dx \nonumber \\
 &=\frac{1}{2} \int_{\R^N}w_t(x)\Bigl(u(x)\int_{\R^N}1_{M_\tau}(x-y) K(x,y)\,dy - \int_{\R^N}u(y)1_{M_\tau}(x-y) K(x,y)\,dy\Bigr)dx \nonumber \\
&\ge \frac{1}{2}\int_{\Omega}w_t(x)\biggl(u(x) \Lambda^{-1}\|j_\tau\|_{L^1(\R^N)}- \Lambda \Bigl(\|u\|_{L^2(\Omega)} \|j_\tau\|_{L^2(\R^N)}+ \|u\|_{L^\infty(\Omega^c)}\|j_\tau\|_{L^1(\R^N)}\Bigr)\biggr)dx \nonumber\\  
&\ge \frac{1}{2}\int_{\Omega}w_t(x)\biggl( c_1(j,\tau) u(x) - c_2(j,\tau) \Bigl(\|u\|_{L^2(\Omega)}+ \|u\|_{L^\infty(\Omega^c)}\Bigr)\biggr)dx \label{E-upper-tau-eq-0-case-3}   
\end{align}
again with 
$$
c_1(j,\tau):= \Lambda^{-1}\|j_\tau\|_{L^1(\R^N)} \quad \text{and}\quad c_2(j,\tau):= \Lambda \sup_{x\in \Omega} \Bigl(\|j_\tau\|_{L^2(\R^N)}+ \|j_\tau\|_{L^1(\R^N)}\Bigr)  >0.
$$
Combining (\ref{eq:upper-tau-lower-est-1}) and (\ref{E-upper-tau-eq-0-case-3}), we deduce that 
\begin{align*}
  &0 =  \cE(u,w_t)-\int_{\Omega}(W(x) u(x)+f(x))w_t(x)\ dx\\
  &= \cE_\tau(u,w_t)+ \cE^{\tau}(u,w_t)-\int_{\Omega}(W(x) u(x)+f(x))w_t(x)\ dx\\
  &\ge \int_{\Omega}w_t(x)\biggl( \frac{c_1(j,\tau)}{2} u(x) - \frac{c_2(j,\tau)}{2} \Bigl(\|u\|_{L^2(\Omega)}+ \|u\|_{L^\infty(\Omega^c)}\Bigr)\\
  &\qquad \qquad \qquad\qquad \qquad -\|W\|_{L^\infty(\Omega)}u - \|f\|_{L^\infty(\Omega)}- c w_t(x)\biggr)dx\\
&\ge \int_{\Omega}w_t(x)\Bigl( c\bigl(u(x) - w_t(x)\bigr)- \bigl(c_2(j,\tau)+1\bigr)C_{u,f}\Bigr)dx, 
\end{align*}
where we have set 
$$
C_{u,f}:= \|f\|_{L^\infty(\Omega)} + \|u\|_{L^2(\Omega)}+ \|u\|_{L^\infty(\Omega^c)}.
$$
as before and used that $\frac{c_1(j,\tau)}{2} \ge \|W\|_{L^\infty(\Omega)}+c$ by (\ref{eq:delta-such-that}). Since $w_t(x)\bigl(u(x)-w_t(x))\bigr)= t w_t(x)$ for $x \in \Omega$, we thus obtain 
$$
0 \ge \int_{\Omega}w_t(x)\Bigl( c t - \bigl(c_2(j,\tau)+1\bigr)C_{u,f}\Bigr)dx \ge  \int_{\Omega}w_t(x)\,dx
$$
if $t\ge \frac{1+\bigl(c_2(j,\tau)+1\bigr)C_{u,f}}{c}$. For such $t$ we thus conclude that $w_t \equiv 0$ and therefore $u\leq t$ in $\Omega$. Repeating the argument with $-u$ and $-f$, we get $u\ge -t$ in $\Omega$. Hence $u \in L^\infty(\Omega)$, and (\ref{new-boundedness-unified-version-estimate-section}) follows again.

To complete the proof, we finally need to show that the term $\|u\|_{L^2(\Omega)}$ can be dropped in \eqref{new-boundedness-unified-version-estimate-section} if $K$ is symmetric and $W \le 0$ in $\Omega$. For this, we argue by contradiction and assume that the reduced estimate
\begin{equation}
  \label{eq:boundedness-est-section-2}
\|u\|_{L^{\infty}(\Omega)}\leq C\Bigl(\|f\|_{L^{\infty}(\Omega)} +\|u\|_{L^\infty(\Omega^c)} \Bigl).
\end{equation}
is not valid with any constant independent of $f$ and $u$. Then there exists sequences $(f_n)_n \in L^\infty(\Omega)$ and $(u_n)_n \in V(\Omega|\R^N) \cap L^\infty(\Omega^c)$ with the property that, for any $n$, the function 
$u_n$ solves
\begin{equation}
  \label{eq:contradiction-equation-n}
\op u_n = W(x)u_n + f_n \qquad \text{weakly in $\Omega$}
\end{equation}
and
\begin{equation}
  \label{eq:contradiction-est}
\|u_n\|_{L^\infty(\Omega)} \ge n \Bigl(\|u_n\|_{L^\infty(\Omega^c)} + \|f_n\|_{L^\infty(\Omega)}\Bigr)
\end{equation}
Without loss of generality, we may assume $\|u_n\|_{L^{\infty}(\Omega)}=1$ for all $n\in \N$ so that we have
$$
\|u_n\|_{L^\infty(\Omega^c)} + \|f_n\|_{L^\infty(\Omega)} \le \frac{1}{n}\|u_n\|_{L^{\infty}(\Omega)}=\frac1n.
$$
This entails
\begin{equation}
  \label{eq:zero-limit-boundedness2}
\|u_n\|_{L^\infty(\Omega^c)}\leq \frac1n\quad\text{and}\quad   \|f_n\|_{L^\infty(\Omega)}\leq \frac1n.
\end{equation}
Therefore, the function $w_n=(u-\frac1n)^+$ belongs to $V_{\Omega}(\R^N)$ and we have
\begin{align}
\cE(u_n,w_n)&=\int_{\Omega}(W(x)u_n+f_n)w_n\, dx\leq \|f_n\|_{L^{\infty}(\Omega)}\|w_n\|_{L^{\infty}(\Omega)}|\Omega|\notag\\
&\leq \frac1n\Big(\|u_n\|_{L^{\infty}(\Omega)}+\frac1n\Big)|\Omega|\leq \frac2n|\Omega|.\label{eq:zero-limit-boundedness3}
\end{align}
On the other hand, since $K=K_s$ is symmetric (and \eqref{eq:cond-A2} implies \eqref{eq:poincare}), \cref{lemma:poincare} implies
$$
\cE(u_n,w_n)=\cE(u_n-\frac1n,w_n)\geq \cE(w_n,w_n)\geq \Lambda_1(\Omega)\|w_n\|_{L^2(\Omega)}.
$$
The inequality \eqref{eq:zero-limit-boundedness3} thus entails
$$
\lim_{n\to\infty}\|w_n\|_{L^2(\Omega)}=0.
$$
Replacing $u_n$ with $-u_n$ we have
\begin{equation}
  \label{eq:zero-limit-boundedness3-1}
\lim_{n\to\infty}\|u_n\|_{L^2(\Omega)}\leq \lim_{n\to\infty}\Big(\|(u_n-\frac1n)^+\|_{L^2(\Omega)}+\|(u_n-\frac1n)^-\|_{L^2(\Omega)}+\frac{|\Omega|^{\frac12}}{n}\Big)=0.
\end{equation}
On the other hand, (\ref{new-boundedness-unified-version-estimate-section}) combined with \eqref{eq:zero-limit-boundedness2} and \eqref{eq:zero-limit-boundedness3-1} implies
\begin{align}
1=\lim_{n\to\infty}\|u_n\|_{L^{\infty}(\Omega)}\leq \lim_{n\to\infty}C\Big(\|f_n\|_{L^{\infty}(\Omega)}+\|u_n\|_{L^{\infty}(\Omega^c)}+\|u_n\|_{L^2(\Omega)}\Big)=0
\end{align}
A contradiction. Hence (\ref{eq:boundedness-est-section-2}) holds, and the proof is finished.
\end{proof}

\begin{remark}
\label{remark-new-section-boundedness}
  \begin{enumerate}
	\item[(i)] The proof of \cref{new-boundedness-unified-version-section} actually shows the following: Under the assumptions on $K$, $j$, and $\Omega$, if $u\in V(\Omega,\R^N)$ is a function with $u^+\in L^{\infty}(\Omega^c)$ such that $\op u\leq Au+B$ weakly in $\Omega$ for some $A,B\geq0$, then $u^+\in L^{\infty}(\Omega)$ and there is $C>0$, not depending on $B$ or $u$, such that
	$$
	\|u^+\|_{L^{\infty}(\Omega)}\leq C\Big(B+\|u\|_{L^2(\Omega)}+\|u^+\|_{L^{\infty}(\Omega^c)}\Big).
	$$
  \item[(ii)] The assumption $u \in L^\infty(\Omega^c)$ in \cref{new-boundedness-unified-version-section} can be weakened, as it is sufficient to assume that $u \in L^\infty(\Omega' \setminus \Omega)$ for some bounded open set $\Omega' \subset \R^N$ with $\Omega \Subset  \Omega'$, and that $u$ has a bounded $j$-tail in $\Omega'$. 
  In this case, (\ref{new-boundedness-unified-version-estimate-section}) needs to be replaced by
\begin{equation}
  \label{eq:boundedness-est-section-1-general}
\|u\|_{L^\infty(\Omega)} \le C\Bigl( \|f\|_{L^\infty(\Omega)} + \|u\|_{L^\infty(\Omega' \setminus \Omega)} + \|u\|_{L^2(\Omega)} +T(u,\Omega,\Omega')\Bigr)    
  \end{equation}
  with $C>0$ depending also on $\Omega'$. Here, if the kernel $K$ is symmetric and $W\leq 0$, then (A2) can be replaced by the weaker assumption \eqref{eq:poincare} and the term $\|u\|_{L^2(\Omega)}$ can be dropped. The fact that (A2) can be replaced by \eqref{eq:poincare} follows from \eqref{eq:delta-such-that} noting that the right-hand side in this case is $0$ and $\delta$ can be chosen arbitrary, while the Poincar\'e inequality remains true.\\
   The generalization concerning the boundedness of $u$ in $\Omega^c$ can be derived a posteriori from \cref{new-boundedness-unified-version-section}.  Indeed, starting from a solution $u \in L^\infty(\Omega' \setminus \Omega) \cap V(\Omega|\R^N)$ of \eqref{general-op-eq} satisfying $T(u,\Omega,\Omega')<\infty$, we may consider an arbitrary extension $\tilde u \in L^\infty(\R^N)$ of $u\bigl|_{\Omega'}$ with $\|\tilde u\|_{L^\infty(\Omega^c)}=\|u\|_{L^\infty(\Omega' \setminus \Omega)}$. A straightforward estimate then shows that $\tilde u \in V(\Omega|\R^N)$, and $\tilde u$ is a weak solution of $\op \tilde u = W(x) \tilde u + f + \tilde f$ in $\Omega$ with
  $$
  \tilde f(x) = \int_{\R^N \setminus \Omega'}(u(y)-\tilde u(y))K(x,y)\,dy \qquad \text{for $x \in \Omega$.}
  $$
  Since, by (\ref{eq:cond-K}), we have 
  $$
  \|\tilde f\|_{L^\infty(\Omega)} \le \Lambda \bigl( T(u,\Omega,\Omega')+T(\tilde u,\Omega,\Omega')\bigr)
  $$
  and
  $$
  T(\tilde u,\Omega,\Omega') \le \tilde C \|\tilde u\|_{L^\infty(\R^N)} \qquad \text{with}\quad \tilde C= \sup_{x \in \Omega}\int_{\R^N \setminus \Omega'}j(y-x)\,dy,
  $$
  we get \eqref{eq:boundedness-est-section-1-general} by applying \eqref{new-boundedness-unified-version-estimate-section} to $\tilde u$ and $f+ \tilde f$.
\item[(iii)] If in the situation of Theorem~\ref{new-boundedness-unified-version-section} the function $u$ satisfies a priori
$$
\cE(u,v)=\int_{\Omega} (c(x)u(x)+f(x))v(x)\ dx\quad\text{for all $v\in V_{\Omega}(\R^N)$,}
$$
then $\Omega$ can be an arbitrary open bounded set.
\end{enumerate}
\end{remark}

\section{The growth lemma} \label{sec:growth-lemma}

The main tool in the proof of \cref{thm:continuity} is a growth lemma, which we establish in this section. Our version is an adaptation of the one in \cite{Sil06} to the framework of variational solutions. The idea to replace the Lebesgue measure by the Borel measure $\mu_j$ on $\R^N$ defined through
$$
\mu_j(A) = \int_{A} j(z)\,dz \qquad (A \subset \R^N \text{ a Borel set} ) \,. 
$$   
has been developed in \cite{KaMi17}. Throughout this section we assume \eqref{eq:cond-K}, \eqref{eq:cond-K_as2}, and $\gamma\in(0,1)$.

\begin{thm}[Growth Lemma] \label{thm:growth-lemma} Assume $\cref{eq:cond-A1}, \cref{eq:cond-A2}$. Assume that either \cref{eq:cond-A-3-1} or \cref{eq:cond-A-3-2} holds for some $R_0>0$. 
	Then there exist constants $\theta,\eta \in (0,1)$ and a continuous and non-increasing function 
	$$
	h: (0,R_0) \to (0,\infty) \qquad \text{with}\quad \lim_{r \to 0}h(r)=\infty
	$$
	with the following property: For all $R \in (0,R_0)$, $v_\infty > 0$ there exists $r \in (0,R)$ such that for every $v \in V_{\loc}(B_R | \R^N) \cap L^\infty(\R^N)$ satisfying 
	\begin{alignat}{2}
		\|v\|_{L^\infty(\R^N)} &\le v_\infty \quad &&\text{ in }  \R^N  \label{eq:cond-v-1} \\  
		v \le 1 \text{ in } B_{R}  \quad \text{and}\quad  \op  v &\le h(R)  \quad &&\text{ weakly in } B_{R}  \label{eq:cond-v-2}
	\end{alignat}
	and
	\begin{align}
		\label{eq:key-estimate}
		\mu_{j}\Bigl((B_{R} \setminus B_{r}) \cap \{v \le 0\}\Bigr) \ge \frac{1}{2} 
		\mu_{j}(B_{R} \setminus B_{r})
	\end{align}
	we have 
	$$
	v \le 1 -\theta \qquad \text{almost everywhere in } B_{\eta r} \,.
	$$
\end{thm}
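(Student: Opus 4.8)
The plan is to test the weak subsolution inequality $\op v\le h(R)$ against truncations $(v-\kappa)^{+}\tau^{2}$ (admissible test functions by \cref{lem:local-cutoff}, since $\tau$ is smooth and compactly supported in $B_{R}$), where $\kappa$ ranges over a geometric sequence of levels increasing to $1-\theta$ and $\tau$ is a cut-off supported in a ball shrinking towards $B_{\eta r}$, and to run a De~Giorgi--type iteration in which, following \cite{KaMi17}, the Lebesgue measure is systematically replaced by $\mu_{j}$. One may first assume $j$ is even: passing from $j$ to $j_{s}(h)=\tfrac12(j(h)+j(-h))$ only worsens the constant in \eqref{eq:cond-K} (as $K_{s}\asymp j_{s}$), preserves \eqref{eq:cond-A1}, \eqref{eq:cond-A2}, \eqref{eq:cond-A-3-1} and \eqref{eq:cond-A-3-2} up to constants, and keeps \eqref{eq:key-estimate} in the form that the good set $G:=(B_{R}\setminus B_{r})\cap\{v\le 0\}$ carries a fixed fraction of $\mu_{j}(B_{R}\setminus B_{r})$; one may also assume $R<R_{0}/2$. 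The constants $\theta,\eta\in(0,1)$ and the profile $h$ will depend only on the structural data ($N$, $\gamma$, $\Lambda$, $\tilde A(K,\tilde k)$, and the constants in \eqref{eq:cond-A-3-1} resp.\ \eqref{eq:cond-A-3-2}), while $r\in(0,R)$ is chosen last, depending on $R$ and $v_{\infty}$, small enough that $\mu_{j}(B_{R}\setminus B_{r})$ is as large as required below; this is possible because \eqref{eq:cond-A1} forces $\int_{B_{1}^{c}}j<\infty$, so that \eqref{eq:cond-A2} gives $\int_{B_{\rho}}j=\infty$ for every $\rho>0$.

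The geometric core is a mass-transfer estimate: there are structural $\sigma_{*},c_{*}\in(0,1)$ with
\[
\int_{E}j(x-y)\,dy\ \ge\ c_{*}\,\mu_{j}(E)\qquad\text{for all Borel }E\subseteq B_{R}\setminus B_{r}\text{ and all }x\in B_{\sigma_{*}r}.
\]
Under \eqref{eq:cond-A-3-1} this is immediate, since that condition forces $j(z)\asymp j(z')$ whenever $|z|\asymp|z'|$ together with a radial doubling, so that $j(x-y)\asymp j(y)$ uniformly for $|x|\le\sigma_{*}r\le\sigma r$ and $y\in B_{R}\setminus B_{r}$. Under \eqref{eq:cond-A-3-2} one writes $\int_{E}j(x-y)\,dy=\mu_{j}(x-E)$ and estimates
\[
\big|\mu_{j}(x-E)-\mu_{j}(E)\big|\ \le\ \int_{-E}\big|j(z+x)-j(z)\big|\,dz\ \le\ |x|\,\|Dj\|\big(B_{2R}\setminus\overline{B_{r/2}}\big)\ \le\ \tfrac{2M|x|}{r}\int_{B_{r/2}^{c}}j
\]
by the $BV$-translation bound \eqref{eq:bv-loc-key-property} and \eqref{eq:cond-A-3-2}. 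A pigeonhole argument --- which uses only $\gamma<1$, so that $\mu_{j}(B_{R}\setminus B_{\rho})=O(\rho^{-\gamma})$ cannot keep doubling under $\rho\mapsto\rho/2$ --- produces arbitrarily small $r$ with $\mu_{j}(B_{r}\setminus B_{r/2})\le\mu_{j}(B_{R}\setminus B_{r})$, hence $\int_{B_{r/2}^{c}}j\le C\mu_{j}(B_{R}\setminus B_{r})$; taking $\sigma_{*}$ small then makes the right-hand side above at most a small fraction of $\mu_{j}(B_{R}\setminus B_{r})$, and combining with $\mu_{j}(E)\ge\text{const}\cdot\mu_{j}(B_{R}\setminus B_{r})$ (applied to $E=G$) yields the estimate.

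Testing $\op v\le h(R)$ with $(v-\kappa)^{+}\tau^{2}$ and using the standard nonlocal algebraic inequality --- splitting $\cE$ into its symmetric part (controlled by a Caccioppoli estimate) and its antisymmetric part (absorbed via Young's inequality and \eqref{eq:cond-K_as2}, exactly as in \cref{lemma:garding} and the proof of \cref{new-boundedness-unified-version-section}) --- yields a bound
\[
\big[(v-\kappa)^{+}\tau\big]^{2}_{V}\ \le\ C\Big(h(R)\!\int(v-\kappa)^{+}\tau^{2}+\theta^{2}r^{-\gamma}\big|\{v>\kappa\}\cap\supp\tau\big|+v_{\infty}\,r^{-\gamma}|\supp\tau|\Big),
\]
where the cut-off term is finite precisely because $\gamma<2$. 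On the other hand, since $(v-\kappa)^{+}$ vanishes on the $\mu_{j}$-large set $G$, the mass-transfer estimate upgrades to a Poincar\'e-type inequality
\[
\mu_{j}(B_{R}\setminus B_{r})\int_{B_{\sigma_{*}r}}\big((v-\kappa)^{+}\tau\big)^{2}\,dx\ \le\ c_{*}^{-1}\big[(v-\kappa)^{+}\tau\big]^{2}_{V}.
\]
Iterating the combination of these two inequalities along levels increasing to $1-\theta$ and cut-offs localizing towards $B_{\eta r}$ (with $\eta=\sigma_{*}/2$), and using $v\le 1$ in $B_{R}$ to bound the initial excess by a quantity of order $\theta^{2}$, should give $v\le 1-\theta$ a.e.\ in $B_{\eta r}$ provided $\theta$ is chosen small enough (initial excess below the iteration threshold) and $r$ small enough that $\mu_{j}(B_{R}\setminus B_{r})$ dominates $h(R)$, $v_{\infty}$ and the accumulated cut-off constants. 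Finally one may take $h(R):=\mu_{j}(B_{R_{0}/2}\setminus B_{R})$, which is continuous, non-increasing, and satisfies $\lim_{R\to 0}h(R)=\mu_{j}(B_{R_{0}/2})=\infty$ by \eqref{eq:cond-A2}.

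I expect the main obstacle to be closing this iteration for a weakly singular $j$: there is no fractional Sobolev embedding to drive it, so the gain has to come entirely from the $\mu_{j}$-Poincar\'e inequality associated with the large zero set together with the scaling structure encoded in \eqref{eq:cond-A-3-1}/\eqref{eq:cond-A-3-2} --- which is precisely why the measure $\mu_{j}$ is introduced and why $r$ is only pinned down a posteriori (so that $\mu_{j}(B_{R}\setminus B_{r})$ can be made as large as needed, using \eqref{eq:cond-A2}). A secondary technical point is the mass-transfer estimate under \eqref{eq:cond-A-3-2}, where the pigeonhole choice of $r$ and the $BV$-translation bound \eqref{eq:bv-loc-key-property} have to be combined carefully.
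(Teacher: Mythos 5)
Your proposal differs substantially from the paper's proof in its overall architecture: you propose a De~Giorgi--type iteration (truncations $(v-\kappa)^+\tau^2$ along increasing levels and shrinking cut-offs), whereas the paper follows Silvestre's single-step \emph{barrier} argument. There the function $u(x)=v(x)+\frac1a b_{\vartheta r}(x)-d_a$ (with $b_{\vartheta r}$ a fixed smooth bump) is shown to be nonpositive in $B_{\vartheta r}$ by one energy test with $u^+1_{B_{\vartheta r}}$, combined with the Gårding inequality and the ``good set'' mass bound \eqref{eq:key-estimate}; no iteration at all is needed. Your ``mass-transfer estimate'' and the pigeonhole choice of a good radius $r$ are genuinely the same ingredients as the paper's \eqref{growth-1-est-2}/\eqref{growth-1-est-2-II} and \cref{prop:key-estimate}, so you have correctly identified the geometric heart of the matter.

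The genuine gap lies exactly where you flag it yourself: you have no mechanism to make the iteration close. A De~Giorgi scheme without a Sobolev embedding requires a superlinear recursion $A_{n+1}\lesssim C\,A_n^{1+\varepsilon}$, but the Caccioppoli/Poincaré pair you write down is only linear in $A_n$ and carries a constant term $v_\infty r^{-\gamma}|\supp\tau|$ that does not decay along the iteration; so $\|v\|$ at the limiting level cannot be forced to zero from it. Moreover, the balance you need between the cut-off error ($\sim L(\vartheta r)$, roughly $r^{-\gamma}$) and the Poincaré gain ($\mu_j(B_R\setminus B_r)=L(r,R)$, which \eqref{eq:cond-A2} only guarantees $\to\infty$, possibly as slowly as $\log(1/r)$) cannot be established by ``taking $r$ small''; it holds only along the sparse sequence of special radii $r_k$ that \cref{prop:key-estimate} isolates, and a barrier argument needs just one such $r$ whereas an iteration would need them at all scales $r/2^k$. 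That is precisely why the paper uses the barrier. Two smaller points: the opening symmetrization ``replace $j$ by $j_s$'' is not harmless here, because when \eqref{eq:cond-A-3-2} (not \eqref{eq:cond-A-3-1}) is in force $j$ may vanish where $j_s$ does not, and then the lower bound in \eqref{eq:cond-K} for the \emph{given} kernel $K$ fails against $j_s$; the paper works directly with $j$ and $K$. And your candidate $h(R)=\mu_j(B_{R_0/2}\setminus B_R)$ does not encode the cut-off error from the barrier, which is why the paper takes $h(r)=c\inf_{0<s\le r}L(s)$ with $L$ as in \eqref{eq:def-L-func}, tuned in \eqref{eq:h-func-case-1}/\eqref{eq:h-func-case-2} so that the barrier's contribution $\frac{c_b\Lambda}{a}L(\vartheta r)$ is absorbed.
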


This section is devoted to the proof of \cref{thm:growth-lemma}. The first step of the proof is to derive certain key estimates for the quantities 
\begin{align}
	L(r, R):= \int_{B_R \setminus B_r} j(z)\,dz, \qquad  L(r):= \frac{1}{r} \int_{B_r} |z| j(z)\,dz + L(r,\infty) \label{eq:def-L-func}
\end{align}
which are defined for $r>0$ and $R>r$. Recall that we consider a kernel $j$ satisfying conditions \cref{eq:cond-A1} and \cref{eq:cond-A2}. Note that the function $r \mapsto L(r)$ is non-increasing since for $0 < r \leq s $ we have
\begin{align*}
	L(s) &= \frac{1}{s} \int_{B_r} |z| j(z)\,dz  + \frac{1}{s} \int_{B_s \setminus B_r} |z| j(z)\,dz + L(s,+\infty) \\
	&\leq \frac{1}{r} \int_{B_r} |z| j(z)\,dz  +   \int_{B_s \setminus B_r} j(z)\,dz + L(s,+\infty) = L(r) \,.
\end{align*}

The following proposition is a key tool in the proof of \cref{thm:growth-lemma}. To state this proposition we first note that \eqref{eq:cond-A1} implies that 
	\begin{align} \label{eq:cond-B}\tag{B}  \int_{B_r}|z| j(z) d z = \cO (r^\alpha) \quad \text{ as $r \to 0\,$ with $\alpha = 1-\gamma$.} 
	\end{align}

\begin{prop}\label{prop:key-estimate}
	Assume \eqref{eq:cond-A1}, \eqref{eq:cond-A2}. Let $K_0> \frac{2}{2^\alpha-1}$,
	where $\alpha>0$ is as in \eqref{eq:cond-B}.  Then there exists a strictly decreasing sequence of radii $r_k>0$, $k \in \N$, with $\lim \limits_{k \to \infty}r_k =0$ and the following property: For every $R >0$ there exists $k_0=k_0(R,K_0) \in \N$ such that 
	\begin{align}
	\label{eq:r-k-est}
	r_k < R \quad \text{and}\quad L(\vartheta r_k)\leq 2\Bigl( \frac{K_0}{\vartheta}+1 \Bigr) L(r_k,R) \qquad \text{for $k \ge k_0$, $0 < \vartheta \leq 1$.}
	\end{align}
\end{prop}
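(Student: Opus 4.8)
The plan is to reduce the statement to two ingredients: a crude sub-scaling bound for $L$, and the existence of a sequence of radii along which the \emph{local part} $\frac1r\int_{B_r}|z|j\,dz$ of $L(r)$ is dominated by the \emph{tail part} $L(r,\infty):=\int_{B_r^c}j(z)\,dz$. Writing $\phi(r):=\int_{B_r}|z|j(z)\,dz$, so that $L(r)=\phi(r)/r+L(r,\infty)$, I would first record the elementary estimate
\[
L(\vartheta r)\le \frac{2}{\vartheta}\,\frac{\phi(r)}{r}+L(r,\infty)\qquad\text{for }r>0,\ \vartheta\in(0,1],
\]
obtained by splitting $L(\vartheta r)$ at $|z|=\vartheta r$ and using $1\le|z|/(\vartheta r)$ on $B_r\setminus B_{\vartheta r}$ together with $B_{\vartheta r}\subset B_r$. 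From \eqref{eq:cond-A1} (splitting at $|z|=1$) one gets $L(r,\infty)\le Cr^{-\gamma}$ for small $r$, so $t\mapsto L(t,\infty)$ is integrable near $0$ because $\gamma<1$; also $\phi(r)\le Cr^{\alpha}$ near $0$ by \eqref{eq:cond-B}. Using $|z|=\int_0^{|z|}dt$ and Fubini gives the identity $\phi(r)=\int_0^r L(t,r)\,dt=\Psi(r)-rL(r,\infty)$, where $\Psi(r):=\int_0^r L(t,\infty)\,dt$ is locally absolutely continuous with $\Psi'=L(\cdot,\infty)$ a.e.; moreover $\Psi(r)>0$ for all $r>0$, since otherwise $L(\cdot,\infty)\equiv 0$ near $0$ (by monotonicity), forcing $j\equiv 0$ against \eqref{eq:cond-A2}.

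The core step is to show that there is a strictly decreasing sequence $r_k\downarrow 0$ (depending only on $j$ and $K_0$) with $\phi(r_k)\le (K_0-1)\,r_k\,L(r_k,\infty)$. I would argue by contradiction: if no such sequence exists, then $\phi(r)>(K_0-1)rL(r,\infty)$ for all $r$ in some interval $(0,\delta)$, which by the identity $\phi=\Psi-r\Psi'$ is equivalent to $\Psi(r)>K_0\,r\,\Psi'(r)$, i.e. $(\log\Psi)'(r)<\frac{1}{K_0 r}$ for a.e. $r\in(0,\delta)$. Integrating over $[r,\delta]$ yields $\Psi(r)\ge c\,r^{1/K_0}$ on $(0,\delta)$ with $c>0$. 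On the other hand $\Psi(r)\le C\int_0^r t^{-\gamma}\,dt=\tfrac{C}{\alpha}r^{\alpha}$ near $0$. Since $2^{\alpha}-1\le\alpha$ for $\alpha\in(0,1)$ (convexity of $t\mapsto 2^t$), the hypothesis $K_0>\tfrac{2}{2^{\alpha}-1}$ gives $K_0>\tfrac{2}{\alpha}>\tfrac1\alpha$, hence $1/K_0<\alpha$, and $c\,r^{1/K_0}\le\tfrac{C}{\alpha}r^{\alpha}$ cannot hold as $r\to 0$ — a contradiction.

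To conclude, I fix the sequence $(r_k)$ just produced. Given $R>0$, choose $k_0=k_0(R,K_0)$ so that $r_k<R$ and $L(r_k,\infty)\ge K_0\,L(R,\infty)$ for all $k\ge k_0$; this is possible because $r_k\to 0$ forces $L(r_k,\infty)\to\int_{\R^N} j=\infty$ by \eqref{eq:cond-A2}. For such $k$ and any $\vartheta\in(0,1]$: from $L(r_k,\infty)=L(r_k,R)+L(R,\infty)\le L(r_k,R)+\tfrac1{K_0}L(r_k,\infty)$ one obtains $L(r_k,\infty)\le\tfrac{K_0}{K_0-1}L(r_k,R)$; combined with the core estimate this gives $\phi(r_k)/r_k\le K_0\,L(r_k,R)$; and then the sub-scaling bound yields
\[
L(\vartheta r_k)\le\frac{2K_0}{\vartheta}\,L(r_k,R)+\frac{K_0}{K_0-1}\,L(r_k,R)\le 2\Bigl(\frac{K_0}{\vartheta}+1\Bigr)L(r_k,R),
\]
where the last inequality uses $\tfrac{K_0}{K_0-1}\le 2$, i.e. $K_0\ge 2$, which again holds since $\tfrac{2}{2^{\alpha}-1}>2$.

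I expect the main obstacle to be the core step: the sub-scaling bound and the final bookkeeping are routine, whereas controlling $\phi(r)/r$ by $L(r,\infty)$ along a sequence hinges on converting the assumed pointwise failure into the differential inequality $(\log\Psi)'<1/(K_0r)$ through the Fubini identity $\phi=\Psi-r\Psi'$, after which the resulting lower bound $\Psi(r)\gtrsim r^{1/K_0}$ clashes with the a priori bound $\Psi(r)\lesssim r^{\alpha}$ coming from \eqref{eq:cond-A1} precisely because $K_0>1/\alpha$. Minor edge cases, such as $L(R,\infty)=0$ when $j$ is supported in $B_R$, are trivial.
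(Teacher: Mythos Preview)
Your proof is correct, but the core step takes a genuinely different route from the paper's.

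Both arguments aim at a sequence $r_k\downarrow 0$ along which the local moment $m(r)=\phi(r)=\int_{B_r}|z|j(z)\,dz$ is controlled by an annular or tail mass. The paper compares $m(r)$ with the \emph{dyadic annular} mass $L(r,2r)$ and argues discretely: assuming $m(r)>K_0\,r\,L(r,2r)$ for all small $r$, one gets the doubling bound $m(r)\ge (1+\tfrac{2}{K_0})^{-1}m(2r)$, and iteration along the scales $2^{-n}r_*$ contradicts $m(r)=O(r^\alpha)$ precisely when $1+\tfrac{2}{K_0}<2^\alpha$, i.e.\ $K_0>\tfrac{2}{2^\alpha-1}$. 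You instead compare $\phi(r)$ with the \emph{full tail} $L(r,\infty)$ via the Fubini identity $\phi=\Psi-r\Psi'$, turn the failure of the good-radii property into the differential inequality $(\log\Psi)'<\tfrac{1}{K_0 r}$, and integrate to $\Psi(r)\gtrsim r^{1/K_0}$, which clashes with $\Psi(r)\lesssim r^\alpha$ once $K_0>1/\alpha$. The final assembly is essentially the same in both proofs (bounding $L(\vartheta r_k,r_k)$ by the moment, and $L(r_k,\infty)$ by $2L(r_k,R)$ using \eqref{eq:cond-A2}).

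What each approach buys: the paper's dyadic argument is slightly more elementary (no absolute continuity or integral identities needed) and uses the hypothesis $K_0>\tfrac{2}{2^\alpha-1}$ sharply. Your continuous argument is clean and, as you implicitly note, needs only the weaker constraint $K_0>\max\{2,\,1/\alpha\}$; since $2^\alpha-1\le\alpha$ on $(0,1)$, this is strictly less restrictive than the paper's threshold. Either way the constant is irrelevant for the downstream application, but your observation that only $K_0>1/\alpha$ is really needed for the contradiction step is a small sharpening.
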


\begin{proof}
	Let $\alpha>0$ be as in assumption \cref{eq:cond-B} and $K_0$ any number satisfying $K_0 > \frac{2}{2^\alpha-1}$. Set $m(r) = \displaystyle \int_{B_r}|z| j(z)dz$ for $r>0$. 
	First, we claim that there exists a strictly decreasing sequence of radii $r_k>0$, $k \in \N$, with $\lim \limits_{k \to \infty}r_k =0$ and  
	\begin{align}
	\label{eq:r-k-est-simple-1}
	m(r_k) \leq  K_0\, r_k L(r_k, 2r_k) \qquad \text{for all $k \in \N$} \,
	\end{align}
	implying that, for every $k \in \N$ and every $\vartheta \in (0,1]$,
	\begin{align}\label{eq:r-k-est-simple-L}
	L(\vartheta r_k, r_k) = \int\limits_{B_{r_k} \setminus B_{\vartheta r_k} }  j(z) d z \leq  \frac{1}{\vartheta r_k} \int\limits_{B_{r_k} \setminus B_{\vartheta r_k} } |z| j(z) d z  \leq \frac{K_0}{\vartheta} \, L(r_k, 2r_k)\,. 
	\end{align}
	
	Suppose by contradiction that \cref{eq:r-k-est-simple-1} does not hold true. Then there exists $r_*>0$ such that for every $r \in (0,r_*]$  
	\begin{align}
	\label{r-star-est-1}
	m(r)>  K_0\, r L(r, 2r) \,.
	\end{align}
	
	From (\ref{r-star-est-1}) we deduce 
	$$
	m(r) = m(2r)- \int_{B_{2r} \setminus B_{r}}|z| j(z)d z \ge 
	m(2r)- 2r L(r, 2r) \ge m(2r)- \frac{2}{K_0} m(r)\quad \text{for $r \in (0,r_*]$}
	$$
	and thus 
	\begin{align}
	\label{eq:r-est-combined}
	m(r) \ge \Bigl(1+ \frac{2}{K_0}\Bigr)^{-1} m(2r) \quad \text{for $r \in (0,r_*]$.}
	\end{align}

	Inductively, it thus follows 
	$$
	m(2^{-n}r_*) \ge \Bigl(1+ \frac{2}{K_0}\Bigr)^{-n}m(r_*) \,,
	$$
	whereas by condition \eqref{eq:cond-B} we have 
	$$
	m(2^{-n}r_*)  = \cO (2^{-\alpha n}) \qquad \text{as $n \to \infty$.}
	$$
	Since $m(r_*)>0$ by assumptions \eqref{eq:cond-A1}, \eqref{eq:cond-A2} and $1+ \frac{2}{K_0} < 2^\alpha$ by our choice of $K_0$, we obtain a contradiction. Hence 
	\eqref{eq:r-k-est-simple-1} holds. 
	
	Next, let $R>0$ be given. Since $\int_{B_R}j(z)dz = \infty$ by assumptions \eqref{eq:cond-A1} and \eqref{eq:cond-A2}, there exists 
	$r^* \in (0, R)$ such that $L(R, +\infty) \le L(r, R)$ for $r \le r^*$ and therefore 
	\begin{align}
	\label{eq:simple-est-2}
	L(r, +\infty) \le 2L(r, R).\qquad \text{for $r \le r^*$.}
	\end{align}
	
	Next, let $k_0= k_0(R,K_0) \in \N$ such that $r_k \le \min \{r^*,\frac{R}{2}\}$ for $k \ge k_0$. Combining \eqref{eq:r-k-est-simple-1},  \eqref{eq:r-k-est-simple-L}, and \eqref{eq:simple-est-2}, we conclude
	\begin{align*}
	L(\vartheta r_k) &= \frac{1}{\vartheta r_k}  m(\vartheta r_k)+ L(\vartheta r_k, +\infty)
	\le \frac{1}{\vartheta r_k} m(r_k)+ L(\vartheta r_k, r_k)+ L(r_k, +\infty)\\
	&\le \frac{2K_0}{\vartheta}  L(r_k, 2r_k) + 2 L(r_k, R)\le 2\Bigl(  \frac{K_0}{\vartheta}+1 \Bigr)L(r_k, R)\qquad \text{for $k \ge k_0$,}
	\end{align*}
	as claimed. 
\end{proof}

The following proposition is used in the proof of \cref{thm:growth-lemma} when working with condition \eqref{eq:cond-A-3-2}. 

\begin{prop}\label{key-assumption-estimate-2}
	Assume \eqref{eq:cond-A1}, \eqref{eq:cond-A2} and \eqref{eq:cond-A-3-2} for some $R_0>0$. Let $\alpha>0$ resp. $M>0$ as in \cref{eq:cond-B} resp. \cref{eq:cond-A-3-2}. Let $K_0$ be a number with $K_0> \frac{8}{2^\alpha-1}$ and let $(r_k)_k$ be the sequence from \cref{prop:key-estimate}.  Then there exists $\vartheta_0= \vartheta_0(K_0,M) \in (0,1)$ and, for every $R \in (0,R_0)$, a number $k_0= k_0(R_0, R,K_0) \in \N$ such that 
	\begin{align}
	\label{r-k-variation-est}
	\sup_{x\in B_{\vartheta r_k }}\:\int_{B_R\setminus B_{r_k}}|j(z+x)-j(z)|\ dz \le \frac{1}{4} L(r_k, R)
	\quad \text{for $k \ge k_0$, ${\vartheta} \in (0,\vartheta_0]$.}
	\end{align}
\end{prop}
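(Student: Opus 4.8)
\emph{Proof plan.} The strategy is to apply the $BV$-translation inequality \eqref{eq:bv-loc-key-property} to $j$ on a thin annular neighbourhood of $B_R\setminus B_{r_k}$, and then to control the total variation on its right-hand side by means of \eqref{eq:cond-A-3-2} together with the tail estimate \eqref{eq:r-k-est} of \cref{prop:key-estimate}. Observe first that $K_0>\tfrac{8}{2^\alpha-1}$ is in particular $>\tfrac{2}{2^\alpha-1}$, so \cref{prop:key-estimate} applies to the sequence $(r_k)_k$; moreover $\alpha=1-\gamma\in(0,1)$, hence $2^\alpha-1\in(0,1)$ and $K_0>8>1$.

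Fix $R\in(0,R_0)$ and choose $k_0=k_0(R_0,R,K_0)$ so that, for all $k\ge k_0$, one has $r_k<R$, $R+r_k<R_0$, and \eqref{eq:r-k-est} holds. For $k\ge k_0$ and $\vartheta\in(0,\tfrac12]$ I would work on the open annulus $\Omega_{\vartheta,k}:=B_{R+\vartheta r_k}\setminus\overline{B_{(1-\vartheta)r_k}}$, which is an open subset of $B_{R_0}\setminus\{0\}$, so that $j\in BV_{\loc}(\Omega_{\vartheta,k})$. The compact set $K_k:=\{z\in\R^N:\ r_k\le|z|\le R\}$ is then compactly contained in $\Omega_{\vartheta,k}$ with $\dist(K_k,\partial\Omega_{\vartheta,k})=\vartheta r_k$ (an elementary computation). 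Hence, for $x\in B_{\vartheta r_k}$, picking $\eps\in(|x|,\vartheta r_k)$ and applying \eqref{eq:bv-loc-key-property} with $g=j$, $K=K_k$, $\Omega=\Omega_{\vartheta,k}$, and using $B_R\setminus B_{r_k}\subset K_k$, I obtain after taking the supremum over $x$
$$
\sup_{x\in B_{\vartheta r_k}}\int_{B_R\setminus B_{r_k}}|j(z+x)-j(z)|\,dz\ \le\ \vartheta r_k\,\|Dj\|\bigl(B_{R+\vartheta r_k}\setminus\overline{B_{(1-\vartheta)r_k}}\bigr).
$$

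Since $0<(1-\vartheta)r_k<R+\vartheta r_k<R_0$, assumption \eqref{eq:cond-A-3-2} with $r=(1-\vartheta)r_k$ bounds this total variation by $\tfrac{M}{(1-\vartheta)r_k}\int_{B_{(1-\vartheta)r_k}^c}j(z)\,dz=\tfrac{M}{(1-\vartheta)r_k}L\bigl((1-\vartheta)r_k,\infty\bigr)$; I then use $L((1-\vartheta)r_k,\infty)\le L((1-\vartheta)r_k)$ (immediate from the definition \eqref{eq:def-L-func} of $L$) together with \eqref{eq:r-k-est}, applied with dilation parameter $1-\vartheta\in[\tfrac12,1]$, to get $L((1-\vartheta)r_k)\le 2\bigl(\tfrac{K_0}{1-\vartheta}+1\bigr)L(r_k,R)$. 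Combining these, the powers of $r_k$ cancel, and using $\tfrac1{1-\vartheta}\le2$ and $2K_0+1\le3K_0$ for $\vartheta\le\tfrac12$ I arrive at
$$
\sup_{x\in B_{\vartheta r_k}}\int_{B_R\setminus B_{r_k}}|j(z+x)-j(z)|\,dz\ \le\ \frac{2\vartheta M}{1-\vartheta}\Bigl(\frac{K_0}{1-\vartheta}+1\Bigr)L(r_k,R)\ \le\ 12\,MK_0\,\vartheta\,L(r_k,R).
$$
It then suffices to set $\vartheta_0:=\min\{\tfrac12,(48\,MK_0)^{-1}\}$, which depends only on $M$ and $K_0$, so that $12\,MK_0\,\vartheta\le\tfrac14$ for every $\vartheta\in(0,\vartheta_0]$; this yields \eqref{r-k-variation-est} for all $k\ge k_0$.

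I expect the only genuinely delicate step to be the passage $L((1-\vartheta)r_k,\infty)\le 2(\tfrac{K_0}{1-\vartheta}+1)L(r_k,R)$: this is exactly where the careful construction of the radii $r_k$ in \cref{prop:key-estimate} enters, and it is crucial that the estimate there is uniform in the dilation parameter, so that the final constant $12MK_0$, and hence $\vartheta_0$, depends only on $M$ and $K_0$ and not on $\vartheta$. The remaining ingredients — localising $\Omega_{\vartheta,k}$ inside $B_{R_0}\setminus\{0\}$ for $k$ large, the identity $\dist(K_k,\partial\Omega_{\vartheta,k})=\vartheta r_k$, and the bookkeeping of constants — are routine; the decisive smallness factor $\vartheta$ in the final bound originates precisely from matching the translation length $\le\vartheta r_k$ against the $\tfrac1r$ weight in \eqref{eq:cond-A-3-2}.
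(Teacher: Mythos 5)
Your proof is correct and follows essentially the same route as the paper: apply \eqref{eq:bv-loc-key-property} on the annulus $B_{R+\vartheta r_k}\setminus\overline{B_{(1-\vartheta)r_k}}$, bound the total variation via \eqref{eq:cond-A-3-2}, rewrite in terms of $L((1-\vartheta)r_k)$, and invoke \eqref{eq:r-k-est} from \cref{prop:key-estimate} with dilation parameter $1-\vartheta$ to obtain a bound $\lesssim \vartheta\, M\,K_0\, L(r_k,R)$. The only (cosmetic) difference is that you restrict to $\vartheta\le\frac12$ from the outset to extract an explicit $\vartheta_0=\min\{\frac12,(48MK_0)^{-1}\}$, whereas the paper simply chooses $\vartheta_0$ small enough so that $\frac{\vartheta M}{1-\vartheta}\bigl(\frac{2K_0}{1-\vartheta}+2\bigr)\le\frac14$.
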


\begin{proof}
	By \eqref{eq:cond-A-3-2} and \eqref{eq:bv-loc-key-property}, we have that, for $0<r<\min \{R,R_0-R\}$ and $\vartheta \in (0,1)$, 
	\begin{align}
	\sup_{x\in B_{\vartheta r}}\int_{B_R \setminus B_{r}}&|j(z+x)-j(z)|\ dz \leq \vartheta r
	\bigl\|Dj\bigr\| \bigl(B_{R+\vartheta r} \setminus \overline{B_{(1-\vartheta)r}}\bigr)\leq \frac{\vartheta r\, M}{(1-\vartheta)r}  \int_{B_{(1-\vartheta)r}^c}j(z)\ dz
	\nonumber\\
	&= \frac{\vartheta M}{1-\vartheta}  L((1-\vartheta)r, +\infty) \le \frac{\vartheta M}{1-\vartheta}  L((1-\vartheta)r)\label{prop-2-variation-est-1}
	\end{align}
	By \cref{prop:key-estimate} we also have 
	\begin{align}
	\label{prop-2-variation-est-2}
	L((1-\vartheta)r_k) \le  \Bigl(\frac{ 2K_0}{1-\vartheta}+2 \Bigr) L(r_k, R) \qquad \text{for $k \ge k_0=k_0(R,K_0)$.}
	\end{align}
	Making $k_0$ larger if necessary, we may also assume that $r_k \le \min \{R,R_0-R\}$ for $k \ge k_0$. Then we may combine \eqref{prop-2-variation-est-1} and \eqref{prop-2-variation-est-2} to get 
	$$
	\sup_{x\in B_{\vartheta r_k}}\int_{B_R \setminus B_{r_k}}|j(z+x)-j(z)|\ dz \le \frac{\vartheta M}{1-\vartheta} \Bigl(\frac{ 2K_0}{1-\vartheta}+2 \Bigr) L(r_k, R) \qquad \text{for $k \ge k_0$.}
	$$
	Choosing $\vartheta_0 \in (0,1)$ small enough such that $\frac{\vartheta M}{1-\vartheta} \Bigl(\frac{ 2K_0}{1-\vartheta}+2 \Bigr) \le \frac{1}{4}$ for $\vartheta \in (0,\vartheta_0]$, we thus get the assertion.
\end{proof}

We now have all tools at hand in order to prove \cref{thm:growth-lemma}.

\begin{proof}[Proof of \cref{thm:growth-lemma}]
	The proof follows the general strategy of \cite{Sil06} and \cite[Lemma 2]{KaMi17}, but new ingredients are needed to deal with weak solutions and the new condition \eqref{eq:cond-A-3-2}.

 Note that, due to \cref{eq:cond-A2}, $\lim_{r\to 0^+}L(r)=\infty$ with $L$ as in \eqref{eq:def-L-func}. Choose a smooth and strictly decreasing function $\beta: [0,\infty) \to [0,\infty)$ satisfying $\beta(0)=1$ and 
	$$
	c_b:= 2 \bigl \|b \bigr\|_{C^1_b(\R^N)} < \infty \qquad \text{for the function $\quad b: \R^N \to \R$, $\quad b(x)= \beta(|x|)$.}
	$$
	Moreover, we let 
	$$
	\beta_r(s) := \beta(\tfrac{s}{r}) \quad \text{and}\quad 
	b_r(x) := \beta_r(|x|)= b(\tfrac{x}{r})  \qquad \text{for $r>0$, $s \ge 0$ and $x \in \R^d$.}
	$$ 
	For $a>1$ we define
	\begin{align}
	\label{eq:def-theta-a}
	\theta_a := \frac{1}{a}\bigl(\beta(\tfrac{1}{2})- \beta(1)\bigr) \in (0,1) \qquad \text{and} \qquad d_a:= 1+ \frac{\beta(1)}{a} = 1-\theta_a + \frac{\beta(\frac{1}{2})}{a} \in (0,2) \,.
	\end{align}
	Moreover, we fix $K_0> \frac{8}{2^\alpha-1}$, where $\alpha$ satisfies \cref{eq:cond-B}, and we consider the corresponding strictly decreasing sequence of radii $r_k>0$, $k \in \N$ with $\lim \limits_{k \to \infty}r_k =0$ given by \cref{prop:key-estimate}. Let us complete the proof in the first case, i.e., under condition \eqref{eq:cond-A-3-1}.
	{\bf Case 1:} We assume condition \cref{eq:cond-A-3-1}. Let $\sigma, c_0 \in (0,1)$ be the constants from condition \cref{eq:cond-A-3-1}.  Set
	\begin{align}
	\label{eq:def-eta-1}
	\vartheta := \sigma, \qquad \eta := \frac{\sigma}{2} = \frac{\vartheta}{2} \,.
	\end{align}
	We define the function $h$ by 
\begin{align}\label{eq:h-func-case-1}
	r\mapsto h(r):= \frac{c_0 \Lambda^{-1}}{33 \bigl(\frac{  K_0}{\vartheta}+1\bigr)} \inf_{0<s\leq r} L(s) \,.
\end{align}
The constants $\Lambda, c_0$ are as in \eqref{eq:cond-K} resp. as in \eqref{eq:cond-A-3-1}. The constants $\vartheta, K_0$ have already be chosen above. Moreover, we fix $a = \max \bigl \{1, \frac{16\Lambda^2 c_b}{c_0} \bigl(\frac{  K_0}{\vartheta}+1\bigr) \}$. We claim that the assertion of \cref{thm:growth-lemma} holds true for this choice of the values $\eta$ and $\theta := \theta_a$.
	
	\medskip
	
	Let $R \in (0,R_0)$ and $v_\infty>0$ be given. Let $k_0 \in \N$ be such that \eqref{eq:r-k-est} holds. Since $\lim \limits_{k \to \infty}L(r_k,R) = \infty$ by assumption \cref{eq:cond-A2}, we may fix $k \ge k_0$ such that 
	\begin{align}
	\label{eq:condition-j}
	\frac{c}{\Lambda}+L(R-r_k, \infty) < \frac{c_0}{\Lambda^2 4 (v_\infty+2)}\, L(r_k,R),    
	\end{align}
	where $c$ is the constant in the G{\aa}rding inequality, Lemma \ref{lemma:garding}.
	We then set $r:= r_k$ and consider a function $v \in V_{\loc}(B_{R}|\R^N) \cap L^\infty(\R^N)$ satisfying \eqref{eq:cond-v-1}, \eqref{eq:cond-v-2} and \eqref{eq:key-estimate}. We then need to show 
	\begin{align}
	\label{eq:label-v-est-required}
	v \le 1 -\theta_a \qquad \text{a.~e. in $B_{\eta r}$.}
	\end{align}
	To see this, we consider the function 
	\begin{align}
	\label{eq:def-u-function}
	u \in V_{\loc}(B_{R}|\R^N) \cap L^\infty(\R^N),\qquad u(x)= v(x) +\frac{1}{a} b_{{\vartheta {r}}}(x)-{d_a}.
	\end{align}
	Since $v \leq 1$, for $x \in B_R \setminus B_{{\vartheta {r}}}$ we conclude
	$$
	u(x) \le 1 + \frac{1}{a} b_{{\vartheta {r}}}(x)-{d_a} \le 1 + \frac{1}{a} \beta_{{\vartheta {r}}}({\vartheta {r}})-{d_a}=0. 
	$$
	The whole proof is complete if we can show 
	\begin{align}
	\label{eq:main-claim}
	u \le  0 \qquad \text{a.~e. in $B_{{\vartheta {r}}}$}\,.  
	\end{align}
	This proves \eqref{eq:label-v-est-required} because then for a.~e. $x \in B_{\frac{{\vartheta {r}}}{2}}= B_{\eta {r}}$
	$$
	v(x) = u(x) - \frac{1}{a} b_{{\vartheta {r}}}(x) + {d_a} \le {d_a} - \frac{1}{a} \beta_{{\vartheta {r}}}(\frac{{\vartheta {r}}}{2}) = 1-\theta_a \,.
	$$
	In order to prove \eqref{eq:main-claim}, we first note, similarly as in \cite{KaMi17}, for $0 < r < 1$ and $x \in \R^N$
	\begin{align*}
	|\op  b_r(x)| & = \Big| \int_{\R^N}\bigl(b(\frac{x}{r})-b(\frac{x+z}{r})\bigr)K(x,x+z)\,dz \Big| \\
	& \leq \int_{B_r} \Big| b(\frac{x}{r})-b(\frac{x+z}{r}) \Big| K(x,x+z) dz + \int_{\R^N \setminus B_r} \Big| b(\frac{x}{r})-b(\frac{x+z}{r}) \Big| K(x,x+z) dz \\
	&\le \Lambda \Bigl( \|\nabla b_r\|_\infty \int_{B_r} |z| j(z)\,dz + 
	2\| b_r \|_\infty \int_{B_r^c} j(z)\,dz\Bigr) \leq  c_b \Lambda L(r) \,. 
	\end{align*}
	This implies
	\begin{align}
	\left\{
	\begin{aligned}
	 \op  u &\le h(R) + \frac{c_b \Lambda}{a}L({\vartheta {r}})  \qquad \text{in $B_{R}$},\\
	u   &\le 0 \qquad \text{on $B_R \setminus B_{{\vartheta {r}}}$} \,.
	\end{aligned}
	\right. 
	\end{align}
	Moreover, $\|u\|_{L^\infty(\R^N)} \le \|v\|_{L^\infty(\R^N)} + 2$. For points $x \in A:= (B_{R} \setminus B_{{r}}) \cap \{v \le 0\}$ we have 
	$$
	u(x) \le \frac{1}{a} b_{{\vartheta {r}}}(x)-{d_a} = -1  + \frac{1}{a}\Bigl(b_{{\vartheta {r}}}(x)- b_{{\vartheta {r}}}({\vartheta {r}})\Bigr)\le -1 
	$$
	Finally, we note that, since $u \le 0$ on $B_R \setminus B_{{\vartheta {r}}}$, we can use the function 
	$w:=u^+ 1_{B_{{\vartheta {r}}}}$ as a test function by \cref{lem:local-cutoff}. The pointwise estimates of $v$ resp. $u$ then imply the following:
	\begin{align*}
	\Bigl(& h(R) + \frac{\Lambda c_b}{a}L({\vartheta {r}})\Bigr)\int_{B_{{\vartheta {r}}}} u^+\,dx \\
	&\geq \cE(u,u^+1_{B_{{\vartheta {r}}}}) =\cE(u^+1_{B_{{\vartheta {r}}}},u^+1_{B_{{\vartheta {r}}}})-\cE(u^-1_{B_{{\vartheta {r}}}},u^+1_{B_{{\vartheta {r}}}})+\cE(u1_{B^c_{{\vartheta {r}}}},u^+1_{B_{{\vartheta {r}}}})\\
	& \ge -c\int_{B_{\vartheta r}} (u^+(x))^2\ dx - \int_{B_{{\vartheta {r}}}}u^+(x) 	\int_{\R^N \setminus B_{{\vartheta r}}} u(y)K(x,y)dydx,
	\end{align*}
	where we have used the G{\aa}rding inequality. In the last line, of this inequality, we have
	$$
	\int_{B_{\vartheta r}} (u^+(x))^2\ dx\leq (\|v\|_{L^\infty(\R^N)}+2)\int_{B_{\vartheta r}} u^+(x)\ dx
	$$
	and
	\begin{align*}
	\int_{B_{{\vartheta {r}}}}u^+(x) 	&\int_{\R^N \setminus B_{{\vartheta r}}} u(y)K(x,y)dydx\\
	&\le   \int_{B_{{\vartheta {r}}}}u^+(x) \int_{\R^N \setminus B_{{r}}} u(y)K(x,y)dydx \\
	&\le   \int_{B_{{\vartheta {r}}}}u^+(x) \biggl( \int_{B_{R}^c}u(y) K(x,y)dy- \int_{A} K(x,y)dy \biggr)dx \\
	&\le   \int_{B_{{\vartheta r}}}u^+(x) \biggl( \Lambda(\|v\|_{L^\infty(\R^N)}+2)\int_{B_{R}^c} j(y-x) dy- \Lambda^{-1}\int_{A} j(y-x)dy  \biggr)dx\\ 
	&\le   \left((v_\infty+2) \Lambda L(R-{r}, \infty)- \Lambda^{-1}\inf_{x \in B_{\vartheta {r}}} \int_{A} j(y-x) dy    \right) \int_{B_{{\vartheta r}}}u^+(x)dx .  
	\end{align*}
		We thus find
\begin{align*}
	\Bigl(h(R)& + \frac{\Lambda c_b}{a}L({\vartheta {r}})\Bigr)\int_{B_{{\vartheta {r}}}} u^+\,dx \\
	&\geq\left(\Lambda^{-1}\inf_{x \in B_{\vartheta {r}}} \int_{A} j(y-x) dy    -(v_\infty+2)\Big(c+\Lambda L(R-{r}, \infty)\Big) \right) \int_{B_{{\vartheta r}}}u^+(x)dx
	\end{align*}
	Let us prove \eqref{eq:main-claim}. We assume that \eqref{eq:main-claim} does not hold true. We divide in the above inequality by $\int_{B_{{\vartheta r}}}u^+(x)dx$ and obtain 
	\begin{align} \label{growth-1-est-1}  
	h(R) + \frac{\Lambda c_b}{a}L({\vartheta {r}}) \ge \Lambda^{-1}\inf_{x \in B_{\vartheta {r}}} \int_{A} j(y-x) dy  -(v_\infty+2)\Big(c+\Lambda L(R-{r}, \infty)\Big).
	\end{align}
	Let us estimate the right-hand side from below. Since $|y| \ge {r}$ for every $y \in A$, we know
	$$
	1- \vartheta \le   \frac{|x-y|}{|y|} \le  1+ \vartheta \qquad \text{for $y \in A$, $x \in B_{\vartheta {r}}$.}
	$$
	Recalling that $\vartheta = \sigma$ in this case, we deduce from assumption \cref{eq:cond-A-3-1} and \eqref{eq:key-estimate} 
	\begin{equation}\label{growth-1-est-2}  
	\inf_{x \in B_{\vartheta {r}}} \int_{A} j(y-x)dy \ge c_0 \int_A j(y)\,dy = c_0 \mu_j(A) \geq  \frac{c_0}{2} \mu_{j}(B_{R} \setminus B_{r}) = \frac{c_0}{2}L(r,R).
	\end{equation}

	Applying \eqref{eq:condition-j} to \eqref{growth-1-est-1}, \eqref{growth-1-est-2} and afterwards \eqref{eq:r-k-est}  leads to
	\begin{equation*}
	h(R) +  \frac{\Lambda c_b}{a}L({\vartheta {r}})
	\ge  \Lambda^{-1} \Bigl(\frac{c_0}{2} -\frac{c_0}{4} \Bigr) L(r,R) \ge  \frac{c_0 \Lambda^{-1}}{8 \bigl(\frac{  K_0}{\vartheta}+1\bigr)} L({\vartheta {r}})   
	\end{equation*}
	and thus 
	$$
	h(R)\ge \Bigl(\frac{\Lambda^{-1} c_0}{8 \bigl(\frac{  K_0}{\vartheta}+1\bigr)} -\frac{\Lambda c_b}{a}\Bigr)L({\vartheta {r}})
	$$
	Since $a \geq  \frac{16 \Lambda^2 c_b}{c_0} \bigl(\frac{  K_0}{\vartheta}+1\bigr)$, we get  
	$$
	h(R)  \geq \frac{c_0 \Lambda^{-1}}{16 \bigl(\frac{  K_0}{\vartheta}+1\bigr)}  L({\vartheta {r}}) \,.
	$$
	With \eqref{eq:h-func-case-1} it follows $\frac{33}{16}h(R)\leq h(R)$, a contradiction. The assumption that \eqref{eq:main-claim} does not hold true has led to a contradiction. The proof is finished in Case 1.

\medskip

	{\bf Case 2:} Let us now assume condition \cref{eq:cond-A-3-2}.  
 
In this case we put
$$
\vartheta= \vartheta_0\qquad \text{and}\qquad \eta = \frac{\vartheta_0}{2},
$$
where $\vartheta_0= \vartheta(K_0,M)$ is given by \cref{key-assumption-estimate-2}. Moreover, we fix 
$$
a > \max \bigl \{1,  32 \Lambda^2 c_b \bigl(\frac{ K_0}{\vartheta}+1\bigr)  \},
$$
and we claim that the assertion of the theorem holds with these values of $\eta$, $\kappa$ and $\theta := \theta_a$ as given in (\ref{eq:def-theta-a}). To see this, we let $R \in (0,R_0)$ and $v_\infty>0$ be given, and we let $k_0 \in \N$ be such that (\ref{eq:r-k-est}) holds. Since $\lim \limits_{k \to \infty}L(R,r_k) = \infty$ by assumption \eqref{eq:cond-A2}, we may fix $k \ge k_0$ such that 
\begin{align}
	\label{eq:condition-j-1}
	\frac{c}{\Lambda}+L(R-r_k, +\infty) < \frac{1}{8 (v_\infty+2)}\, L(R,r_k),     
\end{align}
where $c$ is the constant from the G{\aa}rding inequality as in the first case. We then put $r:= r_k$ and we consider a function $v \in V_{\loc}(B_{R}|\R^N) \cap L^\infty(\R^N)$ satisfying \eqref{eq:cond-v-1}, \eqref{eq:cond-v-2}, and \eqref{eq:key-estimate}. Again, we need to show (\ref{eq:label-v-est-required}) with the current values of $r$, $a$ and $\eta$. For this we define the function $u$ as in (\ref{eq:def-u-function}), and we need to show that 
\begin{align}
	\label{eq:main-claim-2}
	u \le  0 \qquad \text{a.~e. in $B_{{\vartheta {r}}}$.}  
\end{align}
Assuming by contradiction that (\ref{eq:main-claim-2}) does not hold, we find, as before in \eqref{growth-1-est-1}, that 	
\begin{align}
	\label{growth-1-est-1-2}  
	  h(R) + \frac{\Lambda  c_b}{a}L({\vartheta {r}}) \ge \Lambda^{-1}\inf_{x \in B_{\vartheta {r}}} \int_{A} j(y-x)dy -  (v_\infty+2)\big(c+\Lambda L(R-{r}, +\infty)\big).
\end{align}
Here, $h$ is given by
\begin{equation}\label{eq:h-func-case-2} 
r\mapsto h(r) := \frac{\Lambda^{-1}}{33 \bigl(\frac{K_0}{\vartheta }+1\bigr)} \inf_{0<s\leq r} L(s),
\end{equation}
so that we have as before $\lim_{r\to 0} h(r)=\infty$, and, as in Case 1, $A$ is given by $A= (B_{R} \setminus B_{{r}}) \cap \{v \le 0\}$. Moreover, for $x \in B_{\vartheta {r}}$ we have, by our choice of $\vartheta$, \cref{key-assumption-estimate-2} and \eqref{eq:key-estimate}, 
\begin{align}
	\inf_{x \in B_{\vartheta {r}}} &\int_{A} j(y-x)dy \ge \int_{A} j(y)dy - \sup_{x \in B_{\vartheta {r}}}\: \int_{A}|j(y-x)-j(y)|dy \label{growth-1-est-2-II}\\
	&\ge \int_{A} j(y)dy - \sup_{x \in B_{\vartheta {r}}}\: \int_{B_{R} \setminus B_{{r}}}|j(y-x)-j(y)|dy \ge \Bigl(\frac{1}{2}-\frac{1}{4}\Bigr)L(R,{r}) = \frac{1}{4}L(R,{r}).  \nonumber
\end{align}
Combining (\ref{eq:r-k-est}), (\ref{eq:condition-j-1}), (\ref{growth-1-est-1-2}), and (\ref{growth-1-est-2-II}) now gives   
\begin{equation*}
  h(R) + \frac{\Lambda c_b}{a}L({\vartheta {r}}) 
	\ge \bigl(\frac{1}{4\Lambda}-\frac{1}{8\Lambda}\bigr)   L(R,{r}) \ge \frac{1}{16\Lambda \bigl(\frac{ K_0}{\vartheta}+1\bigr)} L({\vartheta {r}})
\end{equation*}
and thus, since $a > 32 \Lambda^2 c_b \bigl(\frac{ K_0}{\vartheta}+1\bigr)$, we get that 
$$
h(R) \ge \Bigl(\frac{1}{16\Lambda \bigl(\frac{ K_0}{\vartheta}+1\bigr)}-\frac{\Lambda c_b}{a}\Bigr) L({\vartheta {r}})\ge \frac{\Lambda^{-1}}{16 \bigl(\frac{K_0}{\vartheta }+1\bigr)}  L({\vartheta {r}}).
$$
With \eqref{eq:h-func-case-2} it follows $\frac{33}{16}h(R)\leq h(R)$, a contradiction. Again, the assumption that \eqref{eq:main-claim-2} does not hold true has led to a contradiction. Thus, the proof is finished in Case 2 as well and the proof of \cref{thm:growth-lemma} is complete.
\end{proof}

\section{Proof of \texorpdfstring{\cref{thm:continuity}}{Theorem 1.1}}\label{sec:continuity}

In this section we provide the proof of a generalization of \cref{thm:continuity}. The key tool in the proof will be the growth lemma given in \cref{thm:growth-lemma}.

	\begin{thm}
		\label{thm:continuity:section}
		Suppose that \cref{eq:cond-K}, \eqref{eq:cond-K_as2}, \cref{eq:cond-A1} with $\gamma\in(0,1)$, and \cref{eq:cond-A2} hold for a nonnegative function $j \in L^1_{loc}(\R^N \setminus \{0\})$. Assume further that \eqref{eq:cond-A-3-1} \underline{or} \eqref{eq:cond-A-3-2} is satisfied for some $R_0>0$. 
		Moreover, let $\Omega \subset \R^N$ be open and $f, W \in L^\infty_{\loc}(\Omega)$. Then every weak solution $u \in V(\Omega|\R^N) \cap L^{\infty}_{loc}(\Omega)$  of \eqref{general-op-eq} with bounded $j$-tail in $\Omega$ has a continuous representative.
		
		Moreover, for open subsets $A \Subset B_* \Subset B \Subset \Omega$, there exists a nondecreasing continuous function $\omega : [0, \infty) \to [0, \infty)$ independent of $f$ and $u$ with $\omega(0)=0$ and the property that 
		\begin{align}
			\label{new-continuity-main-theorem-estimate:section}
                  &|u(x) - u(y)|\\
                  &\; \leq \omega(|x-y|)\bigl(\|u\|_{L^{\infty}(B)}+ \|f\|_{L^\infty(B_*)} + T(u,B_*,B)\bigr) \qquad \text{for all $x,y \in A$.}\nonumber
		\end{align} 
	\end{thm}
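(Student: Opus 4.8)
Following the strategy of \cite{Sil06} and \cite{KaMi17}, the estimate is obtained by iterating the growth lemma \cref{thm:growth-lemma} on a sequence of shrinking concentric balls around a fixed point. The new twist forced by the lack of scale invariance is that the oscillation is not made to decay geometrically; rather, its rate of decay is tied to the blow-up of the function $h$ from \cref{thm:growth-lemma}.

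\textbf{Step 1 (reduction and normalization).} Fix $A\Subset B_*\Subset B\Subset\Omega$ and set $M:=\|u\|_{L^\infty(B)}+\|f\|_{L^\infty(B_*)}+T(u,B_*,B)$; we may assume $M>0$. As in \cref{remark-new-section-boundedness}(ii), I would first replace $u$ by a function $\tilde u\in V(B_*|\R^N)\cap L^\infty(\R^N)$ with $\tilde u=u$ on $B$ and $\|\tilde u\|_{L^\infty(\R^N)}\le\|u\|_{L^\infty(B)}$, which solves $\op\tilde u=W\tilde u+g$ weakly in $B_*$, where $g=f+\tilde f$ and $\tilde f(x)=\int_{\R^N\setminus B}(u(y)-\tilde u(y))K(x,y)\,dy$. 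By \eqref{eq:cond-K} and \eqref{eq:cond-A1} (the latter making $\int_{\{|z|\ge\dist(B_*,\partial B)\}}j(z)\,dz$ finite) one gets $\|g\|_{L^\infty(B_*)}\le CM$ with $C=C(N,\Lambda,j,B_*,B)$, and $\tilde u\in V(B_*|\R^N)$ because $B\Subset\Omega$ and $\tilde u$ is bounded. Dividing by $M$, it suffices to prove: \emph{if $u\in V(B_*|\R^N)\cap L^\infty(\R^N)$ solves $\op u=Wu+g$ weakly in $B_*$ with $\|u\|_{L^\infty(\R^N)}\le1$ and $\|g\|_{L^\infty(B_*)}\le C_0$ for a fixed constant $C_0$, then $|u(x)-u(y)|\le\omega(|x-y|)$ for $x,y\in A$ with $\omega$ independent of $u$ and $g$.} Put $G:=\|W\|_{L^\infty(B_*)}+C_0$, an upper bound for $\|Wu+g\|_{L^\infty(B_*)}$.

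\textbf{Step 2 (the iteration).} Let $\theta,\eta\in(0,1)$ and $h$ be as in \cref{thm:growth-lemma}, and fix $\rho_0\in(0,R_0)$ so small that $B_{\rho_0}(x_0)\Subset B_*$ for every $x_0\in A$ and $h(\rho_0)\ge G$ (possible since $h(r)\to\infty$ as $r\to0$). Fix $x_0\in A$. I construct strictly decreasing radii $\rho_0>\rho_1>\dots$ and numbers $2=L_0\ge L_1\ge\dots>0$ with
\begin{equation*}
\operatorname{ess\,osc}_{B_{\rho_k}(x_0)}u\le L_k\qquad\text{and}\qquad\frac{2G}{L_k}\le h(\rho_k)\qquad(k\ge0).
\end{equation*}
Given $(\rho_k,L_k)$, let $c_k$ be the midpoint of an interval of length $L_k$ containing the essential range of $u$ over $B_{\rho_k}(x_0)$, and set $v:=\tfrac{2}{L_k}(u-c_k)$. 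Then $v\le1$ and $-v\le1$ a.e.\ in $B_{\rho_k}(x_0)$, $\|v\|_{L^\infty(\R^N)}\le\tfrac{4}{L_k}=:v_\infty$, and $\op(\pm v)=\pm\tfrac{2}{L_k}(Wu+g)\le\tfrac{2G}{L_k}\le h(\rho_k)$ weakly in $B_{\rho_k}(x_0)$ (using $\op c_k=0$). Since $(x,y)\mapsto K(x_0+x,x_0+y)$ satisfies all standing hypotheses with the \emph{same} $j$ and the same $\tilde A$, I apply \cref{thm:growth-lemma} to it with $R=\rho_k$ and this $v_\infty$, obtaining $r_k\in(0,\rho_k)$ depending only on $\rho_k$, $L_k$ and the fixed data (hence not on $u$, $g$ or $x_0$). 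As $\{v\le0\}\cup\{v\ge0\}=\R^N$, at least one of these two sets carries at least half of the measure $E\mapsto\int_E j(z-x_0)\,dz$ of $B_{\rho_k}(x_0)\setminus B_{r_k}(x_0)$, so applying \cref{thm:growth-lemma} to whichever of $v$, $-v$ satisfies \eqref{eq:key-estimate} gives $\operatorname{ess\,osc}_{B_{\eta r_k}(x_0)}u\le(1-\tfrac\theta2)L_k$. I then set
\begin{equation*}
\rho_{k+1}:=\eta r_k,\qquad L_{k+1}:=\max\Bigl\{(1-\tfrac\theta2)L_k,\ \frac{2G}{h(\rho_{k+1})}\Bigr\},
\end{equation*}
and both displayed properties are preserved (for $L_{k+1}\le L_k$ one uses $h(\rho_{k+1})\ge h(\rho_k)$ together with $L_k\ge\tfrac{2G}{h(\rho_k)}$).

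\textbf{Step 3 (passage to the modulus).} Since $\rho_{k+1}<\eta\rho_k$, we have $\rho_k\le\eta^k\rho_0\to0$, hence $h(\rho_k)\to\infty$ and $\eps_k:=\tfrac{2G}{h(\rho_{k+1})}\to0$. From $0\le L_{k+1}\le\max\{(1-\tfrac\theta2)L_k,\eps_k\}$ and the monotonicity of $(L_k)$ one obtains $\lim_kL_k\le(1-\tfrac\theta2)\lim_kL_k$, so $L_k\to0$. The sequences $(\rho_k)$, $(L_k)$ depend only on $\rho_0$, $G$, $\theta$, $\eta$, $h$ and the selection $r_k=r(\rho_k,L_k)$ of \cref{thm:growth-lemma}, hence not on $u$, $g$ or $x_0\in A$. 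Let $\omega$ be the continuous nondecreasing function equal to $2$ on $[\rho_0,\infty)$, interpolating linearly with $\omega(\rho_{k+1})=L_k$ on each $[\rho_{k+1},\rho_k]$, and with $\omega(0)=0$. For $x,y\in A$ with $s:=|x-y|<\rho_0$, pick $k$ with $s\in[\rho_{k+1},\rho_k)$; running the iteration centred at $x$ gives $y\in B_{\rho_k}(x)$ and $|u(x)-u(y)|\le\operatorname{ess\,osc}_{B_{\rho_k}(x)}u\le L_k\le\omega(s)$, while for $s\ge\rho_0$ the bound is trivial. Undoing the normalization of Step 1 yields \eqref{new-continuity-main-theorem-estimate:section}; moreover the uniform bound $\operatorname{ess\,osc}_{B_\rho(x_0)}u\le\omega(\rho)$ for $x_0\in A$ shows that $u$ coincides a.e.\ with a function continuous on $A$, hence (as $A\Subset B_*\Subset B\Subset\Omega$ were arbitrary) $u$ has a continuous representative on $\Omega$.

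\textbf{Main obstacle.} The crux is Step 2. Lacking scale invariance one cannot rescale space, and \cref{thm:growth-lemma} tolerates a source term only up to $h(R)$ at scale $R$; after the rescaling $v=\tfrac2{L_k}(u-c_k)$ the source $Wu+g$ acquires size $\tfrac{2G}{L_k}$, which would outgrow $h(\rho_k)$ if $L_k$ were forced to decay geometrically. The remedy — letting $L_k$ decay only as fast as $h(\rho_k)$ blows up, via the $\max$-rule — is precisely what restricts the conclusion to a (possibly very slow) modulus $\omega$ rather than a Hölder estimate, and checking that $L_k\to0$ then hinges on $\rho_k\to0$. A secondary, purely bookkeeping point is to guarantee that $\omega$ is independent of $u$ and $f$: one must replace $\|u\|_{L^\infty(\R^N)}$ and $\|Wu+g\|_{L^\infty(B_*)}$ by the fixed bounds $1$ and $G$ wherever these quantities enter the choice of $r_k$ and $L_{k+1}$, and use that the radii produced by \cref{thm:growth-lemma} do not depend on the centre $x_0$.
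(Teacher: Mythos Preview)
Your proof is correct and follows essentially the same approach as the paper: reduce to a globally bounded $\tilde u$ with bounded right-hand side on $B_*$, then iterate \cref{thm:growth-lemma} on a shrinking sequence of balls, letting the oscillation bound $L_k$ decrease only as fast as $h(\rho_k)^{-1}$ allows. Your bookkeeping via the invariant $\tfrac{2G}{L_k}\le h(\rho_k)$ is a slightly cleaner packaging of the paper's dichotomy (either $\cO(R)\le \tfrac{2\tilde K c(f,u)}{h(R)}$ or apply the growth lemma), but the substance is identical; the only point handled more carefully in the paper is the passage from the a.e.\ oscillation bound to a genuine pointwise estimate for a continuous representative, which it does by excising an explicit null set.
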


Before giving the proof of this theorem, we briefly explain why \cref{thm:continuity} is an immediate corollary. Indeed, if $u \in L^\infty(\R^N)$ as assumed in 
\cref{thm:continuity}, then $u$ has a bounded $j$-tail in $\Omega$. Moreover, for given open sets $A \Subset B \Subset \Omega$, we may choose an open subset $B_* \subset \Omega$ with $A \Subset B_*  \Subset B$ and obtain (\ref{new-continuity-main-theorem-estimate:section}) with a function $\omega$ having the desired properties. Since 
$$
T(u,B_*,B) \le \tilde C \|\tilde u\|_{L^\infty(\R^N)} \qquad \text{with}\quad \tilde C= \sup_{z \in B_*}\int_{\R^N \setminus B}j(y-z)\,dy
$$
and $\|f\|_{L^\infty(B_*)} \le \|f\|_{L^\infty(B)}$, we thus get (\ref{new-continuity-main-theorem-estimate}) by replacing $\omega$ with $(1+\tilde C)\omega$.

\begin{proof}[Proof of \cref{thm:continuity:section}]   
          It clearly suffices to show (\ref{new-continuity-main-theorem-estimate:section}) for open subsets $A \Subset B_* \Subset B \Subset \Omega$. We put
        $$
        c(f,u):=\|u\|_{L^{\infty}(B)}+ \|f\|_{L^\infty(B_*)} + T(u,B_*,B),
        $$
        and we may assume that $c(f,u)>0$, since otherwise $u \equiv 0$ on $B$ and therefore (\ref{new-continuity-main-theorem-estimate:section}) holds with any function $\omega : [0, \infty) \to [0, \infty)$. We now consider an arbitrary extension $\tilde u \in L^\infty(\R^N)$ of $u \big|_{B}$ satisfying $\|\tilde u \|_{L^\infty(\R^N)} = \|u\|_{L^\infty(B)}$. Then $\tilde u \in V(B_*|\R^N)$, and
        $\tilde u$ is a weak solution of
        $$
        \op \tilde u = \tilde f \qquad \text{in $B_*$}
        $$
        with 
  $$
  \tilde f(x) = W(x) u(x) +f(x) + \int_{\R^N \setminus B}(u(y)-\tilde u(y))K(x,y)\,dy \qquad \text{for $x \in B_*$.}
  $$
  where, by (\ref{eq:cond-K}), we have 
  $$
  \Bigl|\int_{\R^N \setminus B}(u(y)-\tilde u(y))K(x,y)\,dy\Bigr|
\le \Lambda \bigl( T(u,B_*,B)+T(\tilde u,B_*,B)\bigr)
  $$
  for $x \in B_*$ and
  $$
  T(\tilde u,B_*,B) \le \tilde C \|\tilde u\|_{L^\infty(\R^N)} = \tilde C \|u\|_{L^\infty(B)} \quad \text{with}\quad \tilde C= \sup_{z \in B_*}\int_{\R^N \setminus B}j(y-z)\,dy.
  $$
Hence we get that
        \begin{align}
          \|\tilde f\|_{L^\infty(B_*)} &\le \Bigl(\Lambda \tilde C + \|W\|_{L^\infty(B)}\Bigr)\|u\|_{L^\infty(B)}+ \|f\|_{L^\infty(B_*)} +  \Lambda T(u,B_*,B) \nonumber\\
                                       &\le \tilde K\, c(u,f)\qquad \text{with}\quad      \tilde K : = \max\{1,\Lambda, \Lambda \tilde C, \|W\|_{L^\infty(B)}\}.
                                         \label{eq:tilde-f-est}
        \end{align}
        We also note that all quantities in the remainder of the proof will only depend on the values of $u \big|_{B} = \tilde u\big|_{B}$. 
        We consider $R_0>0$ as in the assumption, so $R_0$ also satisfies the assumption of \cref{thm:growth-lemma}. We then put $R_*:=\frac{1}{2} \min\bigl\{R_0, \dist(A,\R^N \setminus B_*)\bigr\}$, and we let $\eta,\theta\in(0,1)$ and $h$ be given by \cref{thm:growth-lemma}. We now first show that for every $x_0 \in A$ and $R \in (0,R_*]$ there is $r'\in(0,R)$ independent of $u$, $f$ and $x_0$ with  
	\begin{align}\label{eq:osc-claim}
	\cO( r') \le \max \Bigl\{\frac{2-\theta}{2}\cO(R),\frac{2 \tilde K c(f,u)}{h(R)}\Bigr\}  \,,
	\end{align}
where, here and in the following, 
	$$
	M_*(r):= \underset{B_r(x_0)}{\essinf} \: u ,\quad M^*(r):= \underset{B_r(x_0)}{\esssup}\: u, \quad \text{and}\quad \cO(r):= \frac{M^*(r) - M_*(r)}{2}
	$$
	for $r \in (0,R_*]$. Clearly, $\cO(r)$ is a nondecreasing function in $r$ which obviously depends on $x_0$, but we do not indicate this dependence in our notation. So let $x_0 \in A$ and $R \in (0,R_*]$ be given, and let us assume
        without loss that $x_0 = 0$. If $\cO(R) \le \frac{2 \tilde K c(f,u)}{h(R)},$ we may pick $r' \in (0,R)$ arbitrarily, and (\ref{eq:osc-claim}) holds. So let us consider the remaining case where 
        \begin{equation}
          \label{eq:oscillation-second-alternative}
         \cO(R)> \frac{2 \tilde K c(f,u)}{h(R)}. 
       \end{equation}
We define the function $v_R \in V_{\loc}(B_R|\R^N) \cap L^\infty(\R^N)$,
	$$
	v_R(x)=\frac{2}{\cO(R)}\Bigl(\tilde u(x) - \frac{M_*(R)+M^*(R)}{2}\Bigr),
	$$
and we note that 
	\begin{align}
	\label{eq:def-kappa-infty-new}
          \|v_R\|_{L^\infty(\R^N)} \le \frac{4}{\cO(R)} \|\tilde u\|_{L^\infty(\R^N)}= \frac{4}{\cO(R)} \|u\|_{L^\infty(B)} \le \frac{4 h(R) \|u\|_{L^\infty(B)}}{2 \tilde K c(f,u)} \le \frac{2}{\tilde K} h(R)
	\end{align}
	and 
	$$
	\underset{B_R}{\essinf}  \: v_R \geq - 1, \qquad  \underset{B_R}{\esssup}\: v_R \leq 1.
	$$
	Since $\op  v_R = \frac{2 \tilde f}{\cO(R)}$ in $B_*$, we also have that 
	$$
	|\op v_R| \le \frac{2 \|\tilde f\|_{L^\infty(B_*)}}{\cO(R)}
 \le \frac{2 \tilde K c(f,u)}{\cO(R)}
        \le h(R) \quad \text{in $B_R(0) \subset B_*$} 
	$$
by (\ref{eq:oscillation-second-alternative}). Next, we choose $r \in (0,R)$ as in \cref{thm:growth-lemma} for $v_\infty= \frac{2}{\tilde K} h(R)$. We may then apply this theorem to $v_R$ or $-v_R$ depending on whether 
	\begin{align}\label{eq:key-estimate-app-1-new}
	\mu_{j}\Bigl((B_{R} \setminus B_{r}) \cap \{v \le 0\}\Bigr) \ge \frac{1}{2} 
	\mu_{j}(B_{R} \setminus B_{r})
	\end{align}
	or 
	\begin{align}\label{eq:key-estimate-app-2-new}
	\mu_{j}\Bigl((B_{R} \setminus B_{r}) \cap \{v \ge 0\}\Bigr) \ge \frac{1}{2} 
	\mu_{j}(B_{R} \setminus B_{r}).
	\end{align}
	\cref{thm:growth-lemma} then yields 
	$$
	\underset{B_{\eta r}}{\esssup}\:  v_R \le 1-\theta \,,
	$$
	or 
	$$
	\underset{B_{\eta r}}{\essinf}\: v_R \ge -(1-\theta) \,,
	$$
	where we recall that $\eta, \theta\in(0,1)$ are independent of $R$ and $v_R$. This implies 
	$$
	\underset{B_{\eta r}}{\esssup}\: u \le  \frac{M_*(R)+M^*(R)}{2} + \frac{1-\theta}{2} \cO(R)
	$$
	or 
	$$
	\underset{B_{\eta r}}{\essinf} \: u \ge \frac{M_*(R)+M^*(R)}{2} - \frac{1-\theta}{2} \cO(R)
	$$
	In both cases, we conclude
	$$
	\cO(r') \le \frac{2-\theta}{2}\cO(R) \,
	$$
	with $r'=\eta r$, as claimed in \eqref{eq:osc-claim}.\\

        Next, we define a sequence $(r_n)_n$ in $(0,R_*]$ inductively as follows. We set $r_0 := R_*$. If $r_n$ is already defined for some $n \ge 0$, we then let $r_{n+1}= r'$, where $r'$ is chosen such that \eqref{eq:osc-claim} holds with $R =r_n$. Applying \eqref{eq:osc-claim} inductively, we find that 
        $$
	\cO( r_n) \le \max \{\kappa^n \cO(R_*),  c(f,u) g_n\} \quad \text{with}\quad g_n:=  2 \tilde K \max_{i=1,\dots,n}\frac{\kappa^{i-1}}{h(r_{n-i})}\quad \text{and}\quad \kappa = \frac{2-\theta}{2}.
        $$
        It is easy to see that $g_n \to 0$ as $n \to \infty$, and $g_n$ is independent of $f$ and $u$. Moreover, since $\cO(R_*) \le 2 \|u\|_{L^\infty(B_*)} \le 2 c(f,u)$, we conclude that with $\tilde g_n:= \max\{\kappa^n,g_n\}$ we have         \begin{equation}
          \label{eq:tild-g-n-ineq}
        \cO( r_n) \le c(f,u) \tilde g_n \qquad \text{for all $n \in \N$,}
        \end{equation}
 and $\tilde g_n \to 0$ as $n \to \infty$, whereas $\tilde g_n$ is also independent of $f$ and $u$. We note that this holds independently of the choice of $x_0 \in A$. We may also assume that $\tilde g_n$ is non-increasing in $n \in \N$, otherwise we replace $\tilde g_n$ by $\max \{\tilde g_m \::\: m \ge n\}$ in the following and (\ref{eq:tild-g-n-ineq}) still holds. 

 We now let $(x_n)_n$ be a dense sequence in $A$ (which exists since $A$ is a separable metric space). Then the set
        $$
        I_{n,m}:=\{x\in A\cap B_{r_n}(x_m)\;:\; u(x)<\underset{B_{r_n}(x_m)}{\essinf}\: u \;\text{ or }\; u(x)>\underset{B_{r_n}(x_m)}{\esssup}\: u\}
$$
has measure zero for every $n$, $m$, and the same holds for
$$
I:=\bigcup_{n,m}I_{n,m}.
$$
In the following, we also let
$$
n(t)= \max \{n \in \N\::\: t \le r_n\} \qquad \text{for $t \in (0,R_*]$.}
$$
For $x,y \in A \setminus I$ with $|x-y|\le r_1$, we then have
$$
n(|x-y|)\ge 1\qquad \text{and}\qquad |x-y| \le r_{n(|x-y|)} <r_{n(|x-y|)-1},
$$
which implies that there exists $m \in \N$ with
$$
|x_m -x|< r_{n(|x-y|)-1}-|x-y| \qquad \text{and therefore}\qquad x,y \in B_{r_{n(|x-y|)-1}}(x_m).
$$
Consequently,
$$
|u(x)-u(y)| \le \cO(r_{n(|x-y|)-1})\le c(f,u) \tilde g_{n(|x-y|)-1}
$$
by (\ref{eq:tild-g-n-ineq}). Hence, defining $\tilde \omega: [0,\infty) \to [0,\infty)$ by
$$
\tilde \omega(t):=
\left\{
\begin{aligned}
  &0, &&\qquad \text{$t=0$;}\\
  &\tilde g_{r_{n(t)-1}}, &&\qquad \text{$t \in (0,r_1]$};\\
  &\max\{2, g_{r_0}\} &&\qquad \text{$t> r_1$};
\end{aligned}
\right.
$$
we see that $\tilde \omega$ is nondecreasing, piecewise constant with $\lim \limits_{t \to 0^+} \tilde \omega(t)=0 = \tilde \omega(0)$, and 
\begin{equation}
  \label{eq:tilde-omega-est}
|u(x)-u(y)| \le \tilde \omega(|x-y|)c(f,u) \qquad \text{for all $x,y \in A \setminus I$.}
\end{equation}
By standard linear interpolation, we may finally replace $\tilde \omega$ by a nondecreasing continuous function $\omega:[0,\infty) \to [0,\infty)$ with $\omega(0)=0$ and the property that (\ref{eq:tilde-omega-est}) holds with $\omega$ in place of $\tilde \omega$. Hence there exists representative for $u|_A$ such that
(\ref{new-continuity-main-theorem-estimate:section}) holds with this choice of $\omega$. In particular, this representative is continuous and therefore unique on the set $A$.  A continuous representative of $u$ on all of $\Omega$ can now be found by considering a sequence of open sets
$$
A_1 \Subset A_2 \Subset A_3 \Subset \dots \quad \text{with}\quad \Omega = \bigcup_{n \in \N}A_n
$$
and letting $\bar u$ be defined on $A_n$ as the unique continuous representative of $u\big|_{A_n}$. By uniqueness of continuous representatives, (\ref{new-continuity-main-theorem-estimate:section}) then still holds for $\bar u$ and any choice of subsets $A \Subset B_* \Subset B \Subset \Omega$.
\end{proof}

\appendix

%\bibliographystyle{alpha}
%\bibliography{nonloc-cont_final}

\begin{thebibliography}{DROSV22}


\bibitem[AV19]{AV19}
Nicola Abatangelo and Enrico Valdinoci.
\newblock Getting acquainted with the fractional {Laplacian}.
\newblock {\it Contemporary research in elliptic PDEs and related topics. Contributions based on lectures given during the INdAM intensive period, Bari, Italy, April 10 -- June 9, 2017.}, Cham: Springer, 2019.

\bibitem[AFP00]{AFP00}
Luigi Ambrosio, Nicola Fusco, and Diego Pallara.
\newblock {\it Functions of bounded variation and free discontinuity problems}.
\newblock Oxford Mathematical Monographs. The Clarendon Press, Oxford
  University Press, New York, 2000.

\bibitem[BK05a]{BaKa05b}
Richard~F. Bass and Moritz Kassmann.
\newblock Harnack inequalities for non-local operators of variable order.
\newblock {\it Trans. Amer. Math. Soc.}, 357(2):837--850, 2005.

\bibitem[BK05b]{BaKa05a}
Richard~F. Bass and Moritz Kassmann.
\newblock H\"{o}lder continuity of harmonic functions with respect to operators
  of variable order.
\newblock {\it Comm. Partial Differential Equations}, 30(7-9):1249--1259, 2005.

\bibitem[BL02]{BaLe02}
Richard~F. Bass and David~A. Levin.
\newblock Harnack inequalities for jump processes.
\newblock {\it Potential Anal.}, 17(4):375--388, 2002.

\bibitem[CdP18]{CodP18}
Ernesto Correa and Arturo de~Pablo.
\newblock Nonlocal operators of order near zero.
\newblock {\it J. Math. Anal. Appl.}, 461(1):837--867, 2018.

\bibitem[CLD12]{ChDa12}
H\'{e}ctor Chang~Lara and Gonzalo D\'{a}vila.
\newblock Regularity for solutions of nonlocal, nonsymmetric equations.
\newblock {\it Ann. Inst. H. Poincar\'{e} C Anal. Non Lin\'{e}aire},
  29(6):833--859, 2012.

\bibitem[CLS22]{CS22}
H{\'e}ctor~A. Chang-Lara and Alberto Salda{\~ n}a.
\newblock Classical solutions to integral equations with zero order kernels.
\newblock {\it arXiv e-prints}, page arXiv:2208.12841, August 2022.

\bibitem[Coz17]{Coz17}
Matteo Cozzi.
\newblock Regularity results and {H}arnack inequalities for minimizers and
  solutions of nonlocal problems: a unified approach via fractional {D}e
  {G}iorgi classes.
\newblock {\it J. Funct. Anal.}, 272(11):4762--4837, 2017.

\bibitem[CS03]{ChSo03}
Zhen-Qing Chen and Renming Song.
\newblock Hardy inequality for censored stable processes.
\newblock {\it Tohoku Math. J. (2)}, 55(3):439--450, 2003.

\bibitem[CS09]{CaSi09}
Luis Caffarelli and Luis Silvestre.
\newblock Regularity theory for fully nonlinear integro-differential equations.
\newblock {\it Comm. Pure Appl. Math.}, 62(5):597--638, 2009.

\bibitem[CW19]{ChWe19}
Huyuan Chen and Tobias Weth.
\newblock The {D}irichlet problem for the logarithmic {L}aplacian.
\newblock {\it Comm. Partial Differential Equations}, 44(11):1100--1139, 2019.

\bibitem[DCKP14]{DKP14}
Agnese Di~Castro, Tuomo Kuusi, and Giampiero Palatucci.
\newblock Nonlocal {H}arnack inequalities.
\newblock {\it J. Funct. Anal.}, 267(6):1807--1836, 2014.

\bibitem[DCKP16]{DKP16}
Agnese Di~Castro, Tuomo Kuusi, and Giampiero Palatucci.
\newblock Local behavior of fractional {$p$}-minimizers.
\newblock {\it Ann. Inst. H. Poincar\'{e} Anal. Non Lin\'{e}aire},
  33(5):1279--1299, 2016.

\bibitem[DK20]{DyKa20}
Bart{\l}omiej Dyda and Moritz Kassmann.
\newblock Regularity estimates for elliptic nonlocal operators.
\newblock {\it Anal. PDE}, 13(2):317--370, 2020.

\bibitem[DROSV22]{DRSV22}
Serena Dipierro, Xavier Ros-Oton, Joaquim Serra, and Enrico Valdinoci.
\newblock Non-symmetric stable operators: regularity theory and integration by
  parts.
\newblock {\it Adv. Math.}, 401:Paper No. 108321, 100, 2022.

\bibitem[FJ23]{FeJa22}
Pierre~Aime Feulefack and Sven Jarohs.
\newblock Nonlocal operators of small order.
\newblock {\it Ann. Mat. Pura Appl. (4)}, 202(4):1501--1529, 2023.

\bibitem[FJW22]{FJW22}
Pierre~Aime Feulefack, Sven Jarohs, and Tobias Weth.
\newblock Small order asymptotics of the {D}irichlet eigenvalue problem for the
  fractional {L}aplacian.
\newblock {\it J. Fourier Anal. Appl.}, 28(2):Paper No. 18, 44, 2022.

\bibitem[FK22]{FoKa22}
Guy {Foghem} and Moritz {Kassmann}.
\newblock {A general framework for nonlocal Neumann problems}.
\newblock {\it arXiv e-prints}, page arXiv:2204.06793, April 2022.
\newblock to appear in Comm. Math. Sci.

\bibitem[FKV15]{FKV15}
Matthieu Felsinger, Moritz Kassmann, and Paul Voigt.
\newblock The {D}irichlet problem for nonlocal operators.
\newblock {\it Math. Z.}, 279(3-4):779--809, 2015.

\bibitem[F20]{F20}
G. Foghem. 
\newblock {$L^2$-theory for nonlocal operators on domains.
\newblock Phd thesis, Universit\"{a}t Bielefeld}, 2020.
\newblock \url{https://doi.org/10.4119/unibi/2946033}.

\bibitem[Gar19]{G19}
Nicola Garofalo.
\newblock Fractional thoughts.
\newblock {\it New developments in the analysis of nonlocal operators. AMS special session, University of St. Thomas, Minneapolis, MN, USA, October 28--30, 2016}, Providence, RI: American Mathematical Society (AMS), 2019.

\bibitem[JW19]{JaWe19}
Sven Jarohs and Tobias Weth.
\newblock On the strong maximum principle for nonlocal operators.
\newblock {\it Math. Z.}, 293(1-2):81--111, 2019.

\bibitem[Kas09]{Kas09}
Moritz Kassmann.
\newblock A priori estimates for integro-differential operators with measurable
  kernels.
\newblock {\it Calc. Var. Partial Differential Equations}, 34(1):1--21, 2009.

\bibitem[KM13]{KaMi13}
Moritz Kassmann and Ante Mimica.
\newblock Analysis of jump processes with nondegenerate jumping kernels.
\newblock {\it Stochastic Process. Appl.}, 123(2):629--650, 2013.

\bibitem[KM17]{KaMi17}
Moritz Kassmann and Ante Mimica.
\newblock Intrinsic scaling properties for nonlocal operators.
\newblock {\it J. Eur. Math. Soc. (JEMS)}, 19(4):983--1011, 2017.

\bibitem[KW22]{KaWe22a}
Moritz {Kassmann} and Marvin {Weidner}.
\newblock {Nonlocal operators related to nonsymmetric forms I: H{\"o}lder
  estimates}.
\newblock {\it arXiv e-prints}, page arXiv:2203.07418, March 2022.

\bibitem[Sil06]{Sil06}
Luis Silvestre.
\newblock H\"{o}lder estimates for solutions of integro-differential equations
  like the fractional {L}aplace.
\newblock {\it Indiana Univ. Math. J.}, 55(3):1155--1174, 2006.

\bibitem[SS16]{ScSi16}
Russell~W. Schwab and Luis Silvestre.
\newblock Regularity for parabolic integro-differential equations with very
  irregular kernels.
\newblock {\it Anal. PDE}, 9(3):727--772, 2016.

\bibitem[SV04]{SoVo04}
Renming Song and Zoran Vondra\v{c}ek.
\newblock Harnack inequality for some classes of {M}arkov processes.
\newblock {\it Math. Z.}, 246(1-2):177--202, 2004.

\end{thebibliography}
%

\end{document}